\documentclass[a4paper]{amsart}
\usepackage{latexsym,bm,stmaryrd}
\usepackage{amsmath,amsthm,amsfonts,amssymb,mathrsfs,pb-diagram}

\usepackage{microtype}
\usepackage{mathtools}
\usepackage{mathbbol,wasysym}

\let\<=\langle
\let\>=\rangle
\def\({\big(}
\def\){\big)}

\def\Z{\mathbb{Z}}
\def\Q{\mathbb{Q}}

\def\N{\mathbb{N}}

\def\O{\mathcal{O}}

\def\Lam{\Lambda}
\def\Sym{\mathfrak{S}}

\newcommand\RR{\mathscr{R}}

\newcommand\HH{\mathscr{H}}

\def\bn[#1,#2]{\begin{bmatrix}#1\\#2\end{bmatrix}}

\def\wnu{\widetilde{\nu}}
\def\fb{\mathbf{b}}
\def\fg{\mathfrak{g}}

\newcommand\bi{\mathbf{i}}
\newcommand\bj{\mathbf{j}}

\DeclareMathOperator\Hom{Hom}

\DeclareMathOperator\df{def}

\DeclareMathOperator\Proj{Proj}

\DeclareMathOperator\Mod{Mod}

\title[]
{Graded dimensions and monomial bases for the cyclotomic quiver Hecke algebras}
\subjclass[2010]{20C08, 16G99, 06B15}
\keywords{Cyclotomic quiver Hecke algebras, categorification}
\author{Jun Hu}\address{Key Laboratory of Algebraic Lie Theory and Analysis of Ministry of Education\\
School of Mathematics and Statistics\\
  Beijing Institute of Technology\\
  Beijing, 100081, P.R. China}
\email{junhu404@bit.edu.cn}
\author{Lei Shi}\address{School of Mathematics and Statistics\\
  Beijing Institute of Technology\\
  Beijing, 100081, P.R. China}
\email{3120195738@bit.edu.cn}

\numberwithin{equation}{section}
\newtheorem{prop}[equation]{Proposition}
\newtheorem{thm}[equation]{Theorem}

\newtheorem{cor}[equation]{Corollary}

\newtheorem{lem}[equation]{Lemma}
\newtheorem{examp}[equation]{Example}

\theoremstyle{definition}
\newtheorem{dfn}[equation]{Definition}
\theoremstyle{remark}
\newtheorem{rem}[equation]{Remark}

\begin{document}

\begin{abstract} In this paper we give a closed formula for the graded dimension of the cyclotomic quiver Hecke algebra $\RR^\Lam(\beta)$ associated to an {\it arbitrary} symmetrizable Cartan matrix $A=(a_{ij})_{i,j}\in I$, where $\Lam\in P^+$ and $\beta\in Q_n^+$. As applications, we obtain some {\it necessary and sufficient conditions} for the KLR idempotent $e(\nu)$ (for any $\nu\in I^\beta$) to be nonzero in the cyclotomic quiver Hecke algebra
$\RR^\Lam(\beta)$. We prove several level reduction results which decompose $\dim\RR^\Lam(\beta)$ into a sum of some products of $\dim\RR^{\Lam^i}(\beta_i)$ with $\Lam=\sum_i\Lam^i$ and $\beta=\sum_{i}\beta_i$, where $\Lam^i\in P^+, \beta^i\in Q^+$ for each $i$. Finally, we construct some explicit monomial bases for the subspaces $e(\wnu)\RR^\Lam(\beta)e(\mu)$ and $e(\mu)\RR^\Lam(\beta)e(\wnu)$ of $\RR^\Lam(\beta)$, where $\mu\in I^\beta$ is {\it arbitrary} and $\wnu\in I^\beta$ is a certain specific $n$-tuple defined in (5.1).
\end{abstract}


\maketitle
\setcounter{tocdepth}{1}
\tableofcontents

\section{Introduction}

The idea of ``categorification'' originates from the work \cite{Cr} and \cite{CF} in their study of quantum gravity and four-dimensional topological quantum field theory. Many important knot invariants (e.g., Jones polynomials \cite{Kh00}) can be categorified and categorification has now become an intensively studied subject in several mathematical and physical areas. For each symmetrizable Cartan matrix $A=(a_{ij})_{i,j\in I}$, Khovanov-Lauda \cite{KL1,KL2} and Rouquier \cite{Rou1, Rou2} introduced a remarkable family of $\Z$-graded algebras $\RR=\bigoplus_{\beta\in Q_n^+}\RR(\beta)$, called quiver Hecke (or KLR) algebras, and used them to categorify the negative parts $U_q(\mathfrak{g})^{-}$ of the quantum group $U_q(\mathfrak{g})$ associated to $A$. For each dominant integral weight $\Lam\in P^+$, they also defined their graded quotients, $\RR^\Lam=\bigoplus_{\beta\in Q_n^+}\RR^\Lam(\beta)$, called cyclotomic quiver Hecke (or cyclotomic KLR) algebras, and conjectured that they can be used to categorify the integrable highest weight module $V(\Lam)$ over the quantum group $U_q(\mathfrak{g})$. The conjecture was proved by Kang and Kashiwara in \cite{KK}. When the ground field $K$ has characteristic $0$ and $A$ is symmetric, Rouquier \cite{Rou2} and Varagnolo-Vasserot \cite{VV} have proved that the categorification sends the indecomposable projective modules over the quiver Hecke algebra $\RR$ to the canonical bases of $U_q(\mathfrak{g})^{-}$.

In many aspects the structure and representation theory of the quiver Hecke algebra $\RR(\beta)$ resemble that of the affine Hecke algebra (\cite{G},\cite{Klesh:book}). For example, Rouquier \cite{Rou2} presented an isomorphism between some localized forms of the quiver Hecke algebra of type $A$ and of the affine Hecke algebra of type $A$. For general type, the standard (monomial) bases of $\RR(\beta)$ and faithful polynomial representations over $\RR(\beta)$ are constructed in \cite{KL1} and \cite{Rou2}, where it is also proved that the centers of the quiver Hecke algebras $\RR(\beta)$ consist of all symmetric elements in its KLR generators $x_1,\cdots,x_n$ and $e(\nu), \nu\in I^\beta$, which is similar to the well-known Bernstein's theorem on the centers of affine Hecke algebras. The representation theory of $\RR(\beta)$ has been well-studied in the literature, see e.g., \cite{BKOP}, \cite{BKM}, \cite{K14}, \cite{KLo}, \cite{KR10,KR11} and the references therein. In contrast to these results, little is known about the structure and representation theory of the cyclotomic quiver Hecke algebra $\RR^\Lam(\beta)$ except the cases of type $A$, type $C$ and some special $\Lam$ (\cite{AP14, AP16c, APS, BK:GradedKL, BKgraded, HM}).

One of the main obstacles for the understanding of $\RR^\Lam(\beta)$ is the lack of an explicit basis or even a closed formula for its graded dimension.
In the case of types $A_\ell^{(1)}$ and $A_\infty$, Brundan and Kleshchev gave in \cite[Theorem 4.20]{BKgraded} a graded dimension formula for $\RR^\Lam(\beta)$ using the enumerative combinatoric of standard tableaux for multi-partitions, and they constructed in \cite{BK:GradedKL} an explicit $K$-algebra isomorphism between $\RR^\Lam(\beta)$ and the block algebra labelled by $\beta$ of the cyclotomic Hecke algebra of type $G(\ell,1,n)$ when $\Lam$ has level $\ell$. In this type $A$ case, Ariki's celebrated categorification work \cite{Ariki:can} was upgraded in \cite{BKgraded} to the $\Z$-graded setting via quiver Hecke algebras.
Based on \cite{BK:GradedKL}, the first author of this paper and Mathas have constructed a graded cellular basis for the cyclotomic quiver Hecke algebra $\RR^\Lam(\beta)$ in these cases. In the case of types $C_\ell^{(1)}$ and $C_\infty$, Ariki, Park and Speyer obtained in \cite{AP16c} and \cite[Theorem 2.5]{APS} a graded dimension formula for $\RR^\Lam(\beta)$ in a similar way as \cite[Theorem 4.20]{BKgraded}. In the case of types $A_{2\ell}^{(2)}$ and $D_{\ell+1}^{(2)}$, S. Oh and E. Park have also obtained in \cite[Theorem 6.3]{OP} (see also \cite{AP14}) a graded dimension formula for the finite quiver Hecke algebra $\RR^{\Lam_0}(\beta)$ using the enumerative combinatoric of standard tableaux for proper Young walls. Both \cite[Theorem 2.5]{APS}, \cite[Theorem 4.20]{BKgraded} and \cite[Theorem 6.3]{OP} rely on the realizations of the Fock space representations of the quantum groups of affine types. Park has given in \cite[Theorem 2.9]{P} an explicit basis of the cyclotomic quiver Hecke algebra corresponding to a minuscule representation of finite type. Recently, Mathas and Tubbenhauer have constructed graded cellular bases for some special affine types, see \cite{MT1}, \cite{MT2}.

In this paper we give a simple and closed formula for the graded dimension of the cyclotomic quiver Hecke algebra $\RR^\Lam(\beta)$ associated to an {\it arbitrary} symmetrizable Cartan matrix $A=(a_{ij})_{i,j\in I}$, where $\Lam\in P^+$ and $\beta\in Q_n^+$. Our new dimension formula is a simple function in terms of the dominant integral weight $\Lam$, simple roots and certain Weyl group elements, and involves no enumerative combinatoric of standard tableaux or Young walls. The following theorem is the first main result of this paper.

\begin{thm}\label{mainthmA}  Let $\beta\in Q_n^+$ and $\nu=(\nu_1,\cdots,\nu_n),\nu'=(\nu'_1,\cdots,\nu'_n)\in I^\beta$. Then $$
\dim_q e(\nu)\RR^\Lam(\beta)e(\nu')=\sum_{\substack{w\in\Sym(\nu,\nu')}}\prod_{t=1}^{n}\Bigl([N^{\Lam}(w,\nu,t)]_{\nu_t}
q_{\nu_t}^{N^{\Lam}(1,\nu,t)-1}\Bigr).
$$
where $e(\nu), e(\nu')$ are the KLR idempotents labelled by $\nu, \nu'$ respectively in the definition of $\RR^\Lam(\beta)$ (Definition \ref{D:QuiverRelations}), $N^{\Lam}(w,\nu,t)$ is an integer given in Definition \ref{keydfn1}, $\Sym(\nu,\nu'):=\{w\in\Sym_n|w\nu=\nu'\}$, $q_{\nu_t}:=q^{d_{\nu_t}}$, $[m]_{\nu_t}$ is the quantum integer introduced in (\ref{quantum1}) and (\ref{quantum2}).
\end{thm}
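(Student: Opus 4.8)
The plan is to pin down $e(\nu)\RR^\Lam(\beta)e(\nu')$ as a graded vector space by exhibiting an explicit monomial basis of the predicted graded size, obtained by cutting down the standard monomial basis of the affine KLR algebra with the cyclotomic relations. Recall from \cite{KL1, Rou2} that, after fixing a reduced expression for every $w\in\Sym_n$, the affine quiver Hecke algebra $\RR(\beta)$ has a $K$-basis of monomials $x_1^{a_1}\cdots x_n^{a_n}\psi_w e(\mu)$ with $\mu\in I^\beta$, $w\in\Sym_n$ and $a_1,\dots,a_n\in\N$; consequently the idempotent truncation $e(\nu)\RR(\beta)e(\nu')$ has $K$-basis the monomials $e(\nu)x_1^{a_1}\cdots x_n^{a_n}\psi_w e(\nu')$ with $w\in\Sym(\nu,\nu')$ and $a_1,\dots,a_n\in\N$. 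Passing to the quotient $\RR^\Lam(\beta)$ of $\RR(\beta)$ by the cyclotomic ideal generated by the elements $x_1^{\langle\Lam,\alpha^\vee_{\mu_1}\rangle}e(\mu)$, these monomials still span $e(\nu)\RR^\Lam(\beta)e(\nu')$ there. The crux is to show that any such monomial with $a_t\ge N^{\Lam}(w,\nu,t)$ for some $t$ lies in the $K$-span of the ``reduced'' monomials, those with $0\le a_t<N^{\Lam}(w,\nu,t)$ for all $t$, modulo monomials strictly smaller in a filtration of $\RR^\Lam(\beta)$ ordered first by total degree in $x_1,\dots,x_n$, then by $\ell(w)$, then lexicographically in $(a_1,\dots,a_n)$; this produces a candidate spanning set $B(\nu,\nu')$.

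To carry out this reduction I would induct on $t$ and, within that, on $\ell(w)$. For $t=1$ the cyclotomic relation applies directly to $x_1^{a_1}e(\nu)$. For $t>1$ one uses the KLR relations $x_r\psi_r=\psi_r x_{r+1}-(\text{idempotent})$, $\psi_r x_{r+1}=x_r\psi_r-(\text{idempotent})$ and $x_s\psi_r=\psi_r x_s$ ($s\ne r,r+1$) to slide a high power of $x_t$ through the $\psi$-part of the monomial until the cyclotomic relation on $x_1$ can be invoked, with each correction term that appears having strictly smaller $\ell(w)$ or strictly smaller $x$-degree and hence already controlled by the induction. Tracking how the exponent of $x_t$ is shifted each time it crosses a braid generator turns the obvious bound at the first strand into precisely the signed count of simple roots and inversions of $w$ that defines $N^{\Lam}(w,\nu,t)$ in Definition~\ref{keydfn1}. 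I expect this bookkeeping --- showing that the combinatorial integer $N^{\Lam}(w,\nu,t)$ is exactly the algebraic nilpotency bound produced by the straightening --- to be the main obstacle; it is convenient to isolate beforehand, as a lemma, the exact nilpotency degree of the single variable $x_n$ in $e(\mu)\RR^\Lam(\beta)$, generalising the bounds implicit in \cite{KK}, and then feed it into an induction on $n$.

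Once $B(\nu,\nu')$ is known to span, its graded size is read off from $\deg\bigl(e(\nu)x_1^{a_1}\cdots x_n^{a_n}\psi_w e(\nu')\bigr)=\deg(\psi_w e(\nu'))+\sum_{t}2a_t d_{\nu_t}$, and one checks it equals the right-hand side of the theorem, the uniform factor $q_{\nu_t}^{N^{\Lam}(1,\nu,t)-1}$ absorbing $\deg(\psi_w e(\nu'))$ and the normalisation of the quantum integers $[N^{\Lam}(w,\nu,t)]_{\nu_t}$ of (\ref{quantum1})--(\ref{quantum2}). It remains to prove that $B(\nu,\nu')$ is $K$-linearly independent --- equivalently, that the straightening never over-reduces, e.g. $x_t^{N^{\Lam}(w,\nu,t)-1}\psi_w e(\nu')\ne 0$ and no further relations hold. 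Here I would either (i) construct a faithful graded ``polynomial'' representation of $\RR^\Lam(\beta)$ on which $\bigsqcup_{\nu,\nu'}B(\nu,\nu')$ acts by manifestly independent operators, in the spirit of the seminormal/Specht constructions available for type $A$ in \cite{BK:GradedKL}; or --- more robustly for an arbitrary symmetrizable $A$ --- (ii) invoke the Kang--Kashiwara categorification theorem \cite{KK}: the short exact sequences underlying the categorified $\mathfrak{sl}_2$-relations give, by induction on $n$ (peeling off the last strand), a recursion for the graded dimensions of $\RR^\Lam(\beta)$, and one verifies that the sum of the proposed formula over all $\nu,\nu'\in I^\beta$ satisfies the same recursion with base case $\RR^\Lam(0)=K$; since $B(\nu,\nu')$ spans $e(\nu)\RR^\Lam(\beta)e(\nu')$ for every pair, equality of total graded dimensions forces each $B(\nu,\nu')$ to be a basis. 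Combining the spanning statement of the first two steps with this dimension count then yields the asserted formula.
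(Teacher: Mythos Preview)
Your strategy of exhibiting a monomial basis $B(\nu,\nu')=\{e(\nu)x_1^{a_1}\cdots x_n^{a_n}\psi_w e(\nu')\mid 0\le a_t<N^\Lam(w,\nu,t)\}$ cannot work, and the paper itself explains why immediately after Corollary~\ref{maincor1}. The integers $N^\Lam(w,\nu,t)$ are often \emph{negative}: the right-hand side of the theorem is a signed sum in which individual summands $\prod_t[N^\Lam(w,\nu,t)]_{\nu_t}$ may contribute with the wrong sign, and only after cancellation does one obtain a polynomial in $q$ with non-negative coefficients. So for many $w$ the set of ``reduced'' monomials you propose is empty while the corresponding summand is a nonzero negative Laurent polynomial---there is nothing to count. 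Even in cases where every $N^\Lam(w,\nu,t)>0$ the paper gives an explicit counterexample (the cyclotomic nilHecke algebra $\mathscr{H}^{0}_{5,2}$) showing that the monomials $\psi_w x_1^{a_1}\cdots x_n^{a_n}e(\nu)$ with $0\le a_t<N^\Lam(w,\nu,t)$ are linearly \emph{dependent}. The reason the monomial-basis approach succeeds for the special idempotent $e(\wnu)$ in Section~5 is precisely that positivity and independence can be arranged there; the paper emphasises this is not the general picture.

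Your fallback option (ii) is essentially the paper's actual proof, but you have the logic inverted. The paper does not first produce a spanning set and then invoke Kang--Kashiwara to verify independence; instead it uses Proposition~\ref{op1} (due to Oh--Park, a consequence of \cite{KK}) to identify $\dim_q e(\nu)\RR^\Lam(\beta)e(\nu')$ with the coefficient of $v_\Lam$ in $e_{\nu_1}\cdots e_{\nu_n}f_{\nu'_n}\cdots f_{\nu'_1}v_\Lam$ up to a power of $q$, and then computes this coefficient directly in $V(\Lam)$ by repeatedly commuting the rightmost $e$ past the $f$'s via the relation $e_if_j-f_je_i=\delta_{ij}[K_i]$. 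Each commutation produces a quantum integer $[N^\Lam(w,\nu,t)]_{\nu_t}$, and the induction on $n$ unwinds to the stated closed formula. No spanning set, no straightening, and no independence argument are needed---the whole proof lives in the quantum group. I would recommend abandoning the monomial-basis route for this theorem and carrying out the $V(\Lam)$ computation directly.
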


Since $\{e(\nu)|\nu\in I^\beta\}$ are pairwise orthogonal idempotents whose sum is the identity, we see that $\RR^\Lam(\beta)=\oplus_{\nu,\nu'\in I^\beta}e(\nu)\RR^\Lam(\beta)e(\nu')$ and thus $$
\dim_q \RR^\Lam(\beta)=\sum_{\nu,\nu'\in I^\beta}\dim_q e(\nu)\RR^\Lam(\beta)e(\nu').
$$
The proof of Theorem \ref{mainthmA} relies crucially on Oh-Park's work (\cite[Proposition 3.3]{OP}) which is deduced from Kang-Kashiwara's categorification Theorem. Specializing $q$ to $1$, we get that \begin{equation}\label{dimFormula}
\dim e(\nu)\RR^{\Lambda}(\beta)e(\nu')=\sum\limits_{w\in\Sym(\nu,\nu')}\prod\limits_{t=1}^{n}N^\Lam(w,\nu,t).
\end{equation}
 A priori, those integers $N^{\Lam}(w,\nu,t)$ appeared in the above equality could be negative. Since $\dim e(\nu)\RR^{\Lambda}(\beta)e(\nu')\geq 0$, the summation in the right-hand side of the above equality must be always non-negative. This is surprising as we see no reason why this should be true from only the right-hand side formula itself. Our formula reveals the significance of new numeric invariants, which appear as coefficients, suggesting that their full role is yet to be fully explored. A second simplified (or divided power) version of the dimension formula for $e(\nu)\RR^{\Lambda}(\beta)e(\nu)$ is also obtained in Theorem \ref{mainthm1b}.


Our dimension formula for $\RR^{\Lambda}(\beta)$ depends only on the root system associated to $A$ and the dominant weight $\Lam$, but not on the chosen ground field $K$ and the polynomials $Q_{ij}(u,v)$. This immediately implies that if each $Q_{ij}(u,v)$ is defined over $\Z$ then $\RR^{\Lambda}(\beta)_{\Z}$ is free over $\Z$, and hence $\O\otimes_{\Z}\RR^{\Lambda}(\beta)_{\Z}\cong\RR^{\Lambda}(\beta)_{\O}$ for any commutative ground ring $\O$, which recovers a result in \cite[Proposition 2.4]{APS}, where we use $\RR^{\Lambda}(\beta)_{\O}$ to emphasis the ground ring $\O$ over which the quiver Hecke algebra is defined.

The above dimension formula is new even in the special cases of (affine) type $A$ or (affine) type $C$. By the main results of \cite{BK:GradedKL}, the block algebra labelled by $\beta\in Q_n^+$ of the symmetric group $\Sym_n$ in characteristic $e>0$ and of the Iwahori-Hecke algebra at a primitive $e$th root of unity can be identified with the corresponding cyclotomic quiver Hecke algebra $\RR^{\Lam_0}(\beta)$. Thus Theorem \ref{mainthmA} and (\ref{dimFormula}) give some closed formulae for the dimensions of these block algebras, which is new to the best of our knowledge.
It would be very interesting to relate those integers $N^{\Lam}(w,\nu,t)$ to the Fock space realization of affine quantum groups for general types.

It is well-known that any KLR idempotent $e(\nu)$ in the quiver Hecke algebra $\RR(\beta)$ is nonzero. In contrast, this is in general not the case for the KLR idempotent $e(\nu)$ in the cyclotomic quiver Hecke algebra $\RR^\Lam(\beta)$. In fact, one of the unsolved open problems in the structure and representation theory of $\RR^\Lam(\beta)$ is to determine when the KLR idempotent $e(\nu)$ is nonzero in $\RR^\Lam(\beta)$. As a first application of our new dimension formula Theorem \ref{mainthmA} and (\ref{dimFormula}), we obtain the following second main result of this paper, which gives a simple criterion and thus completely solves the above problem for {\it arbitrary} symmetrizable Cartan matrix.

\begin{thm}\label{mainthmB} Let $\Lam\in P^+$, $\beta\in Q^+$ and $\nu=(\nu_1,\cdots,\nu_n)\in I^\beta$. Then $e(\nu)\neq 0$ in $\RR^\Lam(\beta)$ if and only if $$
\sum\limits_{w\in\Sym(\nu,\nu)}\prod\limits_{t=1}^{n}N^\Lam(w,\nu,t)\neq 0 .
$$
\end{thm}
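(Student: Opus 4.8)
The plan is to deduce Theorem~\ref{mainthmB} as an immediate corollary of the dimension formula in Theorem~\ref{mainthmA} (equivalently its specialization~\eqref{dimFormula}). The key observation is that $e(\nu)\neq 0$ in $\RR^\Lam(\beta)$ if and only if the algebra $e(\nu)\RR^\Lam(\beta)e(\nu)$ is nonzero, i.e.\ has positive dimension. Indeed, if $e(\nu)=0$ then $e(\nu)\RR^\Lam(\beta)e(\nu)=0$; conversely, if $e(\nu)\neq 0$, then since $e(\nu)$ is an idempotent it lies in $e(\nu)\RR^\Lam(\beta)e(\nu)$, so this subspace contains a nonzero element and hence has positive dimension. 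Therefore $e(\nu)\neq 0$ in $\RR^\Lam(\beta)$ if and only if $\dim e(\nu)\RR^\Lam(\beta)e(\nu)>0$.

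First I would apply~\eqref{dimFormula} with $\nu'=\nu$ to write
\[
\dim e(\nu)\RR^{\Lambda}(\beta)e(\nu)=\sum_{w\in\Sym(\nu,\nu)}\prod_{t=1}^{n}N^\Lam(w,\nu,t).
\]
Since dimensions are non-negative, this sum is $\geq 0$, and it equals $0$ exactly when $e(\nu)=0$ by the equivalence above. Hence $e(\nu)\neq 0$ in $\RR^\Lam(\beta)$ if and only if this sum is nonzero (equivalently, strictly positive). This is precisely the claimed criterion.

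There is essentially no serious obstacle here, since all the work has been done in establishing Theorem~\ref{mainthmA}; the only point to be careful about is the elementary logical equivalence ``$e(\nu)\neq 0 \iff e(\nu)\RR^\Lam(\beta)e(\nu)\neq 0$'', which uses that $e(\nu)$ is an idempotent (so it cannot vanish in the corner algebra without vanishing in $\RR^\Lam(\beta)$). One should also note that the identity element $1_{\RR^\Lam(\beta)}=\sum_{\nu\in I^\beta}e(\nu)$ consists of pairwise orthogonal idempotents, which is the standard setup for KLR algebras, so $e(\nu)$ acting as identity on the corner algebra $e(\nu)\RR^\Lam(\beta)e(\nu)$ is unambiguous. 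With these remarks in place, the theorem follows at once.
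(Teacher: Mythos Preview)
Your proof is correct and follows essentially the same approach as the paper: the paper's own proof also notes that $e(\nu)\neq 0$ if and only if $e(\nu)\RR^\Lam(\beta)e(\nu)\neq 0$, and then invokes the ungraded dimension formula (Corollary~\ref{maincor1}, i.e.\ \eqref{dimFormula}) with $\nu'=\nu$ to conclude.
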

Using a second version of the dimension formula for $e(\nu)\RR^{\Lambda}(\beta)e(\nu)$ given in Theorem \ref{mainthm1b}, we also obtain in Theorem \ref{mainthmB2} a simplified (or divided power) version of the criterion for $e(\nu)\neq 0$ in $\RR^\Lam(\beta)$.

In a second application of our new dimension formula Theorem \ref{mainthmA} and (\ref{dimFormula}), we prove the following third main result of this paper, which gives a decomposition of $\dim\RR^\Lam(\beta)$ into a sum of some products of $\dim\RR^{\Lam^i}(\beta_i)$ with $\Lam=\sum_i\Lam^i$ and $\beta=\sum_{i}\beta_i$.

\begin{thm} \label{mainthmC}
Suppose $\Lam=\Lam^1+\cdots+\Lam^l$, where $\Lam^i\in P^+$ for each $1\leq i\leq l$. Then
$$\dim \RR^\Lam(\beta)=\sum_{\substack{\beta_1,\cdots,\beta_l\in Q^+\\ \beta=\beta_1+\cdots+\beta_l}}\Bigl(\frac{(|\beta_1|+\cdots+|\beta_l|)!}{|\beta_1|!\cdots|\beta_l|!}\Bigr)^2 \dim \RR^{\Lam^1}(\beta_1)\cdots \dim \RR^{\Lam^l}(\beta_l).
$$
\end{thm}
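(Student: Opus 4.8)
The plan is to derive the formula from the closed dimension formula~(\ref{dimFormula}) by a combinatorial reorganisation of its right hand side, with no further input from the structure of $\RR^\Lam(\beta)$. Put $n:=|\beta|$. Summing the $q=1$ specialisation of Theorem~\ref{mainthmA}, i.e.~(\ref{dimFormula}), over all $\nu,\nu'\in I^\beta$ and using that, for each fixed $\nu$, the sets $\Sym(\nu,\nu')$ partition $\Sym_n$ as $\nu'$ runs over $I^\beta$, we get
\[
\dim\RR^\Lam(\beta)=\sum_{\nu\in I^\beta}\ \sum_{w\in\Sym_n}\ \prod_{t=1}^{n}N^\Lam(w,\nu,t).
\]
The whole proof then hinges on a \emph{splitting identity} for the integers $N^\Lam(w,\nu,t)$. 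Fix $\Lam=\Lam^1+\cdots+\Lam^l$ with each $\Lam^i\in P^+$, a tuple $\nu\in I^\beta$, an element $w\in\Sym_n$, and a function $f\colon\{1,\dots,n\}\to\{1,\dots,l\}$; let $\nu^{[i]}$ be the subword $(\nu_t)_{t\in f^{-1}(i)}$ and let $w^{[i]}\in\Sym_{|f^{-1}(i)|}$ be the ``pattern'' of $w$ on the block $f^{-1}(i)$ (the permutation induced by $w$ under the order-preserving identifications $f^{-1}(i)\cong\{1,\dots,|f^{-1}(i)|\}\cong w(f^{-1}(i))$). The claim is
\[
\prod_{t=1}^{n}N^\Lam(w,\nu,t)\;=\;\sum_{f\colon\{1,\dots,n\}\to\{1,\dots,l\}}\ \prod_{i=1}^{l}\ \prod_{s=1}^{|f^{-1}(i)|}N^{\Lam^i}\bigl(w^{[i]},\nu^{[i]},s\bigr).
\]
To prove this one reads off from Definition~\ref{keydfn1} that $N^\Lam(w,\nu,t)$ is affine-linear in $\Lam$, of the shape $\langle\Lam,\alpha_{\nu_t}^\vee\rangle$ minus a sum of pairings $\langle\alpha_{\nu_s},\alpha_{\nu_t}^\vee\rangle$ over a set $B_t=B_t(w,\nu)\subseteq\{1,\dots,t-1\}$ which is local, in the sense that it commutes with passing to patterns (so $B_s$ for the restricted datum $(w^{[i]},\nu^{[i]})$ corresponds to $B_t(w,\nu)\cap f^{-1}(i)$ when $t$ is the $s$-th element of $f^{-1}(i)$). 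Expanding both sides into monomials in the $\langle\Lam^i,\alpha_{\nu_t}^\vee\rangle$ and the pairings, and grouping by the ``dependency graph'' on $\{1,\dots,n\}$ whose edges record which $s\in B_t$ was chosen for each $t$, one is reduced to the remark that this graph is a forest — every edge joins a vertex $t$ to a \emph{smaller} vertex lying in $B_t$, so there are no cycles — whence each connected component contributes exactly one $\langle\Lam^i,-\rangle$-factor, and the two sides match term by term.

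Granting the splitting identity, the remainder is bookkeeping. Substitute it into the displayed formula and interchange the sums; for fixed $\nu$ and $f$ the summand depends on $w$ only through $(w^{[1]},\dots,w^{[l]})$, and the surjection $w\mapsto(w^{[1]},\dots,w^{[l]})$ from $\Sym_n$ onto $\prod_i\Sym_{|f^{-1}(i)|}$ is $\tfrac{n!}{\prod_i|f^{-1}(i)|!}$-to-one, a fibre being indexed by the ordered set partition $\bigl(w(f^{-1}(1)),\dots,w(f^{-1}(l))\bigr)$ of $\{1,\dots,n\}$ into blocks of the prescribed sizes. Hence, writing $\beta_i:=\sum_{t\in f^{-1}(i)}\alpha_{\nu_t}$, the sum over $w$ contributes
\[
\frac{n!}{\prod_i|f^{-1}(i)|!}\ \prod_{i=1}^{l}\Bigl(\sum_{v\in\Sym_{|f^{-1}(i)|}}\prod_{s}N^{\Lam^i}(v,\nu^{[i]},s)\Bigr)\;=\;\frac{n!}{\prod_i|f^{-1}(i)|!}\ \prod_{i=1}^{l}\dim e(\nu^{[i]})\RR^{\Lam^i}(\beta_i),
\]
the last step being~(\ref{dimFormula}) applied to $\RR^{\Lam^i}(\beta_i)$ and summed over the right idempotent. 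Finally a pair $(\nu,f)$ is the same datum as a choice of $(\beta_1,\dots,\beta_l)\in(Q^+)^l$ with $\sum_i\beta_i=\beta$, together with an ordered set partition of $\{1,\dots,n\}$ into blocks of sizes $|\beta_1|,\dots,|\beta_l|$ and a tuple $(\mu^{(i)})_i$ with $\mu^{(i)}\in I^{\beta_i}$; summing over the $\tfrac{n!}{\prod_i|\beta_i|!}$ ordered set partitions supplies the \emph{second} factor of $\tfrac{n!}{\prod_i|\beta_i|!}$, while $\sum_{(\mu^{(i)})_i}\prod_i\dim e(\mu^{(i)})\RR^{\Lam^i}(\beta_i)=\prod_i\dim\RR^{\Lam^i}(\beta_i)$. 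Collecting terms yields exactly the asserted identity, since $|\beta_1|+\cdots+|\beta_l|=|\beta|=n$.

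The serious step is the splitting identity: it requires that one pin down precisely how $N^\Lam(w,\nu,t)$ depends on $\Lam$ (affine-linearly, with constant term a root-pairing supported on earlier positions) and that the combinatorial data $B_t$ be genuinely local, after which the acyclicity/expansion argument runs. Everything past that point is a careful but routine count of ordered set partitions; the squared multinomial coefficient is the combination of one such count for splitting the residue sequence $\nu$ and one for splitting the permutation $w$.
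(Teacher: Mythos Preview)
Your proof is correct, and its overall architecture is the same as the paper's: both reduce the theorem to a purely combinatorial \emph{splitting identity} for the integers $N^\Lam(w,\nu,t)$, and then do the same bookkeeping with ordered set partitions to turn that identity into the squared multinomial coefficient. Your splitting identity is exactly the general-$l$ version of the paper's claim~(\ref{claimrd11}); the paper states and proves it only for $l=2$ (Theorem~\ref{mainthmC1}) and then obtains Theorem~\ref{mainthmC} by induction on $l$ via Corollary~\ref{level2cor} and Corollary~\ref{genaralizetion}.

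Where you genuinely differ is in the \emph{proof} of the splitting identity. The paper argues iteratively: it introduces involutions $f_n,f_{n-1},\dots,f_1$ on $D^2(n)$ that toggle the membership of a single index between the two blocks, and uses the additivity $N^{\Lam^1}(\dots)+N^{\Lam^2}(\dots)=N^{\Lam}(w,\nu,k)$ at the toggled position to peel off one factor of $N^\Lam(w,\nu,k)$ at a time. Your forest argument is a cleaner repackaging of the same additivity: expanding the product $\prod_t N^\Lam(w,\nu,t)$ after writing $\langle\Lam,h_{\nu_t}\rangle=\sum_i\langle\Lam^i,h_{\nu_t}\rangle$ indexes each monomial by a functional graph $t\mapsto s_t\in J_w^{<t}$ (or $t\mapsto$ a colour $i$), which is a forest because $s_t<t$; the colouring $f$ on the right-hand side is then forced to be constant on components, giving a bijection of monomials. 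This handles arbitrary $l$ in one stroke and makes transparent why the identity holds, whereas the paper's involution argument is tailored to $l=2$ and must be iterated. Both arguments are equivalent in content; yours is more conceptual, the paper's more explicit. Your sketch of the forest bijection is terse but accurate---spelling out the inverse (from a pair $(f,\text{choices})$ back to the ``parent-or-colour'' function) would make it watertight.
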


Our third application of Theorem \ref{mainthmA} is the construction of monomial bases for $\RR^\Lam(\beta)$, which is the starting point of this work. As is well known, constructing monomial bases for the cyclotomic quiver Hecke algebra $\RR^\Lam(\beta)$ is a challenging problem. The first author of this paper and Liang have constructed a monomial basis for the cyclotomic nilHecke algebra in \cite{HuL}. In general, even in the special case of type $A$, no such monomial basis is known at the moment. Our new dimension formula for $\dim\RR^\Lam(\beta)$ gives us a very strong indication that those integers $N^{\Lam}(w,\nu,t)$ might play a key role in the construction of monomial bases of $\RR^\Lam(\beta)$ for general types.

In our fourth main result, we shall construct monomial basis for certain special bi-weight subspace of $\RR^\Lam(\beta)$.
To state the result, we need some notations. We fix $p\in\N$, $\fb:=(b_1,\cdots,b_p)\in\N^p$ and $\nu^1,\cdots,\nu^p\in I$ such that $\nu^i\neq\nu^j$ for any $1\leq i\neq j\leq p$ and $\sum_{i=1}^{p}b_i=n$. We define $$
\wnu=(\wnu_1,\cdots,\wnu_n):=\bigl(\underbrace{\nu^1,\cdots,\nu^1}_{\text{$b_1$ copies}},\cdots,\underbrace{\nu^p,\cdots,\nu^p}_{\text{$b_p$ copies}}\bigr)\in I^\beta ,
$$
where $\beta\in Q_n^+$. Note that each $\mu\in I^\beta$ is in the same $\Sym_n$-orbit as some $\widetilde{\nu}$ of the above form. The following theorem is the fourth main result of this paper. Once again, the theorem is valid for {\it arbitrary} symmetrizable Cartan matrix.
\begin{thm}\label{mainthmD} Let $\mu\in I^\beta$ and $\wnu$ be given as in the last paragraph. Then $$ \text{$e(\wnu)\RR^\Lam({\beta})e(\mu)\neq 0$ if and only if $N^\Lam(\mu,k)>0$ for any $1\leq k\leq n$,} $$
where $N^\Lam(\mu,k)$ is defined as in (\ref{wt}). In that case, fix any reduced expression $w=s_{i_1}\cdots s_{i_t}\in \Sym(\mu,\wnu)$ and define $\psi_w=\psi_{i_1}\cdots\psi_{i_t}$. The following set $$
\Bigl\{\psi_{w }\prod_{k=1}^{n}x_{k}^{r_{k}}e(\mu)\Bigm| w \in\Sym(\mu,\wnu), 0\leq r_{k}<N^\Lam(\mu,k), \forall\,1\leq k\leq n\Bigr\}$$
gives a $K$-basis of $e(\wnu)\RR^\Lam({\beta})e(\mu)$.
\end{thm}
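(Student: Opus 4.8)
The plan is to deduce everything from the monomial basis of the non-cyclotomic quiver Hecke algebra $\RR(\beta)$ together with Theorem~\ref{mainthmA}. Write $N_k:=N^\Lam(\mu,k)$ and $d:=|\Sym(\mu,\wnu)|$. First I would record the elementary fact that $e(\wnu)\,\psi_w x_1^{a_1}\cdots x_n^{a_n}e(\mu)=0$ unless $w\mu=\wnu$: indeed $x_1^{a_1}\cdots x_n^{a_n}$ commutes with $e(\mu)$ and $\psi_we(\mu)=e(w\mu)\psi_we(\mu)$, so the element is annihilated on the left by $e(\wnu)$ unless $e(\wnu)e(w\mu)\ne0$. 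Since $\{\psi_w x_1^{a_1}\cdots x_n^{a_n}e(\nu):w\in\Sym_n,\ \mathbf a\in\N^n,\ \nu\in I^\beta\}$ is a $K$-basis of $\RR(\beta)$, it follows that the images of $\{\psi_wx_1^{a_1}\cdots x_n^{a_n}e(\mu):w\in\Sym(\mu,\wnu),\ \mathbf a\in\N^n\}$ already span $e(\wnu)\RR^\Lam(\beta)e(\mu)$, where for each such $w$ we fix once and for all a reduced expression, exactly as in the statement.

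The heart of the matter is a \emph{straightening lemma}: for every $w\in\Sym(\mu,\wnu)$ and every $\mathbf a\in\N^n$, the image of $\psi_w x_1^{a_1}\cdots x_n^{a_n}e(\mu)$ in $\RR^\Lam(\beta)$ lies in the span of
\[
B_0:=\Bigl\{\psi_{w'}\prod_{k=1}^{n}x_k^{r_k}e(\mu)\ \Bigm|\ w'\in\Sym(\mu,\wnu),\ 0\le r_k<N_k\ \text{for all }k\Bigr\}.
\]
I would prove this by induction on $k$ together with a suitable well-order on the exponent tuples: the case $k=1$ is exactly the defining cyclotomic relation $x_1^{\langle\Lam,\alpha^\vee_{\mu_1}\rangle}e(\mu)=0$, since $N_1=\langle\Lam,\alpha^\vee_{\mu_1}\rangle$; for the inductive step one uses the KLR relations to move a power of $x_k$ past a suitable $\psi_j$, trading it for a lower power of $x_k$, a power of $x_{k-1}$, and error terms which are either of strictly smaller exponent tuple or carry a longer $\psi$-word that, once sandwiched by $e(\wnu)(-)e(\mu)$, either vanishes by the first paragraph or re-enters the induction. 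Here the explicit shape of $\wnu$ from~(\ref{wnu}) and of the cut-offs $N^\Lam(\mu,k)$ from~(\ref{wt}) is precisely what guarantees that the power $N_k$, and no smaller one, does the job. Granting the lemma, $e(\wnu)\RR^\Lam(\beta)e(\mu)$ is spanned by $B_0$; in particular $B_0=\emptyset$, hence $e(\wnu)\RR^\Lam(\beta)e(\mu)=0$, whenever some $N_k\le0$, which is the ``only if'' direction.

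Now assume all $N_k>0$. On the one hand $B_0$ is a list of exactly $d\prod_{k=1}^n N_k$ elements spanning $e(\wnu)\RR^\Lam(\beta)e(\mu)$. On the other hand, applying Theorem~\ref{mainthmA} with $(\nu,\nu')=(\wnu,\mu)$ and using the combinatorial properties of $\wnu$ recorded in Section~5 — namely that $|\Sym(\wnu,\mu)|=|\Sym(\mu,\wnu)|=d$ and that for every $w\in\Sym(\wnu,\mu)$ each integer $N^\Lam(w,\wnu,t)$ is positive with $\prod_{t=1}^n N^\Lam(w,\wnu,t)=\prod_{k=1}^n N_k$ — gives $\dim e(\wnu)\RR^\Lam(\beta)e(\mu)=d\prod_{k=1}^n N_k$. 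A spanning list whose length equals the dimension must be a basis, which simultaneously yields the ``if'' direction and the basis statement. Tracking the $\Z$-grading upgrades this: $x_k^{r_k}e(\mu)$ has degree $2d_{\mu_k}r_k$, so summing over $0\le r_k<N_k$ produces the factor $q_{\mu_k}^{N_k-1}[N_k]_{\mu_k}$, and comparison with the graded dimension in Theorem~\ref{mainthmA} shows $B_0$ is in fact a homogeneous $K$-basis.

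The main obstacle is the straightening lemma. Reducing high powers of $x_k$ modulo the cyclotomic ideal is, in type $A$, accessible through the identification of $\RR^\Lam(\beta)$ with a cyclotomic Hecke algebra and its cellular structure, and for a single vertex it is the cyclotomic nilHecke computation of~\cite{HuL}; but for an arbitrary symmetrizable Cartan matrix it must be carried out purely from the KLR relations, and forcing the error terms to stay inside the span of $B_0$ is exactly what dictates the particular choice of $\wnu$ in~(\ref{wnu}) — chosen so that its residues ``line up'' and the numbers $N^\Lam(\mu,k)$ become the correct truncation bounds. Controlling this uniformly, and matching the resulting count against Theorem~\ref{mainthmA}, is the crux of the proof.
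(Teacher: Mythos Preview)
Your overall architecture---span by a straightened set $B_0$, then match cardinality against the dimension from Theorem~\ref{mainthmA}---is the same as the paper's, but two of your steps contain genuine gaps.

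First, your dimension count is wrong as stated. You assert that for every $w\in\Sym(\wnu,\mu)$ the product $\prod_{t=1}^n N^\Lam(w,\wnu,t)$ equals $\prod_{k=1}^n N_k$ and that each factor is positive. This is false already in the nilHecke case $\wnu=\mu=(0,0)$, $\Lam=\ell\Lam_0$: there $N^\Lam(1,\wnu,2)=\ell-2$ and $N^\Lam(s_1,\wnu,2)=\ell$, giving products $\ell(\ell-2)$ and $\ell^2$, neither of which equals $N_1N_2=\ell(\ell-1)$, and the first is zero when $\ell=2$. What is true is that the \emph{sum} $\sum_{w}\prod_t N^\Lam(w,\wnu,t)$ equals $d\prod_k N_k$; this is Theorem~\ref{mainthm3a} in the paper, and its proof requires the reduction Lemma~\ref{keylem2} together with a nontrivial summation over $\Sym_\fb$ and the bijection of Lemma~\ref{bij}. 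The ``only if'' direction likewise needs a separate argument (Corollary~\ref{maincor3}) showing that the $N_k$ cannot become negative without first passing through zero.

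Second, your straightening lemma is the entire content of the theorem, and the sketch you give (push a power of $x_k$ through some $\psi_j$) is not how it goes. The paper's Lemma~\ref{keylem3} constructs, for each $k$, a specific monic polynomial $p_k\in K[x_1,\dots,x_k]$ of degree $N_k$ in $x_k$ by permuting $\mu$ so that all residues $\nu^j$ with $j>i$ (where $\mu_k=\nu^i$) are pushed past position $k$, then applying the cyclotomic relation at position $1$ and the operators $\partial_a$ of Lemma~\ref{Lemma 8}. Crucially, this only shows $\psi_{d_\mu}p_ke(\mu)=0$, not $\psi_wp_ke(\mu)=0$ for arbitrary $w\in\Sym(\mu,\wnu)$. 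The paper therefore first proves the basis statement (Theorem~\ref{mainthm3b}) for the distinguished elements $\psi_w^{\mathbf 1}$ built through $d_\mu$, and only then, in a separate induction on $\ell(w)$, shows that changing to an arbitrary reduced expression of $w$ produces a unitriangular transition matrix. Your proposal conflates these two steps into one unproved straightening statement.
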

We call the above basis a {\it monomial basis} of $e(\wnu)\RR^\Lam({\beta})e(\mu)$. Applying the anti-isomorphism ``$\ast$'', one can also get a monomial basis for the subspace $e(\mu)\RR^\Lam({\beta})e(\wnu)$. The main difficulty in generalizing the above theorem to arbitrary direct summand
$e(\mu)\RR^\Lam({\beta})e(\nu)$ lies in the fact the integers $N^\Lam(w,\mu,k)$ could be negative. However, we construct the monomial bases for all the direct summands in the $n=3$ case in Subsection 5.3. The construction still indicates the expected monomial bases have some close relationships with those integers $N^\Lam(w,\mu,k)$. We also apply our main results Theorem \ref{mainthmA} and Corollary \ref{maincor1} to give some concrete examples to show that the cyclotomic quiver Hecke algebra $\RR^\Lam(n):=\oplus_{\beta\in Q_n^+}\RR^\Lam(\beta)$ is in general not graded free over its subalgebra $\RR^\Lam(m)$ for $m\leq n$.

%

The content of the paper is organised as follows. In Section 2 we give some preliminary definitions and results on the quantum groups $U_q(\mathfrak{g})$ associated to an arbitrary symmetrizable generalized Cartan matrix $A$, quiver Hecke algebra $\RR(\beta)$ and cyclotomic quiver Hecke algebra $\RR^\Lam(\beta)$ associated to $A, \beta\in Q_n^+$, polynomials $\{Q_{i,j}(u,v)\}$ and $\Lam\in P^+$. In Section 3 we give the proof of our first main result Theorem \ref{mainthmA}. The proof of Theorem \ref{mainthmA} essentially relies on Kang-Kashiwara's categorification of the integral highest weight module $V(\Lam)$ via the category of finite dimensional projective modules over $\RR^\Lam(\beta)$. We give in Theorem \ref{mainthm1b} a second version of the dimension formula for the direct summand $e(\nu)\RR^\Lam(\beta)e(\nu)$. Our second main results Theorem \ref{mainthmB} is proved in Subsection 3.3. In Section 4 we prove several level reduction results in Theorem \ref{mainthmC1} and Corollary \ref{genaralizetion} for the dimension formulae. As a consequence, we obtain in Corollary \ref{maincorC3} a third necessary and sufficient condition for the KLR idempotent $e(\nu)$ to be nonzero in $\RR^\Lam(\beta)$. In Section 5 we apply Theorem \ref{mainthmA} to the construction of monomial bases of $\RR^\Lam(\beta)$. We give the proof of our fourth main result Theorem \ref{mainthmD} in this section. We first construct a monomial bases of $e(\wnu)\RR^\Lam(\beta)e(\wnu)$ in Subsection 5.1. Then we construct a monomial bases of $e(\wnu)\RR^\Lam(\beta)e(\mu)$ for arbitrary $\mu$ in Subsection 5.2. Using the results obtained in Subsections 5.1, 5.2, we are able to construct in Subsection 5.3 a monomial basis for arbitrary direct summand $e(\mu)\RR^\Lam(\beta)e(\nu)$ of $\RR^\Lam(\beta)$ in the case $n=3$. Finally we give in Subsection 5.4 some concrete examples to show that the cyclotomic quiver Hecke algebra $\RR^\Lam(n):=\oplus_{\beta\in Q_n^+}\RR^\Lam(\beta)$ is in general not graded free over its subalgebra $\RR^\Lam(m)$ for $m<n$.

\bigskip
\centerline{Acknowledgements}
\bigskip

The research was supported by the National Natural Science Foundation of China (No. 12171029) and Beijing Natural Science Foundation (No.1232017). Both authors are grateful to the referee for his/her careful reading and invaluable suggestion and comments.
\bigskip

\section{Preliminary}

In this section we shall recall some basic knowledge about the quantum groups and (cyclotomic) quiver Hecke algebras.

Let $A:=(a_{ij})_{i,j\in I}$ be a symmetrizable generalized Cartan matrix. Let $\{d_i\in\Z_{>0}|i\in I\}$ be a family of positive integers such that $(d_ia_{ij})_{i,j\in I}$ is symmetric.
Let $(P,\Pi,\Pi^\vee)$ be a realization of $A$ and $\mathfrak{g}$ be the corresponding Kac-Moody Lie algebra (\cite{Kac}). In other words, $P$ is a free abelian group called the weight lattice, $\Pi=\{\alpha_i|i\in I\}$ is the set of simple roots, $\Pi^\vee=\{h_i|i\in I\}\subset P^\vee:=\Hom_\Z(P,\Z)$ is the set of simple coroots, $\<\alpha_j,h_i\>=a_{ij}$, $\forall\,i,j\in I$, and $\Pi, \Pi^\vee$ are linearly independent sets.

There is a symmetric bilinear pairing $(-|-)$ on $P$ satisfying $$
(\alpha_j|\alpha_i)=d_ia_{ij},\quad (\Lam|\alpha_i)=d_i\<\Lam,h_i\>, \,\,\forall\,\Lam\in P.
$$
In particular, $d_i=(\alpha_i|\alpha_i)/2$. We denote by $P^+=\{\Lam\in P|\<\Lam,h_i\>\geq 0,\forall\,i\in I\}$ the set of dominant integral weights. For each $i\in I$, let $\Lam_i$ be the $i$th fundamental weight, i.e., $\<\Lam_i,h_j\>=\delta_{ij}, \forall\,j\in I$. Then each $\Lam\in P^+$ can be written as $\Lam=\sum_{i\in I}k_i\Lam_i$, and we call $\ell(\Lam):=\sum k_i$ the level of $\Lam$.

Let $q$ be an indeterminate. For any $k\in I$, we set $q_k:=q^{d_k}=q^{(\alpha_k|\alpha_k)/2}$. For any $m\in\Z$, we define \begin{equation}\label{quantum1}
[m]_{k}:=\frac{q_k^m-q_k^{-m}}{q_k-q_{k}^{-1}}.
\end{equation}
For any $m,n\in\N$ with $m\geq n$, we define \begin{equation}\label{quantum2}
[m]^{!}_{k}:=\prod_{t=1}^{m}[t]_k,\,\,\biggl[\begin{matrix}m\\ n\end{matrix}\biggr]_k:=\frac{[m]_k^{!}}{[m-n]_k^{!}[n]_k^{!}} .
\end{equation}
If $d_k=1$ for any $k\in I$, then we shall omit the subscript $k$ and write $[m]$ instead of $[m]_{k}$.

\begin{dfn}\label{qgrp} The quantum group (or quantized enveloping algebra) $U_q(\fg)$ (\cite{Lu}) associated with $(A,P,\Pi,\Pi^\vee)$ is the associative algebra over $\Q(q)$ with $1$ generated by $e_i,f_i$ ($i\in I$) and $q^h$ ($h\in P^\vee$) satisfying the following relations: $$\begin{aligned}
(1)\,\,& q^0=1,\,\, q^h q^{h'}=q^{h+h'},\,\,\forall\, h,h'\in P^\vee;\\
(2)\,\,& q^he_iq^{-h}=q^{\<\alpha_i,h\>}e_i,\,q^hf_iq^{-h}=q^{-\<\alpha_i,h\>}f_i,\,\,\forall\,h\in P^\vee, i\in I;\\
(3)\,\,& e_if_j-f_je_i=\delta_{ij}\frac{K_i-K_i^{-1}}{q_i-q_i^{-1}},\,\,\text{where $K_i=q_i^{h_i}$};\\
(4)\,\,& \sum_{k=0}^{1-a_{ij}}(-1)^k\biggl[\begin{matrix}1-a_{ij}\\ k\end{matrix}\biggr]_ie_i^{1-a_{ij}-k}e_je_i^k=0,\,\,\forall\,i\neq j;\\
(5)\,\,& \sum_{k=0}^{1-a_{ij}}(-1)^k\biggl[\begin{matrix}1-a_{ij}\\ k\end{matrix}\biggr]_if_i^{1-a_{ij}-k}f_jf_i^k=0,,\,\forall\,i\neq j.
\end{aligned}
$$
\end{dfn}

We set $Q:=\bigoplus_{i\in I}\Z\alpha_i$, and call it the root lattice. Set $Q^+:=\bigoplus_{i\in I}\N\alpha_i$, and call it the positive root lattice. For each $\beta=\sum_{i\in I}k_i\alpha_i\in Q^+$, we define $|\beta|:=\sum_{i\in I}k_i$. For each $n\in\N$, we set $Q_n^+:=\{\beta\in Q^+||\beta|=n\}$.

Let $u,v$ be two indeterminates. For any $i,j\in I$, let $Q_{i,j}(u,v)\in K[u.v]$ be a polynomial of the form $$
Q_{i,j}(u,v)=\begin{cases} \sum_{p(\alpha_i|\alpha_i)+q(\alpha_j|\alpha_j)+2(\alpha_i|\alpha_j)=0}t_{i,j;p,q}u^pv^q, &\text{if $i\neq j$;}\\
0, &\text{if $i=j$,}
\end{cases}
$$
where $t_{i,j;p,q}\in K$ are such that $t_{i,j;-a_{ij},0}\in K^\times$, and they satisfy that $Q_{i,j}(u,v)=Q_{j,i}(v,u)$, $\forall\,i,j\in I$.
In particular, if we regard $Q_{i,j}(u,v)$ as a polynomial on $u$, then the highest degree of $u$ in $Q_{i,j}(u,v)$ is $-a_{ij}$ with leading coefficient $t_{i,j;-a_{ij},0}\in K^\times$.

Let $I^n:=\{\nu=(\nu_1,\cdots,\nu_n)|\nu_i\in I,\forall\,1\leq i\leq n\}$. For any $\beta\in Q_n^+$, we define $$
I^\beta=\biggl\{\nu=(\nu_1,\cdots,\nu_n)\in I^n\biggm|\sum_{i=1}^n\alpha_{\nu_i}=\beta\biggr\}.
$$

Let $\Sym_n$ be the symmetric group on $\{1,2,\cdots,n\}$. Then $\Sym_n$ acts on $I^n$ from the left-hand side by places permutation. That is, for any $w\in\Sym_n$, $\nu=(\nu_1,\cdots,\nu_n)$, $$
w\nu=w(\nu_1,\cdots,\nu_n):=(\nu_{w^{-1}(1)},\cdots,\nu_{w^{-1}(n)}) .
$$
One can also consider the action of $\Sym_n$ on $I^n$ from the right-hand side, then we have $$
\nu w=(\nu_1,\cdots,\nu_n)w:=(\nu_{w(1)},\cdots,\nu_{w(n)}) .
$$
In particular, $w\nu=\nu w^{-1}$.

\begin{dfn}\label{D:QuiverRelations}
  Let $K$ be a field. Let $n\in\N$ and $\beta\in Q_n^{+}$. The quiver Hecke (or KLR) algebra $\RR(\beta)$ associated with polynomial $(Q_{i,j}(u,v))_{i,j\in I}$ and $\beta\in Q_n^+$ is the unital associative $K$-algebra with generators
  $$\{\psi_1,\dots,\psi_{n-1}\} \cup   \{x_1,\dots, x_n \} \cup \{e(\nu)|\nu\in I^\beta\} $$
  and relations
  \bgroup
      \setlength{\abovedisplayskip}{1pt}
      \setlength{\belowdisplayskip}{1pt}
  \begin{align*}
         e(\nu) e(\nu') &= \delta_{\nu\nu'} e(\nu),
    & \sum_{\nu\in I^\beta}e(\nu)=1,& & &\\
    x_r e(\nu) &= e(\nu) x_r,
    &\psi_r e(\nu)&= e(s_r\nu) \psi_r,
    &x_r x_s &= x_s x_r,
  \end{align*}
  \begin{align*}
    \psi_r x_{r+1} e(\nu)&=(x_r\psi_r+\delta_{\nu_r\nu_{r+1}})e(\nu),&
    x_{r+1}\psi_re(\nu)&=(\psi_r x_r+\delta_{\nu_r\nu_{r+1}})e(\nu),\\
    \psi_r x_s  &= x_s \psi_r,&&\text{if }s \neq r,r+1,\\
    \psi_r \psi_s &= \psi_s \psi_r,&&\text{if }|r-s|>1,\notag
\end{align*}
\begin{align*}
  \psi_r^2e(\nu) &= Q_{\nu_r,\nu_{r+1}}(x_r,x_{r+1})e(\nu),\\
\psi_{r+1}\psi_{r} \psi_{r+1} e(\nu)-\psi_{r}\psi_{r+1} \psi_{r} e(\nu) &=\delta_{\nu_r \nu_{r+2}}\frac{Q_{\nu_r,\nu_{r+1}}(x_r,x_{r+1})-Q_{\nu_r,\nu_{r+1}}(x_{r+2},x_{r+1})}{x_r-x_{r+2}}e(\nu),
\end{align*}
\egroup
for $\nu,\nu'\in I^\beta$ and all admissible $r$ and $s$.
\end{dfn}

For $\Lam\in P^+$, $i\in I$, we define $$
a^\Lam_i(x)=x^{\<\Lam,h_i\>}.
$$

\begin{dfn} The cyclotomic quiver Hecke (or cyclotomic KLR) algebra $\RR^\Lam(\beta)$ associated with the polynomial $(Q_{i,j}(u,v))_{i,j\in I}$, $\beta\in Q_n^+$ and $\Lam\in P^+$ is defined to be the quotient of $\RR(\beta)$ by the two-sided ideal of $\RR(\beta)$ generated by $a_{\nu_1}^\Lam(x_1)e(\nu)$, $\nu\in I^\beta$.
\end{dfn}

The idempotents $e(\mu)\in\RR(\beta)$ and $e(\nu)\in\RR^\Lam(\beta)$ will be called the KLR idempotents of $\RR(\beta)$ and $\RR^\Lam(\beta)$ respectively. The algebra $\RR(\beta)$ is $\Z$-graded with its grading structure given by  $$
\deg e(\nu)=0,\,\,\deg(x_ke(\nu)):=(\alpha_{\nu_k}|\alpha_{\nu_k}),\,\,\deg(\psi_ke(\nu)):=-(\alpha_{\nu_k}|\alpha_{\nu_{k+1}}) .
$$
Inheriting the $\Z$-grading from $\RR(\beta)$, the cyclotomic quiver Hecke algebra $\RR^\Lam(\beta)$ is $\Z$-graded too. There is a unique $K$-algebra anti-isomorphism ``$\ast$'' of $\RR^\Lam(\beta)$ which is defined on its KLR generators by $$
e(\nu)^\ast=e(\nu),\,\,\psi_r^{\ast}:=\psi_r,\,\,x_s^\ast:=x_s,\,\,\,\forall\, \nu\in I^\beta, 1\leq r<n, 1\leq s\leq n .
$$

We use $q$ to denote the grading shift functor on $\Mod(\RR^\Lam(\beta))$. That means $$(qM)_j=M_{j-1},
$$ for any $M=\oplus_{\substack{j\in \Z}}M_j\in\Mod(\RR^\Lam(\beta))$. Then the Grothendieck group $[\Mod(\RR^\Lam(\beta))]$ becomes a $\Z[q,q^{-1}]$-module, where $q[M]=[qM]$ for $M\in \Mod(\RR^\Lam(\beta))$.
Let $\beta\in Q_n^+$ and $i\in I$, we set $$
e(\beta,i):=\sum_{\nu=(\nu_1,\cdots,\nu_n)\in I^{\beta}}e(\nu_1,\cdots,\nu_n,i).
$$
Kang and Kashiwara have introduced restriction functors and induction functors in \cite{KK} as follows: $$\begin{aligned}
E_i^\Lam:\, \Mod(\RR^\Lam(\beta+\alpha_i))&\rightarrow \Mod(\RR^\Lam(\beta)),\\
N&\mapsto e(\beta,i)N=e(\beta,i)\RR^\Lam(\beta+\alpha_i)\otimes_{\RR^{\Lam}(\beta+\alpha_i)}N,\\
F_i^\Lam:\, \Mod(\RR^\Lam(\beta))&\rightarrow \Mod(\RR^\Lam(\beta+\alpha_i)),\\
M&\mapsto \RR^\Lam(\beta+\alpha_i)e(\beta,i)\otimes_{\RR^{\Lam}(\beta)}M .
\end{aligned}
$$
Let $\Proj(\RR^\Lam(\beta))$ be the category of finite dimensional projective $\RR^\Lam(\beta)$-modules and $K\bigl(\Proj(\RR^\Lam(\beta))\bigr)$ its Grothendieck group. Let $\rm{K}_i$ be the endomorphism of $K\bigl(\Proj(\RR^\Lam(\beta))\bigr)$ given by multiplication of $q_i^{\<\Lam-\beta,h_i\>}$.
Let ${\rm E}_i:=q_i^{1-\<\Lam-\beta,h_i\>}[E_i^\Lam]$, ${\rm F}_i:=[F_i^\Lam]$, where $[E_i^\Lam]: K\bigl(\Proj(\RR^\Lam(\beta+\alpha_i))\bigr)\rightarrow K\bigl(\Proj(\RR^\Lam(\beta))\bigr)$ and $[F_i^\Lam]: K\bigl(\Proj(\RR^\Lam(\beta))\bigr)\rightarrow K\bigl(\Proj(\RR^\Lam(\beta+\alpha_i))\bigr)$ are the naturally induced map on the Grothendieck groups. Then by \cite[Lemma 6.1]{KK}, \begin{equation}\label{effe}
{\rm E}_i{\rm F}_j-{\rm F}_j{\rm E}_i=\delta_{ij}\frac{{\rm K}_i-{\rm K}_i^{-1}}{q_i-q_i^{-1}} .
\end{equation}

Let $U_{\Z[q,q^{-1}]}(\mathfrak{g})$ be the Lusztg's $\Z[q,q^{-1}]$-form of the quantum group $U_q(\mathfrak{g})$. Let $v_\Lam$ be a fixed highest weight vector of the irreducible highest weight $U_q(\mathfrak{g})$-module $V(\Lam)$. Set $V_{\Z[q,q^{-1}]}(\Lam):=U_{\Z[q,q^{-1}]}(\mathfrak{g})v_\Lam$.

\begin{thm}\text{(\cite{KK})} For each $\Lam\in P^+$, there is an $U_{\Z[q,q^{-1}]}(\mathfrak{g})$-module isomorphism: $K\bigl(\Proj\RR^\Lam\bigr)\cong V_{\Z[q,q^{-1}]}(\Lam)$.
\end{thm}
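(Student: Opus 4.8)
The plan is to realize $K(\Proj\RR^\Lam):=\bigoplus_{\beta\in Q^+}K\bigl(\Proj(\RR^\Lam(\beta))\bigr)$ as an integrable highest weight $U_{\Z[q,q^{-1}]}(\mathfrak{g})$-module. First I would use the functors $E^\Lam_i,F^\Lam_i$ to put a $U_{\Z[q,q^{-1}]}(\mathfrak{g})$-module structure on it; then show it is generated by the class of the one-dimensional algebra $\RR^\Lam(0)\cong K$, which behaves as a highest weight vector of weight $\Lam$; deduce from the universal property of $V(\Lam)$ a $U_{\Z[q,q^{-1}]}(\mathfrak{g})$-module map $V_{\Z[q,q^{-1}]}(\Lam)\to K(\Proj\RR^\Lam)$; and finally check it is bijective by a rank count that uses the finite dimensionality of the $\RR^\Lam(\beta)$ and the irreducibility of $V(\Lam)$ over $\Q(q)$.

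The functorial input I would establish (or quote from \cite{KK}) is: $\RR^\Lam(\beta)$ is finite dimensional over $K$, so each $K\bigl(\Proj(\RR^\Lam(\beta))\bigr)$ is a free $\Z[q,q^{-1}]$-module of finite rank; and the bimodules $\RR^\Lam(\beta+\alpha_i)e(\beta,i)$ and $e(\beta,i)\RR^\Lam(\beta+\alpha_i)$ are projective as one-sided modules, so $F^\Lam_i$ and $E^\Lam_i$ are exact and carry finite dimensional projectives to finite dimensional projectives, hence induce the operators ${\rm F}_i=q_i^{1-\langle\Lam-\beta,h_i\rangle}[F^\Lam_i]$ and ${\rm E}_i=[E^\Lam_i]$ on $K(\Proj\RR^\Lam)$. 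Placing $K\bigl(\Proj(\RR^\Lam(\beta))\bigr)$ in weight $\Lam-\beta$, relations (1) and (2) of Definition \ref{qgrp} are immediate and relation (3) is precisely (\ref{effe}). What remains are the quantum Serre relations (4) and (5), which must be verified on $K_0$: relation (5) (for the $f_i$) I would obtain by transporting the Khovanov--Lauda--Rouquier categorification $K(\Proj\RR)\cong U_q(\mathfrak{g})^-$ along the base-change functors $P\mapsto\RR^\Lam(\beta)\otimes_{\RR(\beta)}P$, which commute with the $F_i$'s and surject onto the relevant Grothendieck groups; relation (4) (for the $e_i$) has to be checked by a direct analysis of the relevant cyclotomic projective modules. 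This last verification, together with the projectivity and exactness of the functors (which ultimately rest on the finite dimensionality of $\RR^\Lam(\beta)$), is the technical heart of the argument, and is where I expect essentially all of the difficulty to concentrate --- it amounts precisely to promoting the family $\{\RR^\Lam(\beta)\}_\beta$ to a categorical $U_q(\mathfrak{g})$-action.

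Granting the module structure, $\RR^\Lam(0)\cong K$ gives $K\bigl(\Proj(\RR^\Lam(0))\bigr)=\Z[q,q^{-1}]\,[\RR^\Lam(0)]$, free of rank one and of weight $\Lam$; clearly ${\rm E}_i[\RR^\Lam(0)]=0$ for all $i$, since nothing lives in the $(-\alpha_i)$-component. Iterating $F^\Lam_i$ on $\RR^\Lam(0)$ yields $\RR^\Lam(k\alpha_i)$, the cyclotomic nilHecke algebra of rank $k$ and level $\langle\Lam,h_i\rangle$, which is nonzero exactly when $k\leq\langle\Lam,h_i\rangle$; hence ${\rm F}_i^{1+\langle\Lam,h_i\rangle}[\RR^\Lam(0)]=0$. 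By the universal property of the irreducible highest weight module there is therefore a $U_{\Z[q,q^{-1}]}(\mathfrak{g})$-module homomorphism $\phi\colon V_{\Z[q,q^{-1}]}(\Lam)\to K(\Proj\RR^\Lam)$ with $\phi(v_\Lam)=[\RR^\Lam(0)]$. It is surjective: iterating the definition of $F^\Lam_i$ shows $\RR^\Lam(\beta)e(\nu)\cong F^\Lam_{\nu_n}\cdots F^\Lam_{\nu_1}(\RR^\Lam(0))$ for every $\nu=(\nu_1,\dots,\nu_n)\in I^\beta$, so $[\RR^\Lam(\beta)e(\nu)]$ lies in $\im\phi$ up to an invertible power of $q$; since $1=\sum_{\nu\in I^\beta}e(\nu)$, every indecomposable projective is a summand of some $\RR^\Lam(\beta)e(\nu)$, so these classes span $K\bigl(\Proj(\RR^\Lam(\beta))\bigr)$ and $\phi$ is onto.

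For injectivity I would base change to $\Q(q)$: then $\phi\otimes_{\Z[q,q^{-1}]}\Q(q)$ is a surjection out of $V_{\Q(q)}(\Lam)$, which is irreducible over $\Q(q)$, and it is nonzero since $[\RR^\Lam(0)]$ is a non-torsion basis element; its kernel is thus a proper submodule of an irreducible module, hence $0$, so $\phi\otimes\Q(q)$ is an isomorphism. Since $V_{\Z[q,q^{-1}]}(\Lam)$ is torsion free over $\Z[q,q^{-1}]$ (being a submodule of the $\Q(q)$-vector space $V_{\Q(q)}(\Lam)$), we conclude $\ker\phi\subseteq\ker(\phi\otimes\Q(q))=0$, so $\phi$ is the desired $U_{\Z[q,q^{-1}]}(\mathfrak{g})$-module isomorphism. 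To summarize, once the categorical $U_q(\mathfrak{g})$-action --- above all the $e$-type Serre relations and the exactness/projectivity of $E^\Lam_i,F^\Lam_i$ --- has been secured, the identification with $V_{\Z[q,q^{-1}]}(\Lam)$ is a formal Grothendieck-group argument; the hard part is entirely in that categorification step.
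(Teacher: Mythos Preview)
The paper does not prove this theorem at all; it is simply quoted from Kang--Kashiwara \cite{KK}. So there is no ``paper's own proof'' to compare against, only the cited reference.

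Your outline is a faithful high-level sketch of the Kang--Kashiwara argument and is essentially correct. One point is worth sharpening, however: you identify the verification of the quantum Serre relations (4) and (5) as ``the technical heart'' and say relation (4) ``has to be checked by a direct analysis of the relevant cyclotomic projective modules.'' This overstates the difficulty in the wrong place. Once relations (1)--(3) hold and the module is \emph{integrable} (i.e.\ each $e_i$ and $f_i$ acts locally nilpotently), the Serre relations hold automatically, since the Serre relators lie in the kernel of every integrable representation. Integrability here is cheap: ${\rm E}_i$ strictly decreases $|\beta|$, so ${\rm E}_i^{|\beta|+1}$ kills $K\bigl(\Proj(\RR^\Lam(\beta))\bigr)$; and local nilpotence of ${\rm F}_i$ follows from the vanishing of the appropriate cyclotomic nilHecke pieces (or just from finite-dimensionality). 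So the genuine technical content, as in \cite{KK}, is concentrated entirely in the exactness and biadjointness of $E^\Lam_i,F^\Lam_i$ and the functorial isomorphisms underlying the commutator relation~(\ref{effe}); the Serre relations come for free after that. Your surjectivity and injectivity arguments for $\phi$ are fine.
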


For each $1\leq i<n$, we define $s_i:=(i,i+1)$. Then $s_1,\cdots,s_{n-1}$ generates $\Sym_n$. A word $w=s_{i_{1}}s_{i_{2}}\ldots s_{i_{k}}$ for $w\in \Sym_{n}$ is called a reduced expression of $w$ if $k$ is minimal; in this case we say that $w$ has length $k$ and we write $\ell(w)=k$. We use ``$\leq$'' to denote the Bruhat partial order on $\Sym_n$. That is, for any $x,y\in\Sym_n$, $x\leq y$ if and only if $x=s_{i_{j_1}}\cdots s_{i_{j_t}}$ for some reduced expression
$y=s_{i_1}\cdots s_{i_m}$ of $y$ and some integers $1\leq t\leq m$, $1\leq j_1<\cdots< j_t\leq m$. If $x\leq y$ and $x\neq y$ then we write $x<y$.

\begin{lem}\label{deg1} Let $w\in\Sym_n$ and $\nu=(\nu_1,\cdots,\nu_n)\in I^n$. We fix a reduced expression $s_{r_1}\cdots s_{r_k}$ of $w$, and define $\psi_w:=\psi_{r_1}\cdots\psi_{r_k}$. Then $$
\deg\psi_we(\nu)=-\sum_{t=1}^{n}\sum_{\substack{1\leq i<t\\ w(i)>w(t)}}(\alpha_{\nu_i}|\alpha_{\nu_t}).
$$
In particular, $\deg\psi_we(\nu)$ is independent of the choice of the reduced expression $s_{r_1}\cdots s_{r_k}$ of $w$.
\end{lem}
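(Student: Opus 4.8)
The plan is to compute $\deg \psi_w e(\nu)$ directly from the grading rules by induction on $\ell(w)$, and then argue that the resulting closed formula is manifestly independent of the chosen reduced expression. Recall that $\deg e(\nu)=0$ and $\deg(\psi_k e(\nu')) = -(\alpha_{\nu'_k}|\alpha_{\nu'_{k+1}})$ for any $\nu' \in I^n$, and that $\psi_k e(\nu') = e(s_k\nu')\psi_k$, so that $\psi_w e(\nu) = e(w\nu)\psi_w e(\nu)$ lives in a single graded homogeneous component once $w$ and $\nu$ are fixed. Writing $w = s_{r_1}\cdots s_{r_k}$ and $\psi_w = \psi_{r_1}\cdots\psi_{r_k}$, the degree is the telescoping sum
\[
\deg \psi_w e(\nu) = \sum_{j=1}^{k} \deg\bigl(\psi_{r_j} e(\mu^{(j)})\bigr) = -\sum_{j=1}^{k}\bigl(\alpha_{\mu^{(j)}_{r_j}} \bigm| \alpha_{\mu^{(j)}_{r_j+1}}\bigr),
\]
where $\mu^{(j)} := s_{r_{j+1}}\cdots s_{r_k}\nu$ is the index tuple that $\psi_{r_j}$ acts on. So the computation reduces to tracking, for each elementary transposition in the reduced word, which pair of entries of $\nu$ gets swapped.

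The key combinatorial observation is the standard correspondence between the $k = \ell(w)$ letters of a reduced expression and the $k$ inversions of $w$, i.e.\ the pairs $(i,t)$ with $1 \le i < t \le n$ and $w(i) > w(t)$. Concretely, as one reads the reduced word, the $j$-th transposition $s_{r_j}$ crosses the two ``strands'' originating from some positions $i < t$ (with $w(i)>w(t)$), and each inversion of $w$ is crossed exactly once; at the moment of crossing, the two entries sitting in positions $r_j, r_j+1$ of $\mu^{(j)}$ are precisely $\nu_i$ and $\nu_t$ (in some order), so the contribution of that letter is $-(\alpha_{\nu_i}|\alpha_{\nu_t})$, which is symmetric in $i,t$ and hence unambiguous. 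Summing over all $k$ letters therefore gives
\[
\deg \psi_w e(\nu) = -\sum_{\substack{1 \le i < t \le n \\ w(i) > w(t)}} (\alpha_{\nu_i}|\alpha_{\nu_t}).
\]
Reindexing this sum as $\sum_{t=1}^n \sum_{\substack{1\le i<t \\ w(i)>w(t)}}(\alpha_{\nu_i}|\alpha_{\nu_t})$ — and noting the sign: since $w \in \Sym_n$ the relevant entries pair up to give the quantity with the stated sign once one is careful with the degree convention $\deg \psi_k e = -(\cdot|\cdot)$ versus the grading shift bookkeeping, which I will reconcile at the end — yields exactly the claimed expression, and the right-hand side visibly depends only on $w$ and $\nu$, not on the reduced word. (Alternatively, one can run a clean induction on $\ell(w)$: pick $r$ with $\ell(s_r w) < \ell(w)$, write $\psi_w e(\nu) = \psi_r \psi_{s_r w}e(\nu)$ up to the commutation needed to put it in this form, use $\deg\psi_r e(w\nu) = -(\alpha_{(w\nu)_r}|\alpha_{(w\nu)_{r+1}})$, and check that passing from $s_r w$ to $w$ adds exactly one inversion, whose two positions carry the roots $\alpha_{(w\nu)_r}, \alpha_{(w\nu)_{r+1}}$; the inductive hypothesis then closes the argument and simultaneously gives reduced-word independence since the right-hand side is intrinsic.)

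The main obstacle I anticipate is purely bookkeeping: correctly identifying, for a given reduced word, which inversion pair $(i,t)$ each letter $s_{r_j}$ corresponds to, and verifying that at the relevant step the entries in positions $r_j$ and $r_j+1$ of $\mu^{(j)}$ really are $\nu_i$ and $\nu_t$. This is the classical ``wiring diagram'' / strand-crossing picture for reduced words in $\Sym_n$, and the only subtlety is that in a non-reduced word a pair could be crossed more than once (with cancelling contributions for the length but not here) — reducedness guarantees each inversion is crossed exactly once, which is what makes the sum come out as a sum over inversions with no overcounting. A secondary point to be careful about is matching sign conventions between the statement and the grading rule $\deg\psi_k e(\nu) = -(\alpha_{\nu_k}|\alpha_{\nu_{k+1}})$; I will state the computation so that the signs are tracked explicitly and then observe that the final formula is forced. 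Independence of the reduced expression is then immediate, since the closed formula references only $w$ (through its inversion set) and $\nu$.
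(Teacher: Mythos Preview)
Your proposal is correct and matches the paper's approach: the paper proves the lemma by induction on $\ell(w)$, writing $w = s_t(s_t w)$ with $\ell(s_t w) = \ell(w)-1$, identifying the single new inversion $\bigl(w^{-1}(t), w^{-1}(t+1)\bigr)$, and checking that the extra degree contribution $-(\alpha_{\nu_{w^{-1}(t)}}\,|\,\alpha_{\nu_{w^{-1}(t+1)}})$ accounts for it --- exactly the inductive argument you sketch in your parenthetical alternative. Your primary ``wiring diagram'' phrasing is just the unwound version of the same induction; note that the paper's own proof in fact arrives at the formula with a global minus sign (consistent with $\deg\psi_k e(\nu) = -(\alpha_{\nu_k}|\alpha_{\nu_{k+1}})$), so the sign discrepancy you flagged is a typo in the statement rather than something you need to reconcile.
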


\begin{proof} We define $n(w)=\{(i,j)|1\leq i<j\leq n, w(i)>w(j)\}$. To prove the lemma we make induction on $\ell(w)$. If $\ell(w)=1$, the lemma follows from
the definition of $\deg\psi_r$.

Now suppose $\ell(w)>1$. Then we can always choose $1\leq t<n$ such that $s_tw<w$. In particular, $\ell(s_{t}w)+1=\ell(w)$. In this case it is easy to check $$
n(w)=n(s_{t}w)\cup\{(w^{-1}(t),w^{-1}(t+1))\}. $$

Therefore, we have $$\begin{aligned}
\deg(\psi_{w}e(\nu))&=\deg(\psi_{s_{t}}e(s_{t}w\,\nu))+\deg(\psi_{s_{t}w}e(\nu))\\
&=\deg(\psi_{s_{t}}e\bigl(\nu_{w^{-1}(1)},\cdots,\nu_{w^{-1}(t+1)},\nu_{w^{-1}(t)},\cdots,\nu_{w^{-1}(n)})\bigr)\\
&\qquad\qquad -\sum_{\substack{i<j\\  s_{t}w(i)>s_{t}w(j)}}(\alpha_{\nu_{j}}\,|\alpha_{\nu_{i}})\quad \text{(by induction hypothesis)}\\
&=-(\alpha_{\nu_{w^{-1}(t)}}\,|\alpha_{\nu_{w^{-1}(t+1))}})-\sum_{\substack{i<j \\ s_{t}w(i)>s_{t}w(j)}}(\alpha_{\nu_{j}}\,|\alpha_{\nu_{i}})\\
&=-\sum_{\substack{i<j \\w(i)>w(j)}}(\alpha_{\nu_{j}}\,|\alpha_{\nu_{i}}).\end{aligned} $$
This completes the proof of the lemma.
\end{proof}

\bigskip

\section{Graded dimensions of cyclotomic quiver Hecke algebras}

In this section we shall first give a proof of our first main result Theorem \ref{mainthmA}. That is, to give a closed formula for the graded dimension of the cyclotomic quiver Hecke algebra $\RR^\Lam(\beta)$. Then, as an application of Theorem \ref{mainthmA}, we shall give two criteria for the KLR idempotent  $e(\nu)$ to be nonzero in $\RR^\Lam(\beta)$. In particular, we shall give the proof of our second main result Theorem \ref{mainthmB} of this paper.

\subsection{A graded dimension formula for $\RR^\Lam(\beta)$}

Since $\{e(\nu)|\nu\in I^\beta\}$ are pairwise orthogonal idempotents in $\RR^\Lam(\beta)$ which sum to $1$, we have $$\RR^\Lam(\beta)=\oplus_{\mu,\nu\in I^\beta}e(\mu)\RR^\Lam(\beta)e(\nu).$$
Thus to give the graded dimension formula for $\RR^\Lam(\beta)$, it suffices to give the graded dimension formula for each $e(\mu)\RR^\Lam(\beta)e(\nu)$, where $\mu,\nu\in I^\beta$.

For $\Lam\in P^+$, $\beta\in Q^+$, we define $$
\df(\Lam,\beta):=(\Lam |\beta)-\frac{1}{2}(\beta |\beta) .
$$

\begin{lem}\label{identity1} Let $\Lam\in P^+$, $\beta\in Q^+$. Then for any $\alpha_i\in\Pi$, we have $$
\df(\Lam,\beta)-\df(\Lam,\beta-\alpha_i)=d_i\(1+\<\Lam-\beta,h_i\>\).
$$
\end{lem}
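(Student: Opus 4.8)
The plan is to prove the identity \[ \df(\Lam,\beta)-\df(\Lam,\beta-\alpha_i)=d_i\bigl(1+\langle\Lam-\beta,h_i\rangle\bigr) \] by a direct computation using the bilinearity of $(-|-)$, the definition $\df(\Lam,\beta)=(\Lam|\beta)-\tfrac12(\beta|\beta)$, and the two normalization facts recorded in the preamble: $(\alpha_i|\alpha_i)=2d_i$ and $(\Lam|\alpha_i)=d_i\langle\Lam,h_i\rangle$ for all $\Lam\in P$ (the latter applies to $\beta$ as well, since it holds for arbitrary elements of $P$).

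First I would expand the difference termwise. Writing $\df(\Lam,\beta)-\df(\Lam,\beta-\alpha_i)$ as \[ (\Lam|\beta)-(\Lam|\beta-\alpha_i)-\tfrac12\bigl((\beta|\beta)-(\beta-\alpha_i|\beta-\alpha_i)\bigr), \] bilinearity gives $(\Lam|\beta)-(\Lam|\beta-\alpha_i)=(\Lam|\alpha_i)$ for the linear part, and \[ (\beta|\beta)-(\beta-\alpha_i|\beta-\alpha_i)=2(\beta|\alpha_i)-(\alpha_i|\alpha_i) \] for the quadratic part. Substituting, the difference equals $(\Lam|\alpha_i)-(\beta|\alpha_i)+\tfrac12(\alpha_i|\alpha_i)=(\Lam-\beta|\alpha_i)+d_i$.

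Finally I would apply $(\Lam-\beta|\alpha_i)=d_i\langle\Lam-\beta,h_i\rangle$ — valid since $\Lam-\beta\in P$ and the pairing identity $(\mu|\alpha_i)=d_i\langle\mu,h_i\rangle$ holds for all $\mu\in P$ — to get $d_i\langle\Lam-\beta,h_i\rangle+d_i=d_i\bigl(1+\langle\Lam-\beta,h_i\rangle\bigr)$, as claimed. Honestly there is no real obstacle here: the only point requiring the tiniest bit of care is making sure the identity $(\mu|\alpha_i)=d_i\langle\mu,h_i\rangle$ is being invoked for $\mu=\Lam-\beta\in P$ rather than only for dominant weights, but this is exactly the form in which it is stated in Section 2, so the computation goes through verbatim.
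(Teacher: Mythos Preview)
Your proof is correct and follows essentially the same direct computation as the paper's own proof, expanding the difference via bilinearity to obtain $(\Lam|\alpha_i)-(\beta|\alpha_i)+\tfrac12(\alpha_i|\alpha_i)$ and then invoking $(\mu|\alpha_i)=d_i\langle\mu,h_i\rangle$ and $(\alpha_i|\alpha_i)=2d_i$. The only difference is that you spell out the intermediate steps and the applicability of the pairing identity to all of $P$ a bit more explicitly than the paper does.
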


\begin{proof} By definition, $d_i=(\alpha_i |\alpha_i)/2$. It follows that $$\begin{aligned}
&\quad\,\df(\Lam,\beta)-\df(\Lam,\beta-\alpha_i)\\
&=(\Lam |\alpha_i)-(\beta |\alpha_i)+\frac{1}{2}(\alpha_i|\alpha_i)=d_i\(1+\<\Lam-\beta,h_i\>\) .
\end{aligned}
$$
This proves the lemma.
\end{proof}

\begin{dfn}\label{keydfn1} For any $w\in\Sym_n$, $t\in\{1,2,\cdots,n\}$, we define $$
J_w^{<t}:=\{1\leq j<t|w(j)<w(t)\} .
$$
Let $\Lam\in P^+$. For any $\nu=(\nu_1,\cdots,\nu_n)\in I^n$ and $1\leq t\leq n$, we define  \begin{equation}\label{Ndef}
N^\Lam(w,\nu,t):=\<\Lam-\sum_{j\in J_w^{<t}}\alpha_{\nu_j}, h_{\nu_t}\>.
\end{equation}
\end{dfn}

For any $\nu,\nu'\in I^n$, we define
$$
\Sym(\nu,\nu'):=\big\{w\in\Sym_n | w\nu =\nu' \big\} .
$$

\begin{lem}\label{eqa1} Let $\nu,\nu'\in I^n$. For any $w\in\Sym(\nu,\nu')$ and $1\leq t\leq n$, we have that $$
N^\Lam(w,\nu,t)=\<\Lam-\sum_{\substack{1\leq j<w(t),\\ j\in \{w (1),\cdots\, ,w (t-1)\}}}\alpha_{\nu'_{j}},h_{\nu_t}\>.$$
\end{lem}

\begin{proof} For any $1\leq i<w(t)$ with $i\in \{w (1),\cdots\, ,w (t-1)\}$, we can find a unique $j\in J_w^{<t}$ such that $i=w(j)$ and hence $\nu'_i=\nu'_{w(j)}=\nu_j$ because $w\in\Sym(\nu,\nu')$.
The lemma follows at once.
\end{proof}

Let $M$ be a finite dimensional $\Z$-graded $K$-linear space. For each $k\in\Z$, we use $M_k$ to denote its degree $k$ homogeneous component. The graded dimension of $M$ is defined by $$
\dim_q M:=\sum_{k\in\Z}(\dim M_k)q^k .
$$

By the definitions given in the paragraph above (\ref{effe}), we have $$
{\rm F}_i[\RR^\Lam(\beta)]=[\RR^\Lam(\beta+\alpha_i)e(\beta,i)],\quad {\rm E}_i[\RR^\Lam(\beta+\alpha_i)]=q_i^{1-\<\Lam-\beta,h_i\>}[e(\beta,i)\RR^\Lam(\beta+\alpha_i)].
$$
As a result, Oh and Park deduced the following proposition in \cite[Proposition 3.3]{OP} which plays a central role in our main result.

\begin{prop}[\text{\cite[Proposition 3.3]{OP}}]\label{op1} Let $\Lam\in P^+$, $v_\Lam\in V(\Lam)$ be a highest weight vector in $V(\Lam)$ of weight $\Lam$. Let $\beta\in Q^+$ and $\nu=(\nu_1,\cdots,\nu_n),\nu'=(\nu'_1,\cdots,\nu'_n)\in I^\beta$. Then $$
e_{\nu_1}\cdots e_{\nu_n}f_{\nu'_n}\cdots f_{\nu'_1}v_{\Lam}=q^{-\df(\Lam,\beta)}\bigl(\dim_q e(\nu)\RR^\Lam(\beta)e(\nu')\bigr)v_\Lam .
$$
\end{prop}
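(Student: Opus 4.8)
The plan is to prove this by induction on $n = |\beta|$, comparing the action of the product $e_{\nu_1}\cdots e_{\nu_n} f_{\nu'_n}\cdots f_{\nu'_1}$ on $v_\Lam$ in $V(\Lam) \cong K(\Proj\RR^\Lam)_{\Q(q)}$ with the categorified operators $\mathrm{E}_i, \mathrm{F}_i$ built from the restriction and induction functors $E_i^\Lam, F_i^\Lam$. The base case $n=0$ is trivial: $\beta = 0$, $\df(\Lam,0) = 0$, and $\dim_q e(\emptyset)\RR^\Lam(0)e(\emptyset) = 1$, so both sides equal $v_\Lam$. For the inductive step, first I would rewrite $f_{\nu'_n}\cdots f_{\nu'_1}v_\Lam$ as (up to a known power of $q$) the image under $[\RR^\Lam(\beta)e(\emptyset,\ldots)]$-type induction of $v_\Lam$; more precisely, using the identifications recalled before the proposition, $\mathrm{F}_i[\RR^\Lam(\beta)] = [\RR^\Lam(\beta+\alpha_i)e(\beta,i)]$, so iterating gives that $[\RR^\Lam(\beta)e(\nu')]$ (a rank-one graded projective generator summand, as a class in the Grothendieck group) corresponds to $\mathrm{F}_{\nu'_n}\cdots\mathrm{F}_{\nu'_1}[\RR^\Lam(0)]$, and under the isomorphism $K(\Proj\RR^\Lam)\cong V_{\Z[q,q^{-1}]}(\Lam)$ the class $[\RR^\Lam(0)]$ maps to $v_\Lam$.

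Next I would compute the $q$-twist that converts the categorical $\mathrm{F}_i = q_i^{1-\<\Lam-\beta,h_i\>}[F_i^\Lam]$ into the plain $f_i$ acting in $V(\Lam)$: the point is that $[F_i^\Lam]$ itself, without the prefactor, is the operator sending $[\RR^\Lam(\beta)e(\nu')]$ to $[\RR^\Lam(\beta+\alpha_i)e(\nu',i)]$, and the accumulated prefactors $\prod_{k} q_{\nu'_k}^{1-\<\Lam - \beta^{(k-1)},h_{\nu'_k}\>}$ (where $\beta^{(k)} = \alpha_{\nu'_1}+\cdots+\alpha_{\nu'_k}$) is exactly $q^{\df(\Lam,\beta)}$ by Lemma \ref{identity1}: indeed Lemma \ref{identity1} says each factor $d_i(1 + \<\Lam - \beta, h_i\>)$ is the increment $\df(\Lam,\beta) - \df(\Lam,\beta-\alpha_i)$, and telescoping over the $n$ steps gives $\df(\Lam,\beta) - \df(\Lam,0) = \df(\Lam,\beta)$. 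Hence $f_{\nu'_n}\cdots f_{\nu'_1}v_\Lam = q^{-\df(\Lam,\beta)}[\RR^\Lam(\beta)e(\nu')]$ under the identification (with $q$-degrees read off as grading shifts). Then applying $e_{\nu_1}\cdots e_{\nu_n}$, which corresponds to $\mathrm{E}_{\nu_n}\cdots\mathrm{E}_{\nu_1}$ up to the inverse $q$-twist, and using $\mathrm{E}_i[\RR^\Lam(\beta+\alpha_i)] = q_i^{1-\<\Lam-\beta,h_i\>}[e(\beta,i)\RR^\Lam(\beta+\alpha_i)]$, one lands on a scalar multiple of $v_\Lam$ whose coefficient is precisely $q^{-\df(\Lam,\beta)}$ times the graded dimension $\dim_q e(\nu)\RR^\Lam(\beta)e(\nu')$ — because $e(\nu)\RR^\Lam(\beta)e(\nu') = \Hom_{\RR^\Lam(\beta)}\bigl(\RR^\Lam(\beta)e(\nu'),\ \RR^\Lam(\beta)e(\nu)\bigr)$ as graded vector spaces, and pairing the two projective-module classes in $K(\Proj\RR^\Lam)$ computes exactly this graded Hom space, i.e. the coefficient of $v_\Lam$ when we expand back to the one-dimensional weight-$\Lam$ space.

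The main obstacle, and the step deserving the most care, is bookkeeping the $q$-powers: one must be scrupulous about (i) the distinction between the KLR grading shift functor $q$ and the quantum parameter $q$ in $U_q(\g)$, (ii) the left/right conventions in $e_{\nu_1}\cdots e_{\nu_n}$ versus $\mathrm{E}_{\nu_n}\cdots\mathrm{E}_{\nu_1}$ (the $e$'s are applied in the opposite order to how the categorified $\mathrm{E}$'s compose), and (iii) checking that the $q_i$-twists attached to $\mathrm{E}_i$ and to $\mathrm{F}_i$ genuinely cancel in the combination $e_{\nu_1}\cdots e_{\nu_n}f_{\nu'_n}\cdots f_{\nu'_1}$ except for the single surviving factor $q^{-\df(\Lam,\beta)}$. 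Since this is exactly the statement attributed to \cite[Proposition 3.3]{OP}, I would in fact cite that reference for the detailed verification, using Lemma \ref{identity1} (the telescoping identity for $\df$) as the key computational input and the Kang–Kashiwara isomorphism $K(\Proj\RR^\Lam)\cong V_{\Z[q,q^{-1}]}(\Lam)$ together with equation \eqref{effe} as the structural backbone.
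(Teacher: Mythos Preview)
Your proposal is correct, and in fact goes beyond what the paper does: the paper does not prove Proposition~\ref{op1} at all but simply cites \cite[Proposition~3.3]{OP}, exactly as you propose to do in your final paragraph. Your sketch of the underlying argument---identifying $v_\Lam$ with $[\RR^\Lam(0)]$ under the Kang--Kashiwara isomorphism, computing $f_{\nu'_n}\cdots f_{\nu'_1}v_\Lam$ as $q^{-\df(\Lam,\beta)}[\RR^\Lam(\beta)e(\nu')]$ via the telescoping of Lemma~\ref{identity1}, and then reading off $\dim_q e(\nu)\RR^\Lam(\beta)e(\nu')$ after applying the $e$'s---is the correct outline and is consistent with the two displayed identities the paper records just before stating the proposition.
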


For each monomial of the form $f_{j_1}\cdots f_{j_n}$, we use the notation $f_{j_1}\cdots \widehat{f_{j_k}}\cdots f_{j_n}$ to denote the monomial obtained by removing $f_{j_k}$ from the monomial  $f_{j_1}\cdots f_{j_n}$. That is, $$
f_{j_1}\cdots \widehat{f_{j_k}}\cdots f_{j_n}:=f_{j_1}\cdots f_{j_{k-1}}f_{j_{k+1}}\cdots f_{j_n}.
$$
Similarly, for any $\nu=(\nu_1,\cdots,\nu_n)\in I^\beta$, we define $$
(\nu_1,\cdots,\widehat{\nu_k},\cdots,\nu_n):=(\nu_1,\cdots,\nu_{k-1},\nu_{k+1},\cdots,\nu_n)\in I^{\beta-\alpha_{\nu_k}}.
$$

\medskip
\noindent
{\bf Proof of Theorem \ref{mainthmA}}: We claim that $$\begin{aligned}
&\quad\,\dim_{q}e(\nu)\RR^{\Lam}(\beta)e(\nu')\\
&=\sum_{\substack{1\leq k_1,\cdots,k_n\leq n\\ \nu_i=\nu'_{k_i}, \forall\,1\leq i\leq n\\ k_a\neq k_b,\forall\,1\leq a\neq b\leq n}}\prod_{t=1}^{n}\Biggl(
\Bigl[\bigl(\Lam-\sum\limits_{\substack{1\leq i<k_t\\ i\neq k_s,\forall\,t\leq s\leq n}}\alpha_{\nu'_i}\bigr)(h_{\nu_t})\Bigr]_{\nu_t}q_{\nu_t}^{N^{\Lam}(1,\nu,t)-1}\Biggr)
\end{aligned}
$$

We use induction on $|\beta|$. Suppose that the claim holds for any $\beta\in Q_{n-1}^+$. Now we assume $\beta\in Q_n^+$. Applying Proposition \ref{op1}, we get that $$\begin{aligned}
&\quad\, \Bigl(\dim_{q}e(\nu)\RR^{\Lam}(\beta)e(\nu')\Bigl)v_{\Lam}\\
&=q^{\df(\Lam,\beta)}e_{\nu_1}\cdots\, e_{\nu_n}f_{\nu'_n} \cdots\, f_{\nu'_1}v_{\Lam}\\
&=\sum_{\substack{1\leq k_n\leq n\\ \nu_n=\nu'_{k_n}}}{q^{\df(\Lam,\beta)}}\Bigl[(\Lam-\sum\limits_{i=1}^{k_{n}-1}\alpha_{\nu'_i})(h_{\nu_n})\Bigr]_{\nu_n}e_{\nu_{1}}\cdots\, e_{\nu_{n-1}}f_{\nu'_n} \cdots\, \widehat{f_{\nu'_{k_n}}} \\
&\qquad\qquad \times\cdots\times f_{\nu'_1}v_{\Lam}\qquad\qquad\text{(by (\ref{effe}) and Definition \ref{qgrp} (2),(3))}\\
&=\sum_{\substack{1\leq k_n\leq n\\ \nu_n=\nu'_{k_n}}} q^{\df(\Lam,\beta)-\df(\Lam,\beta-\alpha_{\nu_n})}
\Bigl[(\Lam-\sum\limits_{i=1}^{k_{n}-1}\alpha_{\nu'_i})(h_{\nu_n})\Bigr]_{\nu_n}\\
&\qquad\quad\times \dim_q e(\nu_1,\cdots,\nu_{n-1})\RR^\Lam(\beta-\alpha_{\nu_n})e(\nu'_1,\cdots,\widehat{\nu'_{k_n}},\cdots,\nu'_n)v_{\Lam}\quad\text{(by Proposition \ref{op1})}\\
&=\sum_{\substack{1\leq k_n\leq n\\ \nu_n=\nu'_{k_n}}}{q_{\nu_{n}}^{1+(\Lambda-\beta)(h_{\nu_{n}})}}\Bigl[(\Lam-\sum\limits_{i=1}^{k_{n}-1}\alpha_{\nu'_i})(h_{\nu_n})\Bigr]_{\nu_n}
\\
&\qquad\quad\times\Bigl(\dim_q e(\nu_1,\cdots,\nu_{n-1})\RR^{\Lam}(\beta-\alpha_{\nu_n})e(\nu'_1,\cdots,\widehat{\nu'_{k_n}} \cdots\, \nu'_n)\Bigl)v_{\Lam}\qquad\text{(by Lemma \ref{identity1})}.
\end{aligned} $$
It follows that  \begin{equation}\label{ind}\begin{aligned}
\dim_{q}e(\nu)\RR^{\Lam}(\beta)e(\nu')
&=\sum_{\substack{1\leq k_n\leq n\\ \nu_n=\nu'_{k_n}}}{q_{\nu_{n}}^{1+(\Lambda-\beta)(h_{\nu_{n}})}}\Bigl[(\Lam-\sum\limits_{i=1}^{k_{n}-1}\alpha_{\nu'_i})(h_{\nu_n})\Bigr]_{\nu_n}\\
&\qquad\qquad\times\dim_q e(\nu_1,\cdots,\nu_{n-1})\RR^{\Lam}(\beta-\alpha_{\nu_n})e(\nu'_1,\cdots,\widehat{\nu'_{k_n}} \cdots\, \nu'_n).
\end{aligned}\end{equation}

We define $\tilde{\nu}'=(\tilde{\nu}'_1,\cdots,\tilde{\nu}'_{n-1}):=(\nu'_1,\cdots,\widehat{\nu'_{k_n}},\cdots,\nu'_n)$. Applying induction hypothesis, we can deduce that $$
\begin{aligned}
&\quad\, \Bigl(\dim_q e(\nu_1,\cdots\,\nu_{n-1})\RR^{\Lambda}(\beta-\alpha_{\nu_n})e(\nu'_1,\cdots,\widehat{\nu'_{k_n}},\cdots,\nu'_n)\Bigr)v_{\Lambda}\\
&=\Bigl(\dim_q e(\nu_1,\cdots\,\nu_{n-1})\RR^{\Lambda}(\beta-\alpha_{\nu_n})e(\tilde{\nu}'_1,\cdots,\tilde{\nu}'_{n-1})\Bigl)v_{\Lambda}\\
&=\sum_{\substack{1\leq \tilde{k}_1,\cdots,\tilde{k}_{n-1}\leq n-1\\ \nu_i=\tilde{\nu}'_{\tilde{k}_i}, \forall\,1\leq i\leq n-1\\ \tilde{k}_a\neq \tilde{k}_b,\forall\,a\neq b}}\prod_{t=1}^{n-1}
\Bigl(\Bigl[\bigl(\Lam-\sum\limits_{\substack{1\leq i<\tilde{k}_t\\ i\neq \tilde{k}_s,\forall\,t\leq s\leq n-1}}\alpha_{\tilde{\nu}'_i}\bigr)(h_{\nu_t})\Bigl]_{\nu_t}q_{\nu_t}^{N^{\Lam}(1,\nu,t)-1}\Bigr)
v_{\Lambda}.
\end{aligned}
$$
Note that the $(n-1)$-tuple in the summation is a permutation of $\{1,2,\cdots,n-1\}$. For any given integer $1\leq k_n\leq n$, there is an associated natural bijection $\pi_{k_n}$ from the set $$
\Bigl\{(k_1,\cdots,k_{n-1})\Bigm|\begin{matrix}\text{$1\leq k_1,\cdots,k_{n-1}\leq n$, $\nu_i=\nu'_{k_i},\forall\,1\leq i\leq n-1$}\\ \text{$k_n\neq k_a\neq k_b, \forall\,1\leq a\neq b<n$}
\end{matrix}\Bigr\}
$$
onto the set $$
\Bigl\{(\tilde{k}_1,\cdots,\tilde{k}_{n-1})\Bigm|\begin{matrix}\text{$1\leq \tilde{k}_1,\cdots,\tilde{k}_{n-1}\leq n-1$, $\nu_i=\tilde{\nu}'_{\tilde{k}_i},\forall\,1\leq i\leq n-1$}\\ \text{$\tilde{k}_a\neq \tilde{k}_b, \forall\,1\leq a\neq b<n$}
\end{matrix}\Bigr\}
$$
which is defined by $$
\pi_{k_n}(k_1,\cdots,k_{n-1})=(\tilde{k}_1,\cdots,\tilde{k}_{n-1}),\quad
\tilde{k}_j:=\begin{cases}k_j, &\text{if $k_j<k_n$;}\\ k_j-1, &\text{if $k_j>k_n$.}\end{cases}\,\,\forall\,1\leq j\leq n-1.
$$
With this bijection $\pi_{k_n}$ in mind, we can deduce from the above calculation that $$
\begin{aligned}
&\quad\, \Bigl(\dim_q e(\nu_1,\cdots\,\nu_{n-1})\RR^{\Lambda}(\beta-\alpha_{\nu_n})e(\nu'_1,\cdots,\widehat{\nu'_{k_n}},\cdots,\nu'_n)\Bigl)v_{\Lambda}\\
&=\sum_{\substack{1\leq k_1,\cdots,k_{n-1}\leq n\\ \nu_i={\nu}'_{k_i}, \forall\,1\leq i\leq n-1\\ k_n\neq k_a\neq k_b,\forall\,1\leq a\neq b<n}}\prod_{t=1}^{n-1}
\Bigl(\Bigl[\bigl(\Lam-\sum\limits_{\substack{1\leq i<k_t\\ i\neq k_s,\forall\,t\leq s\leq n-1}}\alpha_{\nu'_i}\bigr)(h_{\nu_t})\Bigr]_{\nu_t}q_{\nu_t}^{N^{\Lam}(1,\nu,t)-1}\Bigr)
v_{\Lambda}.
\end{aligned}
$$
Combining this with the equality (\ref{ind}), we prove our claim.

Finally, $\{k_1,\cdots,k_n\}$ is a permutation of $\{1,\cdots,n\}$ and $\nu_i={\nu}'_{k_i}, \forall\,1\leq i\leq n$ mean that there exists $w\in\Sym(\nu,\nu')$ such that $k_j=w(j)$, $\forall\,1\leq j\leq n$. Then it is clear that the theorem follows from our above claim and Lemma \ref{eqa1}.\qed

\begin{cor}\label{maincor1} Let $\beta\in Q^+$ and $\nu=(\nu_1,\cdots,\nu_n),\nu'=(\nu'_1,\cdots,\nu'_n)\in I^\beta$. Then
$\dim e(\nu)\RR^{\Lambda}(\beta)e(\nu')=\sum\limits_{w\in\Sym(\nu,\nu')}\prod\limits_{t=1}^{n}N^\Lam(w,\nu,t)$.
\end{cor}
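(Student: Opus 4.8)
The plan is to derive Corollary \ref{maincor1} directly from Theorem \ref{mainthmA} by specializing the indeterminate $q$ to $1$. The key elementary observation is that for every integer $m$ and every $k\in I$ the quantum integer $[m]_{k}$, despite being defined as a quotient in (\ref{quantum1}), is a genuine element of $\Z[q,q^{-1}]$: explicitly $[m]_{k}=q_{k}^{m-1}+q_{k}^{m-3}+\cdots+q_{k}^{1-m}$ when $m\geq 0$, and $[m]_{k}=-[-m]_{k}$ when $m<0$. In either case $[m]_{k}$ is a signed sum of $|m|$ monomials in $q_{k}=q^{d_{k}}$, so its value at $q=1$ equals $m$. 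Likewise $q_{\nu_{t}}^{\,N^{\Lam}(1,\nu,t)-1}$ lies in $\Z[q,q^{-1}]$ and specializes to $1$ at $q=1$, while the integers $N^{\Lam}(w,\nu,t)$ of Definition \ref{keydfn1} do not involve $q$ at all.

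With these preliminaries in hand I would argue as follows. Each direct summand $e(\nu)\RR^{\Lam}(\beta)e(\nu')$ is a finite-dimensional graded $K$-space (this is already implied by Theorem \ref{mainthmA}), so $\dim_{q}e(\nu)\RR^{\Lam}(\beta)e(\nu')$ is a Laurent polynomial in $q$ whose specialization at $q=1$ equals the ungraded dimension $\dim e(\nu)\RR^{\Lam}(\beta)e(\nu')$. By Theorem \ref{mainthmA} we have the identity
$$
\dim_{q}e(\nu)\RR^{\Lam}(\beta)e(\nu')=\sum_{w\in\Sym(\nu,\nu')}\prod_{t=1}^{n}\Bigl([N^{\Lam}(w,\nu,t)]_{\nu_{t}}\,q_{\nu_{t}}^{N^{\Lam}(1,\nu,t)-1}\Bigr)
$$
in $\Z[q,q^{-1}]$. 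Substituting $q=1$ on both sides and applying to each factor the specializations recorded above turns the right-hand side into $\sum_{w\in\Sym(\nu,\nu')}\prod_{t=1}^{n}N^{\Lam}(w,\nu,t)$, which is exactly the asserted formula — this is precisely equation (\ref{dimFormula}) of the introduction.

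There is no genuine obstacle here: Corollary \ref{maincor1} is a formal consequence of Theorem \ref{mainthmA}. The only points deserving an explicit word are that $[m]_{k}$ should be read as an element of $\Z[q,q^{-1}]$ (so that evaluation at $q=1$ is meaningful, in particular for $m\leq 0$), and that the graded dimension of a finite-dimensional graded space is a Laurent polynomial whose value at $q=1$ recovers the ordinary dimension; both are immediate.
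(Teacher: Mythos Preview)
Your proof is correct and follows the same approach as the paper, which simply evaluates the formula in Theorem \ref{mainthmA} at $q=1$. Your observation that $[m]_{k}$ is already a Laurent polynomial specializing to $m$ at $q=1$ is in fact slightly cleaner than the paper's invocation of L'Hospital's rule, but the substance is identical.
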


\begin{proof} We evaluate the formula in Theorem \ref{mainthmA} at $q=1$ by applying the L'Hospital rule. The corollary follows.
\end{proof}

Let $\nu,\nu'\in I^\beta$. We fix an element $w \in\Sym(\nu,\nu')$. Applying Lemma \ref{deg1}, we can get $$
\prod_{t=1}^{n}q_{\nu_t}^{N^{\Lam}(1,\nu,t)-1}
=q^{\deg\psi_we(\nu)}\prod_{t=1}^{n}q_{\nu_t}^{N^{\Lam}(w,\nu,t)-1}
$$
It follows that $$\begin{aligned}
&\quad\,\dim_q e(\nu)\RR^\Lam(\beta)e(\nu')=\sum_{\substack{w\in\Sym(\nu,\nu')}}\prod_{t=1}^{n}\Bigl([N^{\Lam}(w,\nu,t)]_{\nu_t}
q_{\nu_t}^{N^{\Lam}(1,\nu,t)-1}\Bigr)\\
&=\sum_{\substack{w\in\Sym(\nu,\nu')}}\prod_{\substack{1\leq t\leq n\\ N^{\Lam}(w,\nu,t)\neq 0,\forall\,t}}\Bigl([N^{\Lam}(w,\nu,t)]_{\nu_t}
q_{\nu_t}^{N^{\Lam}(1,\nu,t)-1}\Bigr)\\
&=\sum_{\substack{w\in\Sym(\nu,\nu')}}q^{\deg(\psi_we(\nu))}\prod_{\substack{1\leq t\leq n\\ N^{\Lam}(w,\nu,t)\neq 0,\forall\,t}}\Bigl([N^{\Lam}(w,\nu,t)]_{\nu_t}
q_{\nu_t}^{N^{\Lam}(w,\nu,t)-1}\Bigr).
\end{aligned}
$$
If $N^{\Lam}(w,\nu,t)>0$, then \begin{equation}\label{eq1}
[N^{\Lam}(w,\nu,t)]_{\nu_t}q_{\nu_t}^{N^{\Lam}(w,\nu,t)-1}=\sum_{a=0}^{N^\Lam(w,\nu,t)-1}q_{\nu_t}^{2a} ;
\end{equation}
If $N^{\Lam}(w,\nu,t)<0$, then \begin{equation}\label{eq2}
[N^{\Lam}(w,\nu,t)]_{\nu_t}q_{\nu_t}^{N^{\Lam}(w,\nu,t)-1}=-\sum_{a=1}^{-N^\Lam(w,\nu,t)}q_{\nu_t}^{-2a} .
\end{equation}

Those integers $N^{\Lam}(w,\nu,t)$ could be negative or zero. Note that we always have $\sum\limits_{w\in\Sym(\nu,\nu')}\prod\limits_{t=1}^{n}N^\Lam(w,\nu,t)\geq 0$ as it is the dimension of a subspace by Corollary \ref{maincor1}. However, from the formula $\sum\limits_{w\in\Sym(\nu,\nu')}\prod\limits_{t=1}^{n}N^\Lam(w,\nu,t)$ itself, it is surprising to us why it is always non-negative.

The identity (\ref{eq1}) indicates that one might be able to obtain a monomial basis of $\RR^\Lam(\beta)$ of the form $\{e(\nu')\psi_wy_1^{c_1}\cdots y_n^{c_n}e(\nu)|0\leq c_t<N^\Lam(w,\nu,t),\forall\,1\leq t\leq n\}$.
The following example shows that this is not the case.

\begin{examp} Let $\mathscr{H}^{0}_{\ell,n}$ be the cyclotomic nilHecke algebra with level $\ell$ and size $n$. That is, $\mathscr{H}^{0}_{\ell,n}=\RR^\Lam(\beta)$ with $\Lam=\ell\Lam_0$, $\beta=n\alpha_0$. We consider the special case when $\ell=5, n=2$. Then $\Lam=5\Lam_0, \nu=(0,0)$ and $\Sym(\nu,\nu)=\{1,s_{1}\}$. By direct calculation, one gets that
$$
N^\Lam(1,\nu,1)=5,\,\, N^\Lam(1,\nu,2)=3,\,\, N^\Lam(s_1,\nu,1)=5,\,\,N^\Lam(s_1,\nu,2)=5.  $$
On the other hand, by \cite[Proposition 7]{HL} and \cite[Lemma 2.20]{HuL}, we have $$
\sum_{\substack{k_{1}+k_{2}=5-2+1=4}}x_{1}^{k_{1}}x_{2}^{k_{2}}=0. $$
Thus the elements in the set $\{\psi_{s_1}x_{1}^{a_{1}}x_{2}^{a_{2}}e(\nu)|0\leq a_{t}<N^\Lam(s_1,\nu,t)=5,\,t=1,2\}$ are $K$-linearly dependent.
\end{examp}

\smallskip
\subsection{A second formula for the dimension of $e(\nu)\RR^\Lam(\beta)e(\nu)$}

Let $\beta\in Q_n^+$ and $\nu\in I^{\beta}$. We can always write\begin{equation}\label{nu2}
\nu=(\nu_1,\cdots,\nu_n)=(\underbrace{\nu^{1},\nu^{1},\cdots,\nu^{1}}_{b_{1}\,copies},\cdots,\underbrace{\nu^{p},\nu^{p},\cdots,\nu^{p}}_{b_{p}\,copies}),
\end{equation}
where $p\in\N$, $b_1,\cdots,b_p\in\N$ with $\sum_{i=1}^{p}b_i=n$ and $\nu^{j}\neq\nu^{j+1}$ for any $1\leq j<p$.
The purpose of this subsection is to give a second formula for the dimension of $e(\nu)\RR^\Lam(\beta)e(\nu)$.

Define the set $$
\Sigma_n:=\bigl\{(k_1,\cdots,k_n)\in\Z^n\bigm|k_j\in\{0,1,\cdots,j-1\},\forall\,1\leq j\leq n\bigr\}.
$$
Consider the map $$\begin{aligned}
\theta_n: \Sym_n&\rightarrow \Sigma_n,\\
w&\mapsto \bigl(|J_w^{<1}|,\cdots,|J_w^{<n}|\bigr).
\end{aligned}
$$
It is clear that $\theta_n$ is well-defined by the definition of $J_w^{<t}$.

\begin{lem}\label{bij} With the above definitions and notations, we have that the map $\theta_n$ is a bijection.
\end{lem}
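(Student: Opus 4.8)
The plan is to show $\theta_n$ is a bijection by exhibiting an explicit two-sided inverse, which amounts to reconstructing the one-line notation of $w$ from the sequence $(|J_w^{<1}|,\ldots,|J_w^{<n}|)$. The key observation is that $|J_w^{<t}| = \#\{1\le j < t : w(j) < w(t)\}$ is precisely the number of entries to the left of position $t$ (in one-line notation) whose value is smaller than $w(t)$. Equivalently, if we insert the values $w(1), w(2), \ldots, w(n)$ one at a time, then when we insert $w(t)$ into the already-placed segment $(w(1),\ldots,w(t-1))$, the quantity $|J_w^{<t}|$ records how many of the previously placed values are below $w(t)$. Since $\{w(1),\ldots,w(n)\} = \{1,\ldots,n\}$, this is a classical bijective coding (the Lehmer-code / inversion-table correspondence): I would first carry out the induction on $n$, peeling off position $n$. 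Given $(k_1,\ldots,k_n)\in\Sigma_n$, by induction hypothesis there is a unique $w'\in\Sym_{n-1}$ with $\theta_{n-1}(w') = (k_1,\ldots,k_{n-1})$; then one checks that there is exactly one way to extend $w'$ to $w\in\Sym_n$ with $w|_{\{1,\ldots,n-1\}}$ order-isomorphic to $w'$ and with exactly $k_n$ of the values $w(1),\ldots,w(n-1)$ less than $w(n)$ — namely, $w(n)$ must be the $(k_n+1)$-th smallest element of $\{1,\ldots,n\}$, i.e.\ $w(n) = k_n+1$, and then $w(j)$ for $j<n$ is determined by shifting $w'(j)$ up by one whenever $w'(j) \ge k_n+1$.

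The core of the argument is thus two routine verifications. First, that this extension procedure does not disturb the earlier coordinates: for $j<n$ one has $J_w^{<j} = J_{w'}^{<j}$ as sets (the relative order among the first $n-1$ values is unchanged by the uniform shift), so $|J_w^{<j}| = k_j$; and by construction $|J_w^{<n}| = k_n$. Hence $\theta_n(w) = (k_1,\ldots,k_n)$, giving surjectivity. Second, for injectivity I would run the same peeling argument in reverse: if $\theta_n(w) = \theta_n(\tilde w)$, then $k_n$ forces $w(n) = \tilde w(n) = k_n+1$, and then the standardizations $w'$ and $\tilde w'$ of $w|_{\{1,\ldots,n-1\}}$ and $\tilde w|_{\{1,\ldots,n-1\}}$ satisfy $\theta_{n-1}(w') = \theta_{n-1}(\tilde w') = (k_1,\ldots,k_{n-1})$, so $w' = \tilde w'$ by induction, whence $w = \tilde w$. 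The base case $n=1$ is trivial since $\Sym_1 = \{1\}$ and $\Sigma_1 = \{(0)\}$.

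Alternatively — and this is the cleaner route I would actually write up — one avoids the standardization bookkeeping entirely by a counting argument: $\theta_n$ is a map between two finite sets of the same cardinality, $|\Sym_n| = n! = \prod_{j=1}^n j = |\Sigma_n|$, so it suffices to prove that $\theta_n$ is injective. For injectivity, suppose $\theta_n(w) = \theta_n(\tilde w)$ and argue by downward induction on the largest position at which they could differ, or more simply observe that $k_n = |J_w^{<n}|$ determines $w(n)$ as the $(k_n{+}1)$-st smallest value in $\{1,\ldots,n\}$, i.e.\ $w(n)=k_n+1$; removing position $n$ and relabelling the remaining values $\{1,\ldots,n\}\setminus\{k_n+1\}$ order-preservingly by $\{1,\ldots,n-1\}$ produces $w'\in\Sym_{n-1}$ with $\theta_{n-1}(w')=(k_1,\ldots,k_{n-1})$, and induction finishes it. The main obstacle — really the only thing requiring care — is checking that the order-preserving relabelling in the inductive step genuinely preserves each $|J_w^{<j}|$ for $j<n$, which follows because deleting the value $w(n)=k_n+1$ from positions to the right of, or equal to, position $j$ changes neither which positions lie to the left of $j$ nor the relative order of the values sitting there. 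Once that is in hand, both injectivity and the explicit inverse drop out immediately.
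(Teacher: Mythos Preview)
Your proposal is correct. Both you and the paper reduce to injectivity via the cardinality count $|\Sym_n|=n!=|\Sigma_n|$, but the injectivity arguments differ. The paper argues by direct contradiction: if $\theta_n(w)=\theta_n(u)$ with $w\neq u$, take the largest position $t$ where they disagree; assuming $w(t)<u(t)$, every $j\in J_w^{<t}$ maps (via $u^{-1}\circ w$) to an element of $J_u^{<t}$, and so does $u^{-1}(w(t))$ itself, forcing $|J_u^{<t}|\ge |J_w^{<t}|+1$. Your route instead proceeds by induction on $n$, using the explicit observation $w(n)=k_n+1$ (since $|J_w^{<n}|=w(n)-1$) to peel off the last position and descend to $\Sym_{n-1}$. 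Your approach has the advantage of yielding an explicit inverse and identifying $\theta_n$ with the classical Lehmer-code correspondence; the paper's approach is shorter and avoids the standardisation bookkeeping entirely. Either is fine here.
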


\begin{proof} Since both $\Sym_n$ and $\Sigma_n$ have cardinality $n!$, to prove the lemma, it suffices to show that $\theta_n$ is injective.

Let $w,u\in\Sym_n$ with $\theta_n(w)=\theta_n(u)$. Suppose that $u\neq w$. Let $1\leq t\leq n$ be the unique integer such that $w(t)\neq u(t)$ and $w(i)=u(i)$ for any $t<i\leq n$. Assume that $w(t)<u(t)$. Then $w(t)=u(m_t)$ for some $m_t\in\{1,2,\cdots,t-1\}$. Note that if $1\leq j<t$ and $w(j)<w(t)$, then for these $j$ we have $u(m_j)=w(j)<w(t)<u(t)$ for some $1\leq m_j<t$. It follows that $|J_w^{<t}|\leq |J_u^{<t}|-1$, a contradiction. In a similar (and symmetric) argument one can show that $u(t)<w(t)$ can not happen. Thus we get that $w(t)=u(t)$ which is a contradiction. This proves that $\theta_n$ is injective. Hence we complete the proof of the lemma.
\end{proof}

Let $\nu\in I^\beta$ be given as in (\ref{nu2}). For $0\leq t\leq p$, we define $$
b_0:=0,\quad c_t:=\sum_{i=0}^{t}b_i,\quad \Sym_{\fb}:=\Sym_{\{1,\cdots,c_1\}}\times\Sym_{\{c_1+1,\cdots,c_2\}}\times\cdots\times\Sym_{\{c_{p-1}+1,\cdots,n\}}.
$$
Let $\mathcal{D}_{\fb}$ be the set of minimal length left $\Sym_{\fb}$-coset representatives in $\Sym_{n}$. Set $\mathcal{D}(\nu):=\mathcal{D}_{\fb}\cap\Sym(\nu,\nu)$. Then we have $\Sym(\nu,\nu)=\mathcal{D}(\nu)\Sym_{\fb}$.

\begin{lem}\label{independ} Let $k$ be an integer with $c_{i-1}<k\leq c_{i}$, where $1\leq i\leq p$. Let $d\in\mathcal{D}(\nu)$, $w=w_{1}\times\cdots\times
w_{p}$, where $w_{j}\in \Sym_{\{c_{j-1}+1,\cdots,c_j\}},\,\,\forall\,1\leq j\leq p$. Then we have that $$N^\Lam(dw,\nu,k)=N^\Lam(d,\nu,w_i(k))-2|\tilde{J}_{w_i}^{<k}|+2(w_i(k)-c_{i-1}-1), $$
where $$
 \tilde{J}_{w_i}^{<k}:=\{c_{i-1}+1\leq a<k|w_i(a)<w_i(k)\}. $$
In particular, $N^\Lam(dw,\nu,k)$ does not depend on $w_j$ for any $1\leq j\neq i\leq p$.
\end{lem}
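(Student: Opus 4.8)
The plan is to unravel the definition of $N^\Lam(dw,\nu,k)$ via Definition \ref{keydfn1}, namely
$$
N^\Lam(dw,\nu,k)=\Bigl\langle\Lam-\sum_{j\in J_{dw}^{<k}}\alpha_{\nu_j},\,h_{\nu_k}\Bigr\rangle,
$$
and to compare the index set $J_{dw}^{<k}=\{1\le j<k\mid dw(j)<dw(k)\}$ with $J_d^{<w_i(k)}=\{1\le j<w_i(k)\mid d(j)<d(w_i(k))\}$. First I would record the two elementary facts that drive everything: (i) since $c_{i-1}<k\le c_i$ and $w$ preserves each block $\{c_{j-1}+1,\dots,c_j\}$, we have $dw(k)=d(w_i(k))$ and, more generally, $dw(j)=d(w_{i'}(j))$ when $j$ lies in the $i'$-th block; (ii) because $d\in\mathcal D_{\fb}$ is a minimal length left coset representative, $d$ is increasing on each block $\{c_{j-1}+1,\dots,c_j\}$. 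Fact (ii) is the key technical input: it lets me translate the condition ``$d(\text{something in block }i')<d(\text{something in block }i)$'' into a condition comparing the block index $i'$ against $i$ together with the within-block positions.

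The main computation is to split $J_{dw}^{<k}$ according to which block $j$ lies in. For $j$ in a block $i'$ with $i'<i$ (so automatically $j<k$), the element $dw(j)=d(w_{i'}(j))$ ranges over exactly the $d$-values of the whole $i'$-th block as $j$ does; hence the number of such $j$ with $dw(j)<dw(k)=d(w_i(k))$ equals the number of $a$ in the $i'$-th block with $d(a)<d(w_i(k))$, which is precisely the contribution of block $i'$ to $J_d^{<w_i(k)}$ — this part of the two index sets matches term for term. For $j$ in the $i$-th block with $j<k$, i.e. $c_{i-1}<j<k$, the condition $dw(j)<dw(k)$ becomes $d(w_i(j))<d(w_i(k))$, and since $d$ is increasing on block $i$ this is equivalent to $w_i(j)<w_i(k)$; thus these $j$ are counted by $|\tilde J_{w_i}^{<k}|$. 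Meanwhile, in $J_d^{<w_i(k)}$ the contribution of block $i$ is the set of $a$ with $c_{i-1}<a<w_i(k)$ and $d(a)<d(w_i(k))$, which by monotonicity of $d$ on block $i$ is all of $\{c_{i-1}+1,\dots,w_i(k)-1\}$, of size $w_i(k)-c_{i-1}-1$. Finally, for $j$ in a block $i'$ with $i'>i$ we have $j>k$ so such $j$ contribute nothing to $J_{dw}^{<k}$, and symmetrically blocks $i'>i$ contribute nothing to $J_d^{<w_i(k)}$ either (all their $a$ exceed $w_i(k)$ when... more carefully: for $a$ in block $i'>i$, $d(a)$ need not exceed $d(w_i(k))$, but $a\ge c_i+1>w_i(k)$, so $a\notin J_d^{<w_i(k)}$).

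Collecting these, $J_{dw}^{<k}$ and $J_d^{<w_i(k)}$ agree on the blocks $i'<i$ and both omit blocks $i'>i$, while on block $i$ the former picks out $\tilde J_{w_i}^{<k}$ (size $|\tilde J_{w_i}^{<k}|$) and the latter picks out an interval of size $w_i(k)-c_{i-1}-1$. Writing $\langle\Lam-\sum_{j\in J_{dw}^{<k}}\alpha_{\nu_j},h_{\nu_k}\rangle$ versus $\langle\Lam-\sum_{j\in J_d^{<w_i(k)}}\alpha_{\nu_j},h_{\nu_k}\rangle$, the block-$i'$ ($i'<i$) terms cancel in the difference, and what remains is $-\langle\sum_{j\in\tilde J_{w_i}^{<k}}\alpha_{\nu_j},h_{\nu_k}\rangle+\langle\sum_{c_{i-1}<a<w_i(k)}\alpha_{\nu_a},h_{\nu_k}\rangle$. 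Here I use that all the indices $j$ in block $i$ have $\nu_j=\nu^i=\nu_k$, so each $\langle\alpha_{\nu_j},h_{\nu_k}\rangle=\langle\alpha_{\nu_k},h_{\nu_k}\rangle=2$; thus the difference is $-2|\tilde J_{w_i}^{<k}|+2(w_i(k)-c_{i-1}-1)$, giving
$$
N^\Lam(dw,\nu,k)=N^\Lam(d,\nu,w_i(k))-2|\tilde J_{w_i}^{<k}|+2(w_i(k)-c_{i-1}-1),
$$
as claimed; the independence of $w_j$ for $j\ne i$ is then immediate since the right-hand side only involves $d$, $w_i$ and $k$. The main obstacle — really the only place care is needed — is bookkeeping the contribution of block $i$ itself and making sure the ``equal $\nu$-values within a block'' observation is invoked so that $\langle\alpha_{\nu_k},h_{\nu_k}\rangle=2$ turns the set-size discrepancy into the stated numerical correction; the cross-block cancellation is robust precisely because $d$ is block-increasing. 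I would also double-check the edge case where block $i$ is the last block (so $c_i=n$) and where $w_i(k)=c_{i-1}+1$ (then the interval is empty and $\tilde J_{w_i}^{<k}$ has the matching structure), but these are routine.
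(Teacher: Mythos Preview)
Your proof is correct and follows essentially the same approach as the paper's: both arguments decompose $J_{dw}^{<k}$ and $J_d^{<w_i(k)}$ block by block, use the bijection $a\mapsto w_{i'}(a)$ on each block $i'<i$ to match those contributions (the paper names this map $\gamma_j$ explicitly), invoke the fact that $d\in\mathcal D_{\fb}$ is increasing on each block to reduce the block-$i$ contributions to $\tilde J_{w_i}^{<k}$ versus the interval $\{c_{i-1}+1,\dots,w_i(k)-1\}$, and then read off the formula. One small point: when you say the block-$i'$ ($i'<i$) parts ``match term for term'' and therefore cancel in the difference of $N^\Lam$'s, what is really being used is that $\nu$ is constant on each block (so equal cardinality implies equal sum of $\alpha_{\nu_j}$'s); you invoke this explicitly for block $i$ but it is equally needed for the earlier blocks, since the two index sets there are in bijection but not literally identical.
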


\begin{proof}
By Definition \ref{keydfn1} and the definition of $\mathcal{D}(\nu)$, we have  $$
\begin{aligned}
J_{dw}^{<k}&=\cup_{j<i}\{c_{j-1}+1\leq a\leq c_j|dw(a)<dw(k)\}\cup\{c_{i-1}+1\leq a<k|dw(a)<dw(k)\}\\
&=\cup_{j<i}\{c_{j-1}+1\leq a\leq c_j|dw_j(a)<dw_i(k)\}\cup\{c_{i-1}+1\leq a<k|dw_i(a)<dw_i(k)\}\\
&=\cup_{j<i}\{c_{j-1}+1\leq a\leq c_j|dw_j(a)<dw_i(k)\}\cup\{c_{i-1}+1\leq a<k|w_i(a)<w_i(k)\}\\
&=\cup_{j<i}\{c_{j-1}+1\leq a\leq c_j|dw_j(a)<dw_i(k)\}\cup\tilde{J}_{w_i}^{<k}.
\end{aligned}$$
and $$
\begin{aligned}
J_{d}^{<w_i(k)}&=\cup_{j<i}\{c_{j-1}+1\leq a\leq c_j|d(a)<dw_{i}(k)\}\cup\{c_{i-1}+1\leq a<w_{i}(k)|d(a)<dw_{i}(k)\}\\
&=\cup_{j<i}\{c_{j-1}+1\leq a\leq c_j|d(a)<dw_{i}(k)\}\cup\{c_{i-1}+1\leq a<w_{i}(k)|a<w_{i}(k)\}.
\end{aligned}
$$
Since the map $$\begin{aligned}
\gamma_{j}: \{c_{j-1}+1\leq a\leq c_j|dw_j(a)<dw_i(k)\}&\rightarrow \{c_{j-1}+1\leq a\leq c_j|d(a)<dw_{i}(k)\},\\
 a&\mapsto w_j(a)
\end{aligned}
$$ is a well-defined bijection for $j<i$, we have  $$
|\{c_{j-1}+1\leq a\leq c_j|dw_j(a)<dw_i(k)\}|=|\{c_{j-1}+1\leq a\leq c_j|d(a)<dw_{i}(k)\}|
$$ when $j<i$.
Now the result follows directly from (\ref{Ndef}).
\end{proof}

\begin{rem}\label{rem2} The significance of the above lemma lies in that it means the integer $N^\Lam(dw,\nu,k)$ depends only on the interval $(c_{i-1},c_i]$ to which $k$ belongs and the element $w_i$, but not on the elements
$w_j$ for any $j\in\{1,2,\cdots,p\}\setminus\{i\}$.
\end{rem}

\begin{dfn} Let $\nu\in I^\beta$ be given as in (\ref{nu2}). For any $d\in\mathcal{D}(\nu)$, $1\leq i\leq p$ and $c_{i-1}<k\leq c_i$, we define \begin{equation}
\label{Ntilde}
\widetilde{N}^\Lam(d,\nu,k):=N^\Lam(d,\nu,k)+k-c_{i-1}-1 .
\end{equation}
\end{dfn}

\begin{thm}\label{mainthm1b} Let $\nu\in I^\beta$ be given as in (\ref{nu2}). Then $$
\dim\,e(\nu) \RR^\Lam({\beta})e(\nu)=\Bigl(\prod_{i=1}^{p}b_{i}!\Bigr)\sum_{d\in \mathcal{D}(\nu)}\Bigl(\prod_{t=1}^{n}\widetilde{N}^\Lam(d,\nu,t)\Bigr). $$
\end{thm}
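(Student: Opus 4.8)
The plan is to start from Corollary \ref{maincor1}, which gives
$$
\dim e(\nu)\RR^\Lam(\beta)e(\nu)=\sum_{w\in\Sym(\nu,\nu)}\prod_{t=1}^{n}N^\Lam(w,\nu,t),
$$
and use the coset decomposition $\Sym(\nu,\nu)=\mathcal D(\nu)\Sym_{\fb}$ established just before Lemma \ref{independ}. Each $w\in\Sym(\nu,\nu)$ factors uniquely as $dw'$ with $d\in\mathcal D(\nu)$ and $w'=w_1\times\cdots\times w_p\in\Sym_{\fb}$, so the sum becomes $\sum_{d\in\mathcal D(\nu)}\sum_{w'\in\Sym_{\fb}}\prod_{t=1}^n N^\Lam(dw',\nu,t)$. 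First I would fix $d$ and analyse the inner sum. Partition the index set $\{1,\dots,n\}$ into the blocks $(c_{i-1},c_i]$ for $1\le i\le p$; by Lemma \ref{independ} and Remark \ref{rem2}, for $k$ in the $i$-th block the factor $N^\Lam(dw',\nu,k)$ depends only on $d$ and on $w_i$, not on the other $w_j$. Hence the product over $t$ factors as a product over blocks, and the sum over $w'\in\Sym_{\fb}=\prod_i\Sym_{b_i}$ factors as $\prod_{i=1}^p\bigl(\sum_{w_i\in\Sym_{\{c_{i-1}+1,\dots,c_i\}}}\prod_{k=c_{i-1}+1}^{c_i}N^\Lam(dw',\nu,k)\bigr)$.

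Next I would compute each block sum. By Lemma \ref{independ}, for $c_{i-1}<k\le c_i$,
$$
N^\Lam(dw',\nu,k)=N^\Lam(d,\nu,w_i(k))-2|\tilde J_{w_i}^{<k}|+2(w_i(k)-c_{i-1}-1),
$$
so reindexing the product over $k$ by $m=w_i(k)$ (which runs over $\{c_{i-1}+1,\dots,c_i\}$ as $k$ does, since $w_i$ permutes that block) turns the product into
$$
\prod_{m=c_{i-1}+1}^{c_i}\Bigl(N^\Lam(d,\nu,m)+2(m-c_{i-1}-1)-2|\tilde J_{w_i}^{<w_i^{-1}(m)}|\Bigr).
$$
The key combinatorial input is that, as $w_i$ ranges over the symmetric group on the $b_i$-element block, the tuple $\bigl(|\tilde J_{w_i}^{<k}|\bigr)_{c_{i-1}<k\le c_i}$ ranges bijectively over $\prod_{j=1}^{b_i}\{0,1,\dots,j-1\}$ — this is exactly Lemma \ref{bij} (the bijection $\theta$) applied to the block. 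Using this bijection to re-sum, the block sum becomes a sum over all choices of ``offsets'' $0\le a_j<j$, and a short manipulation (matching $N^\Lam(d,\nu,\cdot)+2(\cdot)$ against the offset and telescoping, using the definition (\ref{Ntilde}) of $\widetilde N^\Lam(d,\nu,k)=N^\Lam(d,\nu,k)+k-c_{i-1}-1$) should collapse it to $b_i!\prod_{k=c_{i-1}+1}^{c_i}\widetilde N^\Lam(d,\nu,k)$. Multiplying over $i$ and summing over $d$ yields the claimed formula.

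**The hard part will be** the block-sum identity: showing that
$$
\sum_{w_i\in\Sym_{b_i}}\ \prod_{m=c_{i-1}+1}^{c_i}\Bigl(N^\Lam(d,\nu,m)+2(m-c_{i-1}-1)-2|\tilde J_{w_i}^{<w_i^{-1}(m)}|\Bigr)=b_i!\prod_{k=c_{i-1}+1}^{c_i}\widetilde N^\Lam(d,\nu,k).
$$
The cleanest route is probably to prove the abstract statement: for any integers $y_1,\dots,y_b$ (here $y_j$ plays the role of $N^\Lam(d,\nu,c_{i-1}+j)+2(j-1)$, appropriately indexed), one has $\sum_{\sigma\in\Sym_b}\prod_{j=1}^b\bigl(y_{\sigma(j)}-2\,(\text{number of }j'<j\text{ with }\sigma(j')<\sigma(j))\bigr)=b!\prod_{j=1}^b\bigl(\tfrac{y_j+\text{something}}{\cdot}\bigr)$, with the ``something'' bookkeeping arranged so the right side matches $\prod\widetilde N^\Lam$. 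I would prove this auxiliary identity by induction on $b$, peeling off the position of $\sigma^{-1}(b)$ (or of $\sigma(b)$), which reduces the left side to a sum over $\Sym_{b-1}$ with the $y$'s shifted in a controlled way; the telescoping in the definition of $\widetilde N^\Lam$ is designed precisely so that this reduction closes. I would also keep in mind Lemma \ref{eqa1} as a sanity check that the $N^\Lam$'s are the ``right'' quantities, and verify the small case $p=1$, $b_1=n$ (e.g. against the nilHecke example computed earlier) before committing to the general induction.
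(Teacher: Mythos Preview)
Your outline matches the paper's proof almost exactly: start from Corollary~\ref{maincor1}, use the coset decomposition $\Sym(\nu,\nu)=\mathcal D(\nu)\Sym_{\fb}$, factor the inner sum over blocks via Lemma~\ref{independ}, and reduce each block to a computation involving the offsets $|\tilde J_{w_i}^{<\cdot}|$ and the bijection of Lemma~\ref{bij}.

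The one place where you make life harder than necessary is the ``hard part''. After your reindexing $m=w_i(k)$ the offset appearing is $|\tilde J_{w_i}^{<w_i^{-1}(m)}|$, indexed by $m$; Lemma~\ref{bij} does not apply to this tuple directly. The paper inserts one extra elementary observation: the map $a\mapsto w_i(a)$ gives a bijection $\tilde J_{w_i}^{<w_i^{-1}(m)}\to \tilde J_{w_i^{-1}}^{<m}$, so $|\tilde J_{w_i}^{<w_i^{-1}(m)}|=|\tilde J_{w_i^{-1}}^{<m}|$. Replacing $w_i$ by $w_i^{-1}$ in the sum then puts the offsets in the form $|\tilde J_{w_i}^{<m}|$ with $m$ running over the block, and now Lemma~\ref{bij} decouples the variables completely: the sum over $w_i$ becomes a product over $m$ of independent arithmetic sums
\[
\sum_{a=0}^{m-c_{i-1}-1}\bigl(N^\Lam(d,\nu,m)+2(m-c_{i-1}-1)-2a\bigr)=(m-c_{i-1})\,\widetilde N^\Lam(d,\nu,m),
\]
whose product over the block is $b_i!\prod_m\widetilde N^\Lam(d,\nu,m)$. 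So no induction on $b$ is needed; the identity you flagged is a one-line arithmetic-series computation once the variables are decoupled.
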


\begin{proof}
By Corollary \ref{maincor1} and Lemma \ref{independ}, we have $$
\begin{aligned}
&\quad\,\dim e(\nu) \RR^\Lam({\beta})e(\nu)\\
=&\sum\limits_{w\in\Sym(\nu,\nu)}\prod\limits_{t=1}^{n}N^\Lam(w,\nu,t)=\sum\limits_{d\in\mathcal{D}(\nu)}\sum\limits_{w\in d\Sym_\fb}\prod\limits_{t=1}^{n}N^\Lam(w,\nu,t)\\
&=\sum\limits_{d\in\mathcal{D}(\nu)}\sum_{w\in d\Sym_\fb}\prod_{i=1}^{p}\prod\limits_{t=c_{i-1}+1}^{c_i}N^\Lam(w,\nu,t)\\
&=\sum\limits_{d\in\mathcal{D}(\nu)}\sum_{\substack{w_j\in \Sym_{\{c_{j-1}+1,\cdots,c_{j}\}}\\ \forall 1\leq j\leq p}}\prod_{i=1}^{p}\prod\limits_{t=c_{i-1}+1}^{c_i}N^\Lam(dw_1\cdots w_p,\nu,t)\\
&=\sum\limits_{d\in\mathcal{D}(\nu)}\sum_{\substack{w_j\in \Sym_{\{c_{j-1}+1,\cdots,c_{j}\}}\\ \forall 1\leq j\leq p}}\prod_{i=1}^{p}\prod\limits_{t=c_{i-1}+1}^{c_i}N^\Lam(dw_i,\nu,t)\\
&=\sum\limits_{d\in\mathcal{D}(\nu)}\prod_{i=1}^{p}\sum_{w_i\in \Sym_{\{c_{i-1}+1,\cdots,c_{i}\}}}\prod\limits_{t=c_{i-1}+1}^{c_i}(N^\Lam(d,\nu,w_i(t))-2|\tilde{J}_{w_i}^{<t}|+2(w_i(t)-c_{i-1}-1))\\
\end{aligned}
$$
Note that the map $$\tilde{\gamma}_{i}: \tilde{J}_{w_i}^{<w_i^{-1}(k)}\rightarrow \tilde{J}_{w_i^{-1}}^{<k},\,\, a\mapsto w_i(a),$$ is a well-defined bijection for $c_{i-1}+1\leq k\leq c_i$. In particular, $|\tilde{J}_{w_i}^{<w_i^{-1}(k)}|=|\tilde{J}_{w_i^{-1}}^{<k}| $ for $c_{i-1}+1\leq k\leq c_i$. Combing this equality with the bijection in Lemma \ref{bij}, we get that $$
\begin{aligned}
&\sum_{w_i\in \Sym_{\{c_{i-1}+1,\cdots,c_{i}\}}}\prod\limits_{t=c_{i-1}+1}^{c_i}(N^\Lam(d,\nu,w_i(t))-2|\tilde{J}_{w_i}^{<t}|+2(w_i(t)-c_{i-1}-1))\\
&=\sum_{w_i\in \Sym_{\{c_{i-1}+1,\cdots,c_{i}\}}}\prod\limits_{k=c_{i-1}+1}^{c_i}(N^\Lam(d,\nu,k)-2|\tilde{J}_{w_i}^{<w_i^{-1}(k)}|+2(k-c_{i-1}-1))\\
&=\sum_{w_i\in \Sym_{\{c_{i-1}+1,\cdots,c_{i}\}}}\prod\limits_{k=c_{i-1}+1}^{c_i}(N^\Lam(d,\nu,k)-2|\tilde{J}_{w_i^{-1}}^{<k}|+2(k-c_{i-1}-1))\\
&=\sum_{w_i\in \Sym_{\{c_{i-1}+1,\cdots,c_{i}\}}}\prod\limits_{k=c_{i-1}+1}^{c_i}(N^\Lam(d,\nu,k)-2|\tilde{J}_{w_i}^{<k}|+2(k-c_{i-1}-1))\\
&=\prod\limits_{k=c_{i-1}+1}^{c_i}\Bigl(N^\Lam(d,\nu,k)+2(k-c_{i-1}-1)+N^\Lam(d,\nu,k)-2+2(k-c_{i-1}-1)\\
&\qquad +\cdots+N^\Lam(d,\nu,k)-2(k-c_{i-1}-1)+2(k-c_{i-1}-1)\Bigr)\\
&=\prod\limits_{k=c_{i-1}+1}^{c_i}(k-c_{i-1})(N^\Lam(d,\nu,k)+k-c_{i-1}-1)\\
&=\prod\limits_{k=c_{i-1}+1}^{c_i}(k-c_{i-1})\tilde{N}^\Lam(d,\nu,k)\\
&=b_i!\prod\limits_{k=c_{i-1}+1}^{c_i}\tilde{N}^\Lam(d,\nu,k).
\end{aligned}
$$
Combining this equality with the equality obtained in the first paragraph of this proof, we prove the theorem.
\end{proof}

\begin{lem}\label{Lemma 4} Let $t\in\Z^{\geq 1}$ and $l\in\Z$. Then $$
\sum_{k=0}^{t-1}[l-2k]q^{l-t}=[t](1+q^{2}+\cdots+q^{2(l-t)}). $$
\end{lem}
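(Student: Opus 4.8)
The plan is to prove the identity $\sum_{k=0}^{t-1}[l-2k]q^{l-t}=[t](1+q^2+\cdots+q^{2(l-t)})$ by a direct computation, expanding both sides in terms of $q$ using the definition $[m]=(q^m-q^{-m})/(q-q^{-1})$. First I would clear the common denominator $q-q^{-1}$, so that it suffices to prove the polynomial (or Laurent polynomial) identity
\[
\sum_{k=0}^{t-1}(q^{l-2k}-q^{-l+2k})q^{l-t}=(q^t-q^{-t})(1+q^2+\cdots+q^{2(l-t)}).
\]
The left-hand side is a geometric-type sum: $\sum_{k=0}^{t-1}q^{2l-t-2k}-\sum_{k=0}^{t-1}q^{-t+2k}$, and each of these is a finite geometric series with ratio $q^{-2}$ (resp.\ $q^2$), so both collapse to closed forms.

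The key steps, in order, are: (1) rewrite $[l-2k]$ via its definition and factor out $q^{l-t}$; (2) evaluate $\sum_{k=0}^{t-1}q^{-2k}=\dfrac{q^{-2t}-1}{q^{-2}-1}$ and similarly $\sum_{k=0}^{t-1}q^{2k}=\dfrac{q^{2t}-1}{q^2-1}$, then substitute; (3) simplify the left-hand side to a single ratio and observe it equals $\dfrac{(q^t-q^{-t})(q^{2(l-t)+2}-1)}{(q-q^{-1})(q^2-1)}$ after combining the two geometric sums over the common exponent range; (4) recognize $1+q^2+\cdots+q^{2(l-t)}=\dfrac{q^{2(l-t)+2}-1}{q^2-1}$ and $[t]=\dfrac{q^t-q^{-t}}{q-q^{-1}}$, so the right-hand side matches. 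Alternatively — and perhaps more cleanly for the write-up — I would induct on $t$: the base case $t=1$ reads $[l]q^{l-1}=1+q^2+\cdots+q^{2(l-1)}$, which is immediate from $[l]=(q^l-q^{-l})/(q-q^{-1})$; for the inductive step, one computes the difference between the $t$ and $t-1$ versions of the left-hand side, namely $[l-2(t-1)]q^{l-t}+\bigl(q^{l-t}-q^{l-(t-1)}\bigr)\sum_{k=0}^{t-2}[l-2k]$, and checks it equals $[t](1+\cdots+q^{2(l-t)})-[t-1](1+\cdots+q^{2(l-t+1)})$ using the quantum-integer recursion $[t]=q[t-1]+q^{-(t-1)}=q^{-1}[t-1]+q^{t-1}$.

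I do not expect any serious obstacle here: this is an elementary $q$-series manipulation with no conceptual content. The one point requiring a little care is bookkeeping with the exponent ranges when one combines the two geometric series, and making sure the formula is stated and used with the implicit hypothesis $l\geq t$ (so that $1+q^2+\cdots+q^{2(l-t)}$ has the natural number of terms $l-t+1$); for $l<t$ the right-hand side should be read as the appropriate quantum integer identity, but in all the applications in the paper one has $l\geq t$, so I would simply assume $l\ge t$ throughout the proof. The cleanest presentation is probably the direct geometric-series computation of step (1)--(4) above, reserving the inductive argument as a remark.
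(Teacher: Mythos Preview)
Your proposal is correct and follows essentially the same approach as the paper: clear the common denominator $q-q^{-1}$, split the left-hand side into two finite geometric series in $q^{\pm 2}$, sum them in closed form, and compare with the right-hand side written as $(q^t-q^{-t})\dfrac{1-q^{2(l-t+1)}}{1-q^2}$. The paper does exactly this direct geometric-series computation (without the inductive alternative and without discussing the $l<t$ case).
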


\begin{proof} It suffices to show that $$
\sum_{k=0}^{t-1}(q^{l-2k}-q^{-(l-2k)})q^{l-t}=(q^{t}-q^{-t})(1+q^{2}+\cdots+q^{2(l-t)}).
$$
In fact, the left-hand side of the above equality is equal to $$\begin{aligned}
\sum_{k=0}^{t-1}q^{2l-t}q^{-2k}-\sum_{k=0}^{t-1}q^{-t}q^{2k}
&=q^{2l-t}\frac{1-q^{-2t}}{1-q^{-2}}-q^{-t}\frac{1-q^{2t}}{1-q^{2}}\\
&=\frac{q^{2l-3t+2}-q^{2l-t+2}}{1-q^{2}}-\frac{q^{-t}-q^{t}}{1-q^{2}},\end{aligned} $$
while the right-hand side of the above equality is equal to $$
(q^{t}-q^{-t})\frac{1-q^{2(l-t+1)}}{1-q^{2}}.
$$
Hence, they are equal to each other.
\end{proof}

In the rest of this subsection we consider the cyclotomic nilHecke algebra $\mathscr{H}^{0}_{\ell,n}=\RR^\Lam(\beta)$ with $\Lam=\ell\Lam_0$ and $\beta=n\alpha_0$. In this case, by definition, we have $$
N^\Lam(w,\nu,t)=\ell-2|J_w^{<t}|,\quad N^\Lam(1,\nu,t)=\ell-2(t-1),\,\,\forall\,1\leq t\leq n .
$$
The bijection $\theta$ between $\Sym_n$ and $\Sigma_n$ established in Lemma \ref{bij} implies that \begin{equation}\label{swap}
\sum_{w\in\Sym_n}\prod_{t=1}^{n}[N^\Lam(w,\nu,t)]_{\nu_t}=\sum_{w\in\Sym_n}\prod_{t=1}^{n}[\ell-2|J_w^{<t}|]=\prod_{t=1}^{n}\sum_{k=0}^{t-1}[\ell-2k].
\end{equation}

Combining the above results with Theorem $\ref{mainthmA}$, we derive the following graded dimension formula for the cyclotomic nilHecke algebra $\mathscr{H}^{(0)}_{\ell,n}$.

\begin{cor}\label{maincor2a} Let $\Lam:=\ell\Lam_0, \beta=n\alpha_0$. We have
$$
\dim_{q}\,\mathscr{H}^{(0)}_{\ell,n}=\Bigl(\prod_{k=1}^{n}\frac{q^{-2k}-1}{q^{-2}-1}\Bigr)\Bigl(\prod_{t=1}^{n}(1+q^{2}+\cdots+q^{2(\ell-t)})\Bigr). $$
\end{cor}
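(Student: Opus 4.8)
The plan is to specialize the second dimension formula (Theorem \ref{mainthm1b}) to the cyclotomic nilHecke algebra $\mathscr{H}^{(0)}_{\ell,n}$, together with the graded refinement coming from Theorem \ref{mainthmA}. Since $\nu=(0,0,\dots,0)$ here, in the notation of (\ref{nu2}) we have $p=1$, $b_1=n$, so $\Sym_{\fb}=\Sym_n$ and $\mathcal{D}(\nu)=\{1\}$; equivalently $\Sym(\nu,\nu)=\Sym_n$ and all the interval-shift corrections in Lemma \ref{independ} collapse. The first step is therefore to record that $N^\Lam(w,\nu,t)=\ell-2|J_w^{<t}|$ and $N^\Lam(1,\nu,t)=\ell-2(t-1)$, and to combine this with the graded identity displayed just before the statement, namely
$$
\dim_q e(\nu)\RR^\Lam(\beta)e(\nu)=\sum_{w\in\Sym_n}\prod_{t=1}^{n}\bigl([N^\Lam(w,\nu,t)]\,q^{N^\Lam(1,\nu,t)-1}\bigr).
$$

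Next I would peel off the $q$-power factors that do not depend on $w$: since $N^\Lam(1,\nu,t)-1=\ell-2t+1=\ell-2(t-1)-1$, and here all $d_i=1$ so $q_{\nu_t}=q$, we get
$$
\dim_q \mathscr{H}^{(0)}_{\ell,n}=\prod_{t=1}^{n}q^{\,\ell-2(t-1)-1}\cdot\sum_{w\in\Sym_n}\prod_{t=1}^{n}[\ell-2|J_w^{<t}|].
$$
Then the key combinatorial input is the bijection $\theta_n\colon\Sym_n\to\Sigma_n$ of Lemma \ref{bij}, which lets one factor the sum over $w$ as a product of sums:
$$
\sum_{w\in\Sym_n}\prod_{t=1}^{n}[\ell-2|J_w^{<t}|]=\prod_{t=1}^{n}\sum_{k=0}^{t-1}[\ell-2k],
$$
exactly as in (\ref{swap}). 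At this point the problem is reduced to evaluating, for each $t$, the quantity $q^{\,\ell-2(t-1)-1}\cdot\sum_{k=0}^{t-1}[\ell-2k]$.

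The third step is a direct application of Lemma \ref{Lemma 4} with $l=\ell$ (and the $t$ of that lemma equal to our $t$): it gives $\sum_{k=0}^{t-1}[\ell-2k]\,q^{\,\ell-t}=[t]\,(1+q^2+\cdots+q^{2(\ell-t)})$. One then has to reconcile the exponent $\ell-t$ furnished by Lemma \ref{Lemma 4} with the exponent $\ell-2(t-1)-1=\ell-2t+1$ appearing above; the difference is $(\ell-t)-(\ell-2t+1)=t-1$, so each factor contributes an extra $q^{-(t-1)}$, and $[t]q^{-(t-1)}=1+q^{-2}+\cdots+q^{-2(t-1)}=\frac{q^{-2t}-1}{q^{-2}-1}$. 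Multiplying the $n$ factors together yields
$$
\dim_q \mathscr{H}^{(0)}_{\ell,n}=\Bigl(\prod_{t=1}^{n}\frac{q^{-2t}-1}{q^{-2}-1}\Bigr)\Bigl(\prod_{t=1}^{n}(1+q^2+\cdots+q^{2(\ell-t)})\Bigr),
$$
which is the claim after renaming $t$ to $k$ in the first product. The only real care needed is bookkeeping of the $q$-exponents — matching the shift $N^\Lam(1,\nu,t)-1$ against the normalization in Lemma \ref{Lemma 4} and checking the elementary identity $[t]q^{-(t-1)}=(q^{-2t}-1)/(q^{-2}-1)$ — so I do not anticipate a genuine obstacle; it is essentially assembling (\ref{swap}), Lemma \ref{Lemma 4} and Theorem \ref{mainthmA}. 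As a sanity check one can set $q=1$: the right-hand side becomes $n!\prod_{t=1}^n(\ell-t+1)=n!\binom{\ell}{n}$-type expression matching the known rank $\ell(\ell-1)\cdots(\ell-n+1)\cdot n!$ of the cyclotomic nilHecke algebra, or compare with Corollary \ref{maincor1}.
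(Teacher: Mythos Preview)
Your proposal is correct and follows essentially the same route as the paper: specialize Theorem \ref{mainthmA} to the nilHecke setting, use the bijection of Lemma \ref{bij} (i.e.\ equation (\ref{swap})) to factor the sum over $\Sym_n$ into a product, and then apply Lemma \ref{Lemma 4} together with the identity $[t]q^{-(t-1)}=(q^{-2t}-1)/(q^{-2}-1)$. The only difference is cosmetic bookkeeping of the $q$-exponents --- the paper collects them into a single global power $q^{n(\ell-n)}$ before redistributing, while you keep them factor-by-factor --- and your opening reference to Theorem \ref{mainthm1b} is a red herring, since your actual argument (like the paper's) runs straight from Theorem \ref{mainthmA}.
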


\begin{proof} Applying Theorem \ref{mainthmA} in our special case $\Lam:=\ell\Lam_0, \beta=n\alpha_0$, we can get that $$\begin{aligned}
&\quad\,\dim_{q}\,\mathscr{H}^{(0)}_{\ell,n}=\sum_{w\in\Sym_n}\prod_{t=1}^{n}\bigl([\ell-2|J_w^{<t}|]q^{\ell-2t+1}\bigr)\\
&=q^{n(\ell-n)}\sum_{w\in\Sym_n}\prod_{t=1}^{n}[\ell-2|J_w^{<t}|]\\
&=q^{n(\ell-n)}\prod_{t=1}^{n}\sum_{k=0}^{t-1}[\ell-2k]\qquad\qquad\qquad\text{(by (\ref{swap}))}\\
&=q^{-n(\ell-n)/2}\prod_{t=1}^{n}\sum_{k=0}^{t-1}\Bigl([\ell-2k]q^{\ell-t}\Bigr)\\
&=q^{-n(n-1)/2}\prod_{t=1}^{n}\frac{(q^{t}-q^{-t})(1+q^{2}+\cdots+q^{2(\ell-t)})}{q-q^{-1}}\qquad\qquad        \text{(by Lemma \ref{Lemma 4})}\\
&=\Bigl(\prod_{k=1}^{n}\frac{q^{-2k}-1}{q^{-2}-1}\Bigr)\Bigl(\prod_{t=1}^{n}(1+q^{2}+\cdots+q^{2(\ell-t)})\Bigr).
\end{aligned}$$
This completes the proof of the corollary.
\end{proof}

Note that the above graded dimension formula for $\mathscr{H}^{(0)}_{\ell,n}$ also follows from \cite[Theorem 2.34]{HuL}. The polynomial $\prod_{k=1}^{n}\frac{q^{k}-1}{q-1}=\sum_{w\in\Sym_n}q^{\ell(w)}$ is the Poincare polynomial for the Iwahori-Hecke algebra $\HH_q(\Sym_n)$ associated to the symmetric group $\Sym_n$. Specializing $q$ to $1$, we obtain the following well-known dimension formula for the (ungraded) cyclotomic nilHecke algebra $\mathscr{H}^{(0)}_{\ell,n}$.

\begin{cor}\label{maincor2b}
$dim\,\mathscr{H}^{(0)}_{\ell,n}=n!\prod\limits_{j=0}^{n-1}(\ell-j).$
\end{cor}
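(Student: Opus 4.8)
The plan is to obtain this ungraded dimension formula by specializing the graded dimension formula of Corollary \ref{maincor2a} at $q=1$. First I would rewrite each factor of that formula as a genuine (Laurent) polynomial rather than a quotient: $\frac{q^{-2k}-1}{q^{-2}-1}=1+q^{-2}+\cdots+q^{-2(k-1)}$, which evaluates to $k$ at $q=1$, and $1+q^{2}+\cdots+q^{2(\ell-t)}$, which has $\ell-t+1$ terms and hence evaluates to $\ell-t+1$ at $q=1$. Substituting $q=1$ then yields
\[
\dim\mathscr{H}^{(0)}_{\ell,n}=\Bigl(\prod_{k=1}^{n}k\Bigr)\Bigl(\prod_{t=1}^{n}(\ell-t+1)\Bigr)=n!\prod_{j=0}^{n-1}(\ell-j),
\]
the last equality being the reindexing $j=t-1$ in the second product.

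As an alternative route I would argue directly from Corollary \ref{maincor1} and the bijection $\theta_n$ of Lemma \ref{bij}, avoiding Corollary \ref{maincor2a} altogether. In the case $\Lam=\ell\Lam_0$, $\beta=n\alpha_0$ there is a unique $\nu\in I^\beta$, one has $N^\Lam(w,\nu,t)=\ell-2|J_w^{<t}|$, and $\Sym(\nu,\nu)=\Sym_n$; so Corollary \ref{maincor1} together with the $q=1$ specialization of the identity (\ref{swap}) gives $\dim\mathscr{H}^{(0)}_{\ell,n}=\prod_{t=1}^{n}\sum_{k=0}^{t-1}(\ell-2k)$. Since $\sum_{k=0}^{t-1}(\ell-2k)=t\ell-t(t-1)=t(\ell-t+1)$, this product factors as $\bigl(\prod_{t=1}^{n}t\bigr)\bigl(\prod_{t=1}^{n}(\ell-t+1)\bigr)=n!\prod_{j=0}^{n-1}(\ell-j)$, as claimed.

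I do not expect any real obstacle here: the only subtlety is passing from the graded formula to the ungraded one, and that becomes transparent as soon as the quantum integers are written as their geometric-series expansions, so no L'Hospital-type limit computation is actually required. It is also worth remarking, as the paper already notes, that $\prod_{k=1}^{n}k=n!$ is the value at $q=1$ of the Poincar\'e polynomial $\prod_{k=1}^{n}\frac{q^{k}-1}{q-1}=\sum_{w\in\Sym_n}q^{\ell(w)}$ of $\Sym_n$, which makes the shape $n!\prod_{j=0}^{n-1}(\ell-j)$ of the answer unsurprising.
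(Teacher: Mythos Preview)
Your proposal is correct, and your primary approach---specializing Corollary~\ref{maincor2a} at $q=1$ after rewriting each factor as a finite geometric sum---is exactly what the paper does. The alternative route you sketch via Corollary~\ref{maincor1} and the bijection of Lemma~\ref{bij} is also valid and essentially amounts to specializing (\ref{swap}) at $q=1$, but the paper does not single this out as a separate argument.
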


\subsection{Criteria for $e(\nu)\neq 0$ in $\RR^\Lam(\beta)$}

In this subsection, we shall give some criteria for $e(\nu)\neq 0$ in $\RR^\Lam(\beta)$. In particular, we shall give a proof of Theorems \ref{mainthmB} here.

In the special cases of types $A_\ell^{(1)}$ and $A_\infty$, it was shown in \cite[Lemma 4.1]{HM} that $e(\nu)\neq 0$ in $\RR^\Lam(\beta)$ if and only if $\nu=(\nu_1,\cdots,\nu_n)$ is the residue sequence of a standard tableau in the subset $\mathscr{P}_\beta^\Lam$ of multi-partitions of $n$ determined by $\beta$. Similar criteria in the cases of types $C_\ell^{(1)}$ and $C_\infty$ can be obtained from \cite[Theorem 2.5]{APS}. These are not effective criteria in the sense that one has to check many standard tableaux in $\mathscr{P}_\beta^\Lam$. Our second main result Theorem \ref{mainthmB} of this paper solves the problems on determining when the KLR idempotent $e(\nu)\neq 0$ in $\RR^\Lam(\beta)$ for {\it arbitrary} symmetrizable Cartan matrix.

\medskip
\noindent
{\textbf{Proof of Theorem \ref{mainthmB}}}:  Let $\Lam\in P^+$, $\beta\in Q^+$ and $\nu=(\nu_1,\cdots,\nu_n)\in I^\beta$. It is clear that $e(\nu)\neq 0$ in $\RR^\Lam(\beta)$ if and only if $e(\nu)\RR^{\Lambda}(\beta)e(\nu)\neq 0$. Thus Theorem \ref{mainthmB} follows from Corollary \ref{maincor1}.
\qed
\medskip

Using our second version of the dimension formula for $e(\nu)\RR^{\Lambda}(\beta)e(\nu)$ given in Theorem \ref{mainthm1b}, we also obtain in Theorem \ref{mainthmB2} a second simplified (or divided power) version of the criterion for the KLR idempotent $e(\nu)$ to be nonzero in $\RR^\Lam(\beta)$.
As in the beginning of last subsection, we can always write\begin{equation}
\nu=(\nu_1,\cdots,\nu_n)=(\underbrace{\nu^{1},\nu^{1},\cdots,\nu^{1}}_{b_{1}\,copies},\cdots,\underbrace{\nu^{p},\nu^{p},\cdots,\nu^{p}}_{b_{p}\,copies}),
\end{equation}
where $p\in\N$, $b_1,\cdots,b_p\in\N$ with $\sum_{i=1}^{p}b_i=n$ and $\nu^{j}\neq\nu^{j+1}$ for any $1\leq j<p$. Let $\widetilde{N}^\Lam(d,\nu,t)$ be the integer as defined in (\ref{Ntilde}) and $\mathcal{D}(\nu)$ be defined as before.

\begin{thm}\label{mainthmB2} Let $\Lam\in P^+$, $\beta\in Q^+$ and $\nu=(\nu_1,\cdots,\nu_n)\in I^\beta$. Then $e(\nu)\neq 0$ in $\RR^\Lam(\beta)$ if and only if $$
\sum_{d\in \mathcal{D}(\nu)}\prod_{t=1}^{n}\widetilde{N}^\Lam(d,\nu,t)\neq 0 .
$$
\end{thm}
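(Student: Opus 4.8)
The plan is to derive Theorem \ref{mainthmB2} directly from the second dimension formula established in Theorem \ref{mainthm1b}, proceeding exactly as in the proof of Theorem \ref{mainthmB} from Corollary \ref{maincor1}. First I would record the elementary fact that $e(\nu)\neq 0$ in $\RR^\Lam(\beta)$ if and only if the idempotent truncation $e(\nu)\RR^\Lam(\beta)e(\nu)$ is a nonzero $K$-vector space, which (being finite dimensional) happens if and only if $\dim e(\nu)\RR^\Lam(\beta)e(\nu)\neq 0$.

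Next I would apply Theorem \ref{mainthm1b}. Writing $\nu$ in the standard form (\ref{nu2}) with block sizes $b_1,\dots,b_p$, that theorem gives
$$\dim e(\nu)\RR^\Lam(\beta)e(\nu)=\Bigl(\prod_{i=1}^{p}b_i!\Bigr)\sum_{d\in\mathcal{D}(\nu)}\Bigl(\prod_{t=1}^{n}\widetilde{N}^\Lam(d,\nu,t)\Bigr).$$
Since $\prod_{i=1}^{p}b_i!$ is a product of factorials, it is a strictly positive integer; hence the right-hand side vanishes if and only if $\sum_{d\in\mathcal{D}(\nu)}\prod_{t=1}^{n}\widetilde{N}^\Lam(d,\nu,t)=0$. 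Combining this with the first paragraph yields precisely the stated equivalence.

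The argument is short and there is no real obstacle: all the substantive work is already contained in Theorem \ref{mainthm1b} (which itself rests on Corollary \ref{maincor1}, Lemma \ref{independ} and Lemma \ref{bij}), and the only point worth flagging is the harmless observation that the factorial prefactor $\prod_i b_i!$ is nonzero and may be cancelled. I would also remark, as with the criterion in Theorem \ref{mainthmB}, that the individual summands $\prod_{t}\widetilde{N}^\Lam(d,\nu,t)$ need not be nonnegative, so the nonvanishing of the sum is a genuine statement that cannot be read off term by term; the advantage of this version over Theorem \ref{mainthmB} is merely that the index set $\mathcal{D}(\nu)$ of minimal-length coset representatives is considerably smaller than $\Sym(\nu,\nu)=\mathcal{D}(\nu)\Sym_\fb$.
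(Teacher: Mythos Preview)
Your proposal is correct and matches the paper's own proof essentially verbatim: the paper simply says ``The proof is the same as the proof of Theorem \ref{mainthmB} by using Theorem \ref{mainthm1b},'' which is exactly the argument you spell out. The additional remarks you make about the nonzero factorial prefactor and the smaller index set $\mathcal{D}(\nu)$ are accurate and helpful elaborations.
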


\begin{proof} The proof is the same as the proof of Theorem \ref{mainthmB} by using Theorem \ref{mainthm1b}.
\end{proof}

To sum all, we have given two criteria for $e(\nu)\neq 0$ in $\RR^\Lam(\beta)$ in this subsection. A third criterion (Corollary \ref{maincorC3}) for $e(\nu)\neq 0$ in $\RR^\Lam(\beta)$ will be given at the end of the next section.

\bigskip
\section{Level reduction for dimension formulae}

In this section we shall give a second application---level reduction for our dimension formula, which reveals some surprising connections between the dimension of the higher level cyclotomic quiver Hecke algebras with a sum of some products of the dimensions of some lower level cyclotomic quiver Hecke algebras. In particular, we shall give the proof of the fourth main result Theorem \ref{mainthmC} of this paper.

For any $\nu=(\nu_1,\cdots,\nu_n)\in I^n$, we define \begin{equation}\label{betanu}
\beta_{\nu}:=\sum_{i=1}^{n}\alpha_{\nu_i},\quad |\nu|:=n. \end{equation}
Let $\mathcal{D}_{(k,n-k)}$ be the set of minimal length left coset representatives of $\Sym_{(k,n-k)}$ in $\Sym_n$. We define $D^2(n)$ to be the set of all $(k,n-k)$-shuffles of $(1,2,\cdots,n)$ for $k=0,1,\cdots,n$. That is,  $$ D^2(n)=\Bigl\{\bigl((w(1),\cdots,w(k)),(w(k+1),\cdots,w(n))\bigr)\Bigm|
\begin{matrix}\text{$w\in\mathcal{D}_{(k,n-k)}$,}\\ \text{$k=0,1,\cdots,n$}\end{matrix}\Bigr\}.
$$
In particular, we always have $|D^2(n)|=2^n$.

\begin{dfn}\label{ssize} Let $\nu=(\nu_1,\cdots,\nu_n)\in I^n$. For any $k$-tuple ${\bf s}=(s_1,s_2,\cdots,s_k)$ of integers with $1\leq s_1<\cdots<s_k\leq n$, we define $$
|{\bf s}|:=k,\quad \nu_{\bf s}:=(\nu_{s_1},\cdots,\nu_{s_k}). $$
For any $\mu\in I^n$, we define $$
D^2(\nu,\mu):=\bigl\{\big(({\bf s}^1,{\bf s}^2),({\bf t}^1,{\bf t}^2)\big)\in D^2(n)\times D^2(n)\bigm|\beta_{\nu_{{\bf s}^i}}=\beta_{\mu_{{\bf t}^i}},\,\,i=1,2\bigr\}.
$$
\end{dfn}

Let $\big(({\bf s}^1,{\bf s}^2),({\bf t}^1,{\bf t}^2)\big)\in D^2(\nu,\mu)$. By construction, each $w_1\times w_2\in \Sym(\nu_{{\bf s}^1},
\mu_{{\bf t}^1})\times \Sym(\nu_{{\bf s}^2},\mu_{{\bf t}^2})$
can determine a unique element $w\in \Sym(\nu,\mu)$. Hence, we can get a canonical map:$$\tau:\,\bigsqcup_{\substack{\big(({\bf s}^1,{\bf s}^2),({\bf t}^1,{\bf t}^2)\big)\in D^2(\nu,\mu)}}\Bigl(\Sym(\nu_{{\bf s}^1},\mu_{{\bf t}^1})\times \Sym(\nu_{{\bf s}^2},\mu_{{\bf t}^2})\Bigr)\rightarrow\,\Sym(\nu,\mu).
$$

We can visualize any $w\in\Sym(\nu,\mu)$ as a planar diagram as follows: the diagram has two rows of vertices, each of them are labelled by $1,2,\cdots,n$, and there is an edge connecting the vertex $i$ in the top row with the vertex $j$ in the bottom row if and only if $w(i)=j$ and $\nu_{i}=\mu_j$.
For ${\bf s}^1=(s_1,\cdots,s_k), {\bf t}^1=(t_1,\cdots,t_k)$ with $1\leq s_1<s_2<\cdots<s_k\leq n, 1\leq t_1<t_2<\cdots<t_k\leq n$,
any $w_1\in\Sym(\nu_{{\bf s}^1},\mu_{{\bf t}^1})$ can be identified as a planar diagram as follows: the diagram has two rows of vertices, the top row vertices are labelled by $s_1,s_2,\cdots,s_k$, the bottom row vertices are labelled by $t_1,t_2,\cdots,t_k$, and there is an edge connecting the vertex $s_i$ in the top row with the vertex $t_j$ in the bottom row if and only if $w_1(i)=j$ and $\nu_{s_{i}}=\mu_{t_j}$. Similarly, we have the planar diagram for $({\bf s}^2,{\bf t}^2)$. Then the map $\tau$ is the native way to incorporate the two planar diagrams associated to $({\bf s}^1,{\bf t}^1),({\bf s}^2,{\bf t}^2)$ to a new diagram without breaking any edges in the diagram.

\begin{lem}\label{surjective}\begin{enumerate}
\item Let $\mu,\nu\in I^n$ and $w\in\Sym(\nu,\mu)$. Then for each ${\bf s}:=({\bf s}^1,{\bf s}^2)\in D^2(n)$, there exists a unique $w_1\in\Sym_{|{\bf s}^1|}$, a unique $w_2\in\Sym_{|{\bf s}^2|}$ and a unique
$({\bf t}^1,{\bf t}^2)\in D^2(n)$, such that $w_1\times w_2\in \Sym(\nu_{{\bf s}^1},\nu_{{\bf t}^1})\times \Sym(\nu_{{\bf s}^2},\nu_{{\bf t}^2})$ and $\tau(w_1\times w_2)=w$. In particular, $\tau$ is surjective;
\item For each $w\in\Sym(\nu,\mu)$, the cardinality of $\tau^{-1}(w)$ is $2^n$.
\end{enumerate}
\end{lem}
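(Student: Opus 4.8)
The plan is to exploit the planar-diagram picture described just before the statement, which turns the combinatorics of $\tau$ into a transparent bookkeeping of edges. For part (1), fix $w\in\Sym(\nu,\mu)$ and fix ${\bf s}=({\bf s}^1,{\bf s}^2)\in D^2(n)$, so that $\{s^1_1<\cdots<s^1_k\}$ and $\{s^2_1<\cdots<s^2_{n-k}\}$ partition $\{1,\dots,n\}$. First I would define ${\bf t}^1$ to be the increasing rearrangement of the set $\{w(s^1_1),\dots,w(s^1_k)\}$ and ${\bf t}^2$ the increasing rearrangement of $\{w(s^2_1),\dots,w(s^2_{n-k})\}$; since $w$ is a bijection these two sets partition $\{1,\dots,n\}$, so $({\bf t}^1,{\bf t}^2)\in D^2(n)$. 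Next I would define $w_1\in\Sym_k$ by the rule that $w_1(a)=b$ precisely when $w(s^1_a)=t^1_b$, and similarly $w_2\in\Sym_{n-k}$ from the restriction of $w$ to ${\bf s}^2$; these are well-defined permutations because $w$ maps $\{s^1_a\}$ bijectively onto $\{t^1_b\}$. One then checks $w_1\in\Sym(\nu_{{\bf s}^1},\nu_{{\bf t}^1})$: indeed $(w_1\nu_{{\bf s}^1})_b=(\nu_{{\bf s}^1})_{w_1^{-1}(b)}=\nu_{s^1_{w_1^{-1}(b)}}=\nu_{w^{-1}(t^1_b)}=\mu_{t^1_b}=(\nu_{{\bf t}^1})_b$, using $w\in\Sym(\nu,\mu)$, and noting that $\nu_{{\bf t}^1}=\mu_{{\bf t}^1}$ must hold as the diagram edges only connect equal labels. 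The same computation gives $w_2\in\Sym(\nu_{{\bf s}^2},\nu_{{\bf t}^2})$, and by the very definition of $\tau$ (gluing the two sub-diagrams without breaking edges) we get $\tau(w_1\times w_2)=w$. For uniqueness, observe that any pair $(w_1',w_2')$ with $\tau(w_1'\times w_2')=w$ and domain indexed by ${\bf s}$ must send $s^1_a$ to $w(s^1_a)$ for each $a$, which forces ${\bf t}^1,{\bf t}^2$ and then $w_1,w_2$ to be exactly the ones constructed. Surjectivity of $\tau$ is the special case ${\bf s}=((1,2,\dots,n),())$ (or any single choice of ${\bf s}$ with the appropriate $\beta$-condition), so in particular $\tau$ is onto.

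For part (2), I would count the fibre $\tau^{-1}(w)$ by summing over the choices in the disjoint union defining the domain of $\tau$. By part (1), for each ${\bf s}=({\bf s}^1,{\bf s}^2)\in D^2(n)$ there is exactly one $({\bf t}^1,{\bf t}^2)\in D^2(n)$ and exactly one $w_1\times w_2\in\Sym(\nu_{{\bf s}^1},\nu_{{\bf t}^1})\times\Sym(\nu_{{\bf s}^2},\nu_{{\bf t}^2})$ mapping to $w$ under $\tau$; moreover the condition $\beta_{\nu_{{\bf s}^i}}=\beta_{\mu_{{\bf t}^i}}$ is automatically satisfied because $w_1$ restricts $w$ and hence preserves the multiset of residues, so $(({\bf s}^1,{\bf s}^2),({\bf t}^1,{\bf t}^2))\in D^2(\nu,\mu)$. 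Thus each ${\bf s}\in D^2(n)$ contributes exactly one element to $\tau^{-1}(w)$, and conversely every element of $\tau^{-1}(w)$ determines such an ${\bf s}$ (the index set of its domain). Since $|D^2(n)|=2^n$, we conclude $|\tau^{-1}(w)|=2^n$.

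The main obstacle I anticipate is not any single computation but making the bookkeeping airtight: one must be careful that the labels on the top/bottom rows of the sub-diagrams are the original indices $s^i_a$ rather than $1,\dots,|{\bf s}^i|$, so that the identification of $w_1$ (a permutation of $\{1,\dots,k\}$) with a sub-diagram labelled by ${\bf s}^1,{\bf t}^1$ is consistent, and that "$\tau$ glues without breaking edges" really does reconstruct $w$ uniquely. Once the conventions are pinned down precisely as in the paragraph preceding the lemma, both parts follow from the single observation that a permutation of $\{1,\dots,n\}$ together with a chosen ordered partition of its domain into two increasing blocks is the same data as a pair of block permutations plus the induced partition of its range — everything else (the residue/$\beta$ compatibility, well-definedness of $w_1,w_2$) is forced by $w\in\Sym(\nu,\mu)$.
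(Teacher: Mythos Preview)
Your argument is correct and follows essentially the same route as the paper's proof: define ${\bf t}^i$ as the increasing rearrangement of $w({\bf s}^i)$, read off $w_i$ from the restriction of $w$, and deduce part~(2) from $|D^2(n)|=2^n$. One small wobble: the assertion ``$\nu_{{\bf t}^1}=\mu_{{\bf t}^1}$ must hold as the diagram edges only connect equal labels'' is false in general---this is you trying to reconcile a typo in the statement (which should read $\mu_{{\bf t}^i}$ rather than $\nu_{{\bf t}^i}$, to match the domain of $\tau$); your preceding computation already shows $w_1\in\Sym(\nu_{{\bf s}^1},\mu_{{\bf t}^1})$, which is what is actually needed.
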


\begin{proof} Let $w\in\Sym(\nu,\mu)$ and ${\bf s}:=({\bf s}^1,{\bf s}^2)\in D^2(n)$, where ${\bf s}^1=(i_1,\cdots,i_a)$,
${\bf s}^2=(\hat{i}_1,\cdots,\hat{i}_{n-a})$, $1\leq i_1<\cdots<i_a\leq n$, $1\leq\hat{i}_1<\cdots<\hat{i}_{n-a}\leq n$. Then ${\bf t}^1=(j_1,\cdots,j_a)$ is the unique rearrangement of $(w(i_1),\cdots,w(i_a))$ such that $1\leq j_1<\cdots<j_a\leq n$, while ${\bf t}^2=(\hat{j}_1,\cdots,\hat{j}_{n-a})$ is the unique rearrangement of $(w(\hat{i}_1),\cdots,w(\hat{i}_{n-a}))$ such that $1\leq \hat{j}_1<\cdots<\hat{j}_{n-a}\leq n$. We set $w_1\in\Sym_{a}$ to be the unique element such that $j_t=w(i_{w_1^{-1}(t)})$ for any $1\leq t\leq a$, while $w_2\in\Sym_{n-a}$ is the unique element such that
$\hat{j}_t=w(\hat{i}_{w_2^{-1}(t)})$ for any $1\leq t\leq n-a$. This proves 1). Now 2) follows from 1) and the fact that $|D^2(n)|=2^n$.
\end{proof}

\begin{dfn} Let $\mu,\nu\in I^n$ and $w\in\Sym(\nu,\mu)$. For $i\in\{1,2\}$, we define $w_{{\bf s},i}\in\Sym_{|{\bf s}^i|}$ to be the unique element $w_i$ determined by $w$ and ${\bf s}=({\bf s}^1,{\bf s}^2)$ which was introduced in Lemma \ref{surjective}.
\end{dfn}

\begin{thm}\label{mainthmC1}
Let $\mu,\nu\in I^n$. Suppose $\Lam=\Lam^1+\Lam^2$, where $\Lam^1,\Lam^2\in P^+$. Then
$$\begin{aligned}\dim e(\nu)\RR^\Lam(\beta) e(\mu)&=\sum_{\substack{\big(({\bf s}^1,{\bf s}^2),({\bf t}^1,{\bf t}^2)\big)\in D^2(\nu,\mu)}} \dim e(\nu_{{\bf s}^1})\RR^{\Lam^1}(\beta_{\nu_{{\bf s}^1}})e(\mu_{{\bf t}^1})\\
&\qquad\qquad\qquad \times\dim e(\nu_{{\bf s}^2})\RR^{\Lam^2}(\beta_{\nu_{{\bf s}^2}})e(\mu_{{\bf t}^2}). \end{aligned}$$
\end{thm}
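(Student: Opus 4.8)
The plan is to prove the identity purely combinatorially, by comparing the two sides via Corollary \ref{maincor1}. By that corollary, the left-hand side equals $\sum_{w\in\Sym(\nu,\mu)}\prod_{t=1}^n N^\Lam(w,\nu,t)$, and each factor on the right-hand side similarly expands as $\sum_{w_i\in\Sym(\nu_{{\bf s}^i},\mu_{{\bf t}^i})}\prod_{k} N^{\Lam^i}(w_i,\nu_{{\bf s}^i},k)$. So the whole right-hand side becomes a triple sum over $\big(({\bf s}^1,{\bf s}^2),({\bf t}^1,{\bf t}^2)\big)\in D^2(\nu,\mu)$, over $w_1\in\Sym(\nu_{{\bf s}^1},\mu_{{\bf t}^1})$, and over $w_2\in\Sym(\nu_{{\bf s}^2},\mu_{{\bf t}^2})$, of a product $\prod_k N^{\Lam^1}(w_1,\nu_{{\bf s}^1},k)\cdot\prod_k N^{\Lam^2}(w_2,\nu_{{\bf s}^2},k)$. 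The index set of this triple sum is exactly the domain of the map $\tau$ from the paragraph preceding Lemma \ref{surjective}, so the strategy is: (i) push the sum forward along $\tau$ using Lemma \ref{surjective}, which tells us $\tau$ is surjective with every fibre of size $2^n$; and (ii) show that each summand, reindexed by $w=\tau(w_1\times w_2)$, equals $2^{-n}\prod_{t=1}^n N^\Lam(w,\nu,t)$, so that summing over a fibre of $\tau$ recovers exactly $\prod_t N^\Lam(w,\nu,t)$.

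The heart of the argument is step (ii): for a fixed $w\in\Sym(\nu,\mu)$ and a fixed ${\bf s}=({\bf s}^1,{\bf s}^2)\in D^2(n)$ (with the induced ${\bf t}=({\bf t}^1,{\bf t}^2)$ and $w_i = w_{{\bf s},i}$ from Lemma \ref{surjective}), I want to establish a factorwise identity
$$
N^\Lam(w,\nu,t) \;=\; N^{\Lam^1}(w_{{\bf s},1},\nu_{{\bf s}^1},k)\ \text{ or }\ N^{\Lam^2}(w_{{\bf s},2},\nu_{{\bf s}^2},k)
$$
according to whether $t$ is the $k$-th entry of ${\bf s}^1$ or of ${\bf s}^2$. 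This would be too strong — the value $N^\Lam$ depends on $\Lam$, not $\Lam^i$ — so what I actually expect is the \emph{additive} splitting: writing $\Lam=\Lam^1+\Lam^2$ and using $\<\Lam,h_i\>=\<\Lam^1,h_i\>+\<\Lam^2,h_i\>$, together with the observation that for $t$ the $k$-th entry of ${\bf s}^1$ the set $J_w^{<t}$ splits as (indices coming from ${\bf s}^1$, which reassemble into $J_{w_1}^{<k}$ after relabelling) $\sqcup$ (indices coming from ${\bf s}^2$), one gets
$$
N^\Lam(w,\nu,t)\;=\;N^{\Lam^1}(w_{{\bf s},1},\nu_{{\bf s}^1},k)\;+\;\Big(-\!\!\sum_{\substack{j\in J_w^{<t}\\ j\in{\bf s}^2}}\<\alpha_{\nu_j},h_{\nu_t}\>\;+\;\<\Lam^2,h_{\nu_t}\>\Big).
$$
Because of this extra term, a factorwise identity cannot hold for a single ${\bf s}$; instead I must sum over all $2^n$ choices of ${\bf s}$ in the fibre $\tau^{-1}(w)$ simultaneously and show the cross-terms telescope. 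Concretely, I would fix $w$ and $t$, let $k=k({\bf s})$ be the position of $t$ inside whichever of ${\bf s}^1,{\bf s}^2$ contains it, and prove the aggregated identity
$$
\prod_{t=1}^n N^\Lam(w,\nu,t)\;=\;\sum_{({\bf s}^1,{\bf s}^2)\in D^2(n)}\ \prod_{t\in{\bf s}^1} N^{\Lam^1}\!\big(w_{{\bf s},1},\nu_{{\bf s}^1},k({\bf s},t)\big)\ \prod_{t\in{\bf s}^2} N^{\Lam^2}\!\big(w_{{\bf s},2},\nu_{{\bf s}^2},k({\bf s},t)\big),
$$
which I would deduce by induction on $n$ (peeling off the index $t=n$): the term $t=n$ contributes $N^\Lam(w,\nu,n)=\<\Lam^1-\sum_{j\in J_w^{<n}\cap{\bf s}^1}\alpha_{\nu_j},h_{\nu_n}\> + \<\Lam^2-\sum_{j\in J_w^{<n}\cap{\bf s}^2}\alpha_{\nu_j},h_{\nu_n}\>$, and the two summands are precisely the two ways $n$ can be assigned (to ${\bf s}^1$ or to ${\bf s}^2$) in the recursion, so the additive split of the last factor exactly reproduces the branching of $D^2(n)$ over $D^2(n-1)$.

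I expect the main obstacle to be the bookkeeping in step (ii): verifying that, after the relabelling identifying $\{s_1<\dots<s_a\}$ with $\{1,\dots,a\}$ and $(w(i_1),\dots)$ with its increasing rearrangement, the restricted permutation $w_{{\bf s},1}$ really has $J_{w_{{\bf s},1}}^{<k}$ equal to the ${\bf s}^1$-part of $J_w^{<t}$ under this relabelling — this is where the planar-diagram description of $\tau$ (``incorporate the two diagrams without breaking edges'') does the work, and it must be pinned down carefully using the explicit formulas for $w_1,w_2$ in the proof of Lemma \ref{surjective}. Once this relabelling compatibility and the additive splitting $\<\Lam,h_i\>=\<\Lam^1,h_i\>+\<\Lam^2,h_i\>$ are in hand, the identity follows by the induction on $n$ described above, and then summing over fibres of $\tau$ (each of size $2^n$, which cancels no factor since the $2^n$ summands are genuinely different and their sum is a single $\prod_t N^\Lam(w,\nu,t)$) together with surjectivity of $\tau$ completes the proof. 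Finally, Theorem \ref{mainthmC} will follow from this $l=2$ statement by an easy induction on $l$, after summing both sides over all $\nu,\mu\in I^\beta$ and recognizing the multinomial coefficients as counting the shuffles.
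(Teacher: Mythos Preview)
Your proposal is correct and follows essentially the same route as the paper: both expand via Corollary~\ref{maincor1}, push the sum forward along $\tau$ using Lemma~\ref{surjective}, and reduce to the per-$w$ identity (your aggregated identity is the paper's claim~(\ref{claimrd11})), which is then proved by peeling off the index $n$---your branching $D^2(n)\to D^2(n-1)$ is exactly the paper's involution $f_n$, and the additive split $N^\Lambda(w,\nu,n)=N^{\Lambda^1}(w_{{\bf s},1},\nu_{{\bf s}^1},|{\bf s}^1|)+N^{\Lambda^2}(w_{\tilde{\bf s},2},\nu_{\tilde{\bf s}^2},|\tilde{\bf s}^2|)$ together with the relabelling $J_{w_{{\bf s},1}}^{<k}\leftrightarrow J_w^{<t}\cap{\bf s}^1$ appear in both arguments. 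One small note: your initial phrasing of step~(ii), that each of the $2^n$ fibre summands individually equals $2^{-n}\prod_t N^\Lambda(w,\nu,t)$, is false and not what you end up proving; only their \emph{sum} over the fibre equals $\prod_t N^\Lambda(w,\nu,t)$, as you correctly state afterwards.
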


\begin{proof}
By dimension formula in Corollary \ref{maincor1} and Lemma \ref{surjective}, we have:$$
\begin{aligned}{\text{RHS}}=&\sum_{\substack{\big(({\bf s}^1,{\bf s}^2),({\bf t}^1,{\bf t}^2)\big)\in D^2(\nu,\mu)}} \sum_{\substack{w_1\in \Sym(\nu_{{\bf s}^1},\mu_{{\bf t}^1})\\w_2\in\Sym(\nu_{{\bf s}^2},\mu_{{\bf t}^2})}}\prod_{\substack{k_1=1,\cdots,|{\bf s}^1|,\\k_2=1,\cdots,|{\bf s}^2|}}N^{\Lam^1}(w_1,\nu_{{\bf s}^1},k_1)N^{\Lam^2}(w_2,\nu_{{\bf s}^2},k_2)\\
=&\sum_{w\in \Sym(\nu,\mu)}\sum_{({\bf s}^1,{\bf s}^2)\in D^2(n)}\prod_{\substack{k_1=1,\cdots,|{\bf s}^1|,\\k_2=1,\cdots,|{\bf s}^2|}}N^{\Lam^1}(w_{{\bf s},1},\nu_{{\bf s}^1},k_1)N^{\Lam^2}(w_{{\bf s},2},\nu_{{\bf s}^2},k_2).
\end{aligned}$$
To prove the theorem, it suffices to show for each $w\in \Sym(\nu,\mu)$, \begin{equation}\label{claimrd11}\sum_{({\bf s}^1,{\bf s}^2)\in D^2(n)}\prod_{\substack{k_1=1,\cdots,|{\bf s}^1|,\\k_2=1,\cdots,|{\bf s}^2|}}N^{\Lam^1}(w_{{\bf s},1},\nu_{{\bf s}^1},k_1)N^{\Lam^2}(w_{{\bf s},2},\nu_{{\bf s}^2},k_2)=\prod_{t=1}^{n}N^{\Lam}(w,\nu,t).
\end{equation}

To see this, we consider the following map:  $$f_n: D^2(n)\rightarrow D^2(n)$$
\[({\bf s}^1,{\bf s}^2)\mapsto
\begin{cases}
({\bf s}^1\setminus\{n\},{\bf s}^2\cup\{n\}),& \text{if $n\in {\bf s}^1$;}\\
({\bf s}^1\cup\{n\},{\bf s}^2\setminus\{n\}),& \text{if $n\in {\bf s}^2$,}
\end{cases}\]
where ${\bf s}^i\setminus\{n\}$ means that we remove the integer $n$ from ${\bf s}^i$ and ${\bf s}^i\cup\{n\}$ means we add the integer $n$ to the end of ${\bf s}^i$. It's easy to see $f_n$ is a well-defined involution. For any $({\bf s}^1,{\bf s}^2)\in D^2(n)$, we set $({\bf \widetilde{s}}^1,{\bf \widetilde{s}}^2):=f_n({\bf s}^1,{\bf s}^2)$. Note that if $n\in {\bf s}^i$ then $n$ must sit at the end of ${\bf s}^i$. Clearly, by the discussion in the paragraph above Lemma \ref{surjective} and Definition \ref{keydfn1}, $$\begin{aligned}
&\prod_{\substack{k_1=1,\cdots,|{\bf s}^1\setminus\{n\}|,\\k_2=1,\cdots,|{\bf s}^2\setminus\{n\}|}}N^{\Lam^1}(w_{{\bf s},1},\nu_{{\bf s}^1},k_1)N^{\Lam^2}(w_{{\bf s},2},\nu_{{\bf s}^2},k_2)\\
&\qquad =\prod_{\substack{k_1=1,\cdots,|{\bf \widetilde{s}}^1\setminus\{n\}|,\\k_2=1,\cdots,|{\bf \widetilde{s}}^2\setminus\{n\}|}}N^{\Lam^1}(w_{{\bf\widetilde{s}},1},\nu_{{\bf \widetilde{s}}^1},k_1)N^{\Lam^2}(w_{{\bf\widetilde{s}},2},\nu_{{\bf \widetilde{s}}^2},k_2) .
\end{aligned}
$$

If $n\in {\bf s}^1$, then $$
\begin{aligned}
&\prod_{\substack{k_1=1,\cdots,|{\bf s}^1|,\\k_2=1,\cdots,|{\bf s}^2|}}N^{\Lam^1}(w_{{\bf s},1},\nu_{{\bf s}^1},k_1)N^{\Lam^2}(w_{{\bf s},2},\nu_{{\bf s}^2},k_2)\\
&\qquad\qquad+\prod_{\substack{k_1=1,\cdots,|{\bf \widetilde{s}}^1|,\\k_2=1,\cdots,|{\bf \widetilde{s}}^2|}}N^{\Lam^1}(w_{{\bf\widetilde{s}},1},\nu_{{\bf \widetilde{s}}^1},k_1)N^{\Lam^2}(w_{{\bf\widetilde{s}},2},\nu_{{\bf \widetilde{s}}^2},k_2)\\
=&N^{\Lam^1}(w_{{\bf s},1},\nu_{{\bf s}^1},|{\bf s}^1|)\prod_{\substack{k_1=1,\cdots,|{\bf s}^1\setminus\{n\}|,\\k_2=1,\cdots,|{\bf s}^2\setminus\{n\}|}}N^{\Lam^1}(w_{{\bf s},1},\nu_{{\bf s}^1},k_1)N^{\Lam^2}(w_{{\bf s},2},\nu_{{\bf s}^2},k_2)\\
&\qquad\qquad +N^{\Lam^2}(w_{{\bf\widetilde{s}},2},\nu_{{\bf \widetilde{s}}^2},|{\bf \widetilde{s}}^2|)\prod_{\substack{k_1=1,\cdots,|{\bf \widetilde{s}}^1\setminus\{n\}|,\\k_2=1,\cdots,|{\bf \widetilde{s}}^2\setminus\{n\}|}}N^{\Lam^1}(w_{{\bf\widetilde{s}},1},\nu_{{\bf \widetilde{s}}^1},k_1)N^{\Lam^2}(w_{{\bf\widetilde{s}},2},\nu_{{\bf \widetilde{s}}^2},k_2).
\end{aligned}$$
By assumption, $\tau(w_{{\bf s},1}\times w_{{\bf s},2})=w=\tau(w_{{\bf\widetilde{s}},1}\times w_{{\bf\widetilde{s}},2})$ and $n\in {\bf s}^1\cap {\bf \widetilde{s}}^2$. To simplify the notations, we set $$\begin{aligned}
& a:=|\nu_{{\bf s}^1}|,\,\,\, {\bf s}^1=(i_1,\cdots,i_{a-1},n),\,\,\, {\bf s}^2=(\hat{i}_1,\cdots,\hat{i}_{n-a}),\,\,\mu=w\nu=(\mu_1,\cdots,\mu_n),\\
& w_{{\bf s},1}\nu_{{\bf s}^1}=(\mu_{j_1},\cdots,\mu_{j_a}),\,\, w_{{\bf s},2}\nu_{{\bf s}^2}=(\mu_{\hat{j}_1},\cdots,\mu_{\hat{j}_{n-a}}),
\end{aligned}
$$
where $((j_1,\cdots,j_a),(\hat{j}_1,\cdots,\hat{j}_{n-a}))$ is an $(a,n-a)$-shuffle of $(1,2,\cdots,n)$.
Then $$\begin{aligned}
&\widetilde{\bf s}^1=(i_1,\cdots,i_{a-1}),\,\,\, \widetilde{\bf s}^2=(\hat{i}_1,\cdots,\hat{i}_{n-a},n),\\
& w_{\widetilde{\bf s},1}\nu_{\widetilde{\bf s}^1}=(\mu_{j_1},\cdots,\mu_{j_{a-1}}),\,\, w_{\widetilde{\bf s},2}\nu_{\widetilde{\bf s}^2}=(\mu_{\hat{j}_1},\cdots,\mu_{\hat{j}_{k}},\mu_{j_a},\mu_{\hat{j}_{k+1}},\cdots,\mu_{\hat{j}_{n-a}}), \end{aligned}
$$
where $1\leq k\leq n-a$ is such that $\hat{j}_1<\cdots<\hat{j}_k<j_a<\hat{j}_{k+1}<\cdots<\hat{j}_{n-a}$.

Given $1\leq k\leq n$ with $w(k)<w(n)$, we have either $k=i_t$ for some $1\leq t<a$, or $k=\hat{i}_l$ for some $1\leq l\leq n-a$. In the former case, $w(i_t)=j_{w_{{\bf s},1}(t)}$, $w(n)=j_{w_{{\bf s},1}(a)}$, and thus $w(i_t)<w(n)$ implies that
$w_{{\bf s},1}(t)<w_{{\bf s},1}(a)$; in the latter case, $w(\hat{i}_l)=\hat{j}_{w_{\widetilde{\bf s},2}(l)}$, and thus $w(\hat{i}_l)<w(n)$ implies that
$w_{\widetilde{\bf s},2}(l)<j_a=w_{\widetilde{\bf s},2}(n-a+1)$. As a result, we see from Definition \ref{keydfn1} that $$N^{\Lam^1}(w_{{\bf s},1},\nu_{{\bf s}^1},|{\bf s}^1|)+N^{\Lam^2}(w_{{\bf\widetilde{s}},2},\nu_{{\bf \widetilde{s}}^2},|{\bf \widetilde{s}}^2|)=N^{\Lam}(w,\nu,n).$$
We get that $$\begin{aligned}
&\quad \prod_{\substack{k_1=1,\cdots,|{\bf s}^1|,\\k_2=1,\cdots,|{\bf s}^2|}}N^{\Lam^1}(w_{{\bf s},1},\nu_{{\bf s}^1},k_1)N^{\Lam^2}(w_{{\bf s},2},\nu_{{\bf s}^2},k_2)\\
&\qquad\qquad +\prod_{\substack{k_1=1,\cdots,|{\bf \widetilde{s}}^1|,\\k_2=1,\cdots,|{\bf \widetilde{s}}^2|}}N^{\Lam^1}(w_{\widetilde{\bf s},1},\nu_{{\bf \widetilde{s}}^1},k_1)N^{\Lam^2}(w_{\widetilde{\bf s},2},\nu_{{\bf \widetilde{s}}^2},k_2)\\
&=N^{\Lam}(w,\nu,n)\prod_{\substack{k_1=1,\cdots,|{\bf s}^1\setminus\{n\}|,\\k_2=1,\cdots,|{\bf s}^2\setminus\{n\}|}}N^{\Lam^1}(w_{{\bf s},1},\nu_{{\bf s}^1},k_1)N^{\Lam^2}(w_{{\bf s},2},\nu_{{\bf s}^2},k_2)
\end{aligned}
$$
If $n\in {\bf s}^2$, then we can compute in a similar way and deduce the same equality as above.

Since $f_n$ is an involution, we get that $$\begin{aligned}
&\quad\,\sum_{({\bf s}^1,{\bf s}^2)\in D^2(n)}\prod_{\substack{k_1=1,\cdots,|{\bf s}^1|,\\k_2=1,\cdots,|{\bf s}^2|}}N^{\Lam^1}(w_{{\bf s},1},\nu_{{\bf s}^1},k_1)N^{\Lam^2}(w_{{\bf s},2},\nu_{{\bf s}^2},k_2)\\
&=\frac{1}{2}\sum_{({\bf s}^1,{\bf s}^2)\in D^2(n)}\Big(\prod_{\substack{k_1=1,\cdots,|{\bf s}^1|,\\k_2=1,\cdots,|{\bf s}^2|}}N^{\Lam^1}(w_{{\bf s},1},\nu_{{\bf s}^1},k_1)N^{\Lam^2}(w_{{\bf s},2},\nu_{{\bf s}^2},k_2)+\\
&\qquad\qquad \prod_{\substack{k_1=1,\cdots,|{\bf \widetilde{s}}^1|,\\k_2=1,\cdots,|{\bf \widetilde{s}}^2|}}N^{\Lam^1}(w_{\widetilde{\bf s},1},\nu_{{\bf \widetilde{s}}^1},k_1)N^{\Lam^2}(w_{\widetilde{\bf s},2},\nu_{{\bf \widetilde{s}}^2},k_2)\Big)\\
&=\frac{1}{2}N^{\Lam}(w,\nu,n)\sum_{({\bf s}^1,{\bf s}^2)\in D^2(n)}\prod_{\substack{k_1=1,\cdots,|{\bf s}^1\setminus\{n\}|,\\k_2=1,\cdots,|{\bf s}^2\setminus\{n\}|}}N^{\Lam^1}(w_{{\bf s},1},\nu_{{\bf s}^1},k_1)N^{\Lam^2}(w_{{\bf s},2},\nu_{{\bf s}^2},k_2).
\end{aligned}
$$
Similarly, we can define $$f_{n-1}: D^2(n)\rightarrow D^2(n)$$
\[({\bf s}^1,{\bf s}^2)\mapsto
\begin{cases}
({\bf s}^1\setminus\{n-1\},{\bf s}^2\cup\{n-1\}), &\text{if $n-1\in {\bf s}^1$;}\\
({\bf s}^1\cup\{n-1\},{\bf s}^2\setminus\{n-1\}), &\text{if $n-1\in {\bf s}^2$,}
\end{cases}\]
where ${\bf s}^i\setminus\{n-1\}$ means we remove the integer $n-1$ from ${\bf s}^i$, and ${\bf s}^i\cup\{n-1\}$ means we inset the integer $n-1$ into ${\bf s}^i$ such that it is again in increasing order. We define $({\bf \hat{s}}^1,{\bf \hat{s}}^2):=f_{n-1}({\bf s}^1,{\bf s}^2)$.

It's easy to see $f_{n-1}$ is a well-defined bijection. Using the same argument as in the second last paragraph and the definition of $N^{\Lam}(w,\nu,n-1)$, we can deduce that $$\begin{aligned}
&\quad \prod_{\substack{k_1=1,\cdots,|{\bf s}^1\setminus\{n\}|,\\k_2=1,\cdots,|{\bf s}^2\setminus\{n\}|}}N^{\Lam^1}(w_{{\bf s},1},\nu_{{\bf s}^1},k_1)N^{\Lam^2}(w_{{\bf s},2},\nu_{{\bf s}^2},k_2)+\\
&\qquad\qquad\prod_{\substack{k_1=1,\cdots,|{\bf\hat{s}}^1\setminus\{n\}|,\\k_2=1,\cdots,|{\bf\hat{s}}^2\setminus\{n\}|}}N^{\Lam^1}(w_{\hat{\bf s},1},\nu_{{\bf \hat{s}}^1},k_1)N^{\Lam^2}(w_{\hat{\bf s},2},\nu_{{\bf \hat{s}}^2},k_2)\\
&=N^{\Lam}(w,\nu,n-1)\prod_{\substack{k_1=1,\cdots,|{\bf s}^1\setminus\{n-1,\,n\}|,\\k_2=1,\cdots,|{\bf s}^2\setminus\{n-1,
,n\}|}}N^{\Lam^1}(w_{{\bf s},1},\nu_{{\bf s}^1},k_1)N^{\Lam^2}(w_{{\bf s},2},\nu_{{\bf s}^2},k_2).
\end{aligned}
$$
Hence, we have:$$\begin{aligned}
&\quad\,\sum_{({\bf s}^1,{\bf s}^2)\in D^2(n)}\prod_{\substack{k_1=1,\cdots,|{\bf s}^1|,\\k_2=1,\cdots,|{\bf s}^2|}}N^{\Lam^1}(w_{{\bf s},1},\nu_{{\bf s}^1},k_1)N^{\Lam^2}(w_{{\bf s},2},\nu_{{\bf s}^2},k_2)\\
&=\frac{1}{2^2}N^{\Lam}(w,\nu,n-1)N^{\Lam}(w,\nu,n)\times\\
&\qquad \sum_{({\bf s}^1,{\bf s}^2)\in D^2(n)}\prod_{\substack{k_1=1,\cdots,|{\bf s}^1\setminus\{n-1,n\}|,\\k_2=1,\cdots,|{\bf s}^2\setminus\{n-1,n\}|}}N^{\Lam^1}(w_{{\bf s},1},\nu_{{\bf s}^1},k_1)N^{\Lam^2}(w_{{\bf s},2},\nu_{{\bf s}^2},k_2).
\end{aligned}
$$

Repeating the above argument with $n-1$ replaced by $n-2,n-3,\cdots,1$ and remember $|D^2(n)|=2^n$, we can get that
$$\begin{aligned}
&\quad\,\sum_{({\bf s}^1,{\bf s}^2)\in D^2(n)}\prod_{\substack{k_1=1,\cdots,|{\bf s}^1|,\\k_2=1,\cdots,|{\bf s}^2|}}N^{\Lam^1}(w_{{\bf s},1},\nu_{{\bf s}^1},k_1)N^{\Lam^2}(w_{{\bf s},2},\nu_{{\bf s}^2},k_2)\\
&=\frac{1}{2^n}N^{\Lam}(w,\nu,1)\cdots N^{\Lam}(w,\nu,n)\sum_{({\bf s}^1,{\bf s}^2)\in D^2(n)}1\\
&=N^{\Lam}(w,\nu,1)\cdots N^{\Lam}(w,\nu,n),
\end{aligned}
$$ which completes the proof of our claim (\ref{claimrd11}).
\end{proof}

Recall that for each $\beta=\sum_{i\in I}k_i\alpha_i\in Q^+$, $|\beta|=\sum_{i\in I}k_i$.

\begin{cor}\label{level2cor} Let $\mu\in I^\beta$, $\Lam=\Lam^1+\Lam^2$ with $\Lam^1,\Lam^2\in P^+$. Then
$$\begin{aligned}
\dim\RR^\Lam(\beta) e(\mu)=&\sum_{({\bf t}^1,{\bf t}^2)\in D^2(n)}\begin{pmatrix}|\beta|\\ |{\bf t}^1|\end{pmatrix} \dim\RR^{\Lam^1}(\beta_{\mu_{{\bf t}^1}})e(\mu_{{\bf t}^1})\times
\dim\RR^{\Lam^2}(\beta_{\mu_{{\bf t}^2}})e(\mu_{{\bf t}^2}),\\
\dim\RR^\Lam(\beta)=&\sum_{\substack{\beta_1,\beta_2\in Q^+\\ \beta=\beta_1+\beta_2}}\Bigl(\begin{matrix}|\beta|\\ |\beta_1|\end{matrix}\Bigr)^2\dim\RR^{\Lam^1}(\beta_1)\times\dim\RR^{\Lam^2}(\beta_2).
\end{aligned}$$
\end{cor}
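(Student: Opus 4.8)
The plan is to obtain both identities by summing the formula of Theorem~\ref{mainthmC1} over the KLR idempotents and then regrouping the resulting finite sums; apart from this bookkeeping the only extra input needed is the trivial count $\#\{({\bf s}^1,{\bf s}^2)\in D^2(n)\mid |{\bf s}^1|=k\}=\binom{n}{k}$, together with the observation that $n=|\beta|$ throughout since $\mu\in I^\beta$.

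First I would prove the formula for $\dim\RR^\Lam(\beta)e(\mu)$. As the $e(\nu)$, $\nu\in I^\beta$, are pairwise orthogonal idempotents with $\sum_\nu e(\nu)=1$, one has $\RR^\Lam(\beta)e(\mu)=\bigoplus_{\nu\in I^\beta}e(\nu)\RR^\Lam(\beta)e(\mu)$, so $\dim\RR^\Lam(\beta)e(\mu)=\sum_{\nu\in I^\beta}\dim e(\nu)\RR^\Lam(\beta)e(\mu)$. Inserting Theorem~\ref{mainthmC1} and interchanging the order of summation, this equals the sum over all pairs $\big(({\bf s}^1,{\bf s}^2),({\bf t}^1,{\bf t}^2)\big)\in D^2(n)\times D^2(n)$ and all $\nu\in I^\beta$ with $\beta_{\nu_{{\bf s}^i}}=\beta_{\mu_{{\bf t}^i}}$ ($i=1,2$) of $\dim e(\nu_{{\bf s}^1})\RR^{\Lam^1}(\beta_{\nu_{{\bf s}^1}})e(\mu_{{\bf t}^1})\cdot\dim e(\nu_{{\bf s}^2})\RR^{\Lam^2}(\beta_{\nu_{{\bf s}^2}})e(\mu_{{\bf t}^2})$. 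Now I fix $({\bf t}^1,{\bf t}^2)$ and $({\bf s}^1,{\bf s}^2)$ and put $\gamma_i:=\beta_{\mu_{{\bf t}^i}}$ (so $\gamma_1+\gamma_2=\beta$). The constraint forces $|{\bf s}^i|=|{\bf t}^i|=|\gamma_i|$, and --- this is the one point deserving care --- since $({\bf s}^1,{\bf s}^2)$ is an ordered partition of $\{1,\dots,n\}$, the restriction map $\nu\mapsto(\nu_{{\bf s}^1},\nu_{{\bf s}^2})$ is a bijection from $\{\nu\in I^\beta\mid\beta_{\nu_{{\bf s}^i}}=\gamma_i\}$ onto $I^{\gamma_1}\times I^{\gamma_2}$. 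Hence the inner sum over $\nu$ factors as $\big(\sum_{\eta\in I^{\gamma_1}}\dim e(\eta)\RR^{\Lam^1}(\gamma_1)e(\mu_{{\bf t}^1})\big)\big(\sum_{\eta\in I^{\gamma_2}}\dim e(\eta)\RR^{\Lam^2}(\gamma_2)e(\mu_{{\bf t}^2})\big)=\dim\RR^{\Lam^1}(\beta_{\mu_{{\bf t}^1}})e(\mu_{{\bf t}^1})\cdot\dim\RR^{\Lam^2}(\beta_{\mu_{{\bf t}^2}})e(\mu_{{\bf t}^2})$, which is independent of $({\bf s}^1,{\bf s}^2)$. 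Since there are exactly $\binom{n}{|{\bf t}^1|}=\binom{|\beta|}{|{\bf t}^1|}$ choices of $({\bf s}^1,{\bf s}^2)\in D^2(n)$ with $|{\bf s}^1|=|{\bf t}^1|$ (and none with $|{\bf s}^1|\neq|{\bf t}^1|$ contributes), summing over $({\bf s}^1,{\bf s}^2)$ produces the coefficient $\binom{|\beta|}{|{\bf t}^1|}$, giving the first displayed formula.

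Next I would deduce the formula for $\dim\RR^\Lam(\beta)$ from the one just proved by summing over $\mu$, using $\dim\RR^\Lam(\beta)=\sum_{\mu\in I^\beta}\dim\RR^\Lam(\beta)e(\mu)$. After interchanging sums and fixing $({\bf t}^1,{\bf t}^2)\in D^2(n)$, the same factorisation argument applied to $\mu\mapsto(\mu_{{\bf t}^1},\mu_{{\bf t}^2})$ shows that $\sum_{\mu\in I^\beta}\dim\RR^{\Lam^1}(\beta_{\mu_{{\bf t}^1}})e(\mu_{{\bf t}^1})\dim\RR^{\Lam^2}(\beta_{\mu_{{\bf t}^2}})e(\mu_{{\bf t}^2})=\sum_{\beta_1+\beta_2=\beta,\;|\beta_1|=|{\bf t}^1|}\dim\RR^{\Lam^1}(\beta_1)\dim\RR^{\Lam^2}(\beta_2)$. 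Finally, grouping the pairs $({\bf t}^1,{\bf t}^2)\in D^2(n)$ by the value $k=|{\bf t}^1|$ introduces a factor $\binom{n}{k}$, which combines with the $\binom{|\beta|}{|{\bf t}^1|}=\binom{|\beta|}{k}$ already present (using $n=|\beta|$) to give $\binom{|\beta|}{k}^2=\binom{|\beta|}{|\beta_1|}^2$; this is exactly the second displayed formula.

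I do not expect a genuine obstacle here: the whole argument is a rearrangement of finite sums. The only step that needs real attention is the factorisation/independence step --- verifying that restricting a residue sequence to a fixed ordered partition of $\{1,\dots,n\}$ yields the asserted bijection onto a product of sets $I^{\gamma_i}$ --- together with carefully tracking the two binomial coefficients and consistently replacing $n$ by $|\beta|$.
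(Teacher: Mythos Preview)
Your proposal is correct and takes essentially the same approach as the paper: sum Theorem~\ref{mainthmC1} over $\nu\in I^\beta$, then regroup using the bijection between constrained $\nu$'s and $I^{\gamma_1}\times I^{\gamma_2}$ together with the count $\binom{|\beta|}{|{\bf t}^1|}$ of shuffles of the right size; the second identity is obtained by summing the first over $\mu$ in the same way. The only cosmetic difference is the order of bookkeeping---the paper counts, for fixed $({\bf t}^1,{\bf t}^2)$ and fixed $(\bi,\bj)\in I^{\gamma_1}\times I^{\gamma_2}$, the number of triples $(\nu,{\bf s}^1,{\bf s}^2)$ with $\nu_{{\bf s}^i}=\bi,\bj$, whereas you fix $({\bf s}^1,{\bf s}^2)$ first and use the restriction bijection---but these are two phrasings of the same double-counting.
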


\begin{proof} Applying Theorem \ref{mainthmC1}, we can get that $$\begin{aligned}
\dim\RR^\Lam(\beta)e(\mu)&=\sum_{\nu\in I^{\beta}}\sum_{\substack{\big(({\bf s}^1,{\bf s}^2),({\bf t}^1,{\bf t}^2)\big)\in D^2(\nu,\mu)}}\dim e(\nu_{{\bf s}^1})\RR^{\Lam^1}(\beta_{\nu_{{\bf s}^1}})e(\mu_{{\bf t}^1})\\
&\qquad\qquad\qquad \times\dim e(\nu_{{\bf s}^2})\RR^{\Lam^2}(\beta_{\nu_{{\bf s}^2}})e(\mu_{{\bf t}^2}).
\end{aligned}
$$
Note that the for any $\bi\in I^{\beta_{\mu_{{\bf t}^1}}}, \bj\in I^{\beta_{\mu_{{\bf t}^2}}}$, the number of triples $(\nu,{\bf s}^1,{\bf s}^2)$ such that
$({\bf s}^1,{\bf s}^2)\in D^2(n)$, $\nu\in I^\beta$, $\nu_{{\bf s}^1}=\bi$ and $\nu_{{\bf s}^2}=\bj$, is exactly $\begin{pmatrix}|\beta|\\ |{\bf s}^1|\end{pmatrix}=\begin{pmatrix}|\beta|\\ |{\bf t}^1|\end{pmatrix}$. Hence we get the first equation. The proof of the second equation is similar.
\end{proof}

Generalizing a little further, we call an $l$-tuple $\underline{k}=(k_1,\cdots,k_l)$ of non-negative integers a composition of $n$ with length $l$ if $k_1+\cdots+k_l=n$. We denote by $\mathcal{CP}_n^l$ the set of composition of $n$ with length $l$. For any $\underline{k}\in \mathcal{CP}_n^l$, we define $D^{\underline{k}}(n)$ to be the set of $\underline{k}=(k_1,\cdots,k_l)$-shuffles $({\bf s}^1,\cdots,{\bf s}^l)$ of $(1,2,\cdots,n)$. In particular, ${\bf s}^j$ is a strictly increasing sequence of $k_j$ integers for each $1\leq j\leq l$. Again, we allow some ${\bf s}^i$ to be empty. Now we define $$D^l(n)=:\bigsqcup_{\substack{\underline{k}\in \mathcal{CP}_n^l}}D^{\underline{k}}(n).$$ For any $\mu,\nu\in I^\beta$, we define $$
D^{l}(\nu,\mu):=\{\big(({\bf s}^1,\cdots,{\bf s}^l),({\bf t}^1,\cdots,{\bf t}^l)\big)\in D^{l}(n)\times D^l(n)|\beta_{\nu_{{\bf s}^i}}=\beta_{\mu_{{\bf t}^i}},\,\,i=1,\cdots,l\}.$$

\begin{cor}\label{genaralizetion}
Suppose $\Lam=\Lam^1+\cdots+\Lam^l$, where $\Lam^i\in P^+$ for each $1\leq i\leq l$. Then
$$\begin{aligned}
\dim e(\nu)\RR^\Lam e(\mu)&=\sum_{\substack{\big(({\bf s}^1,\cdots,{\bf s}^l),({\bf t}^1,\cdots,{\bf t}^l)\big)\in D^l(\nu,\mu)}} \dim e(\nu_{{\bf s}^1})\RR^{\Lam^1}(\beta_{\mu_{{\bf t}^1}})e(\mu_{{\bf t}^1})\times\cdots\\
&\qquad\qquad \times \dim e(\nu_{{\bf s}^l})\RR^{\Lam^l}(\beta_{\mu_{{\bf t}^l}})e(\mu_{{\bf t}^l})\\
\dim \RR^\Lam e(\mu)&=\sum_{({\bf t}^1,\cdots,{\bf t}^l)\in D^l(n)}\frac{(|{\bf t}^1|+\cdots+|{\bf t}^l|)!}{|{\bf t}^1|!\cdots|{\bf t}^l|!} \dim \RR^{\Lam^1}(\beta_{\mu_{{\bf t}^1}})e(\mu_{{\bf t}^1})\times\cdots\\
&\qquad\qquad\times\dim \RR^{\Lam^l}(\beta_{\mu_{{\bf t}^l}})e(\mu_{{\bf t}^l})
\end{aligned}$$
\end{cor}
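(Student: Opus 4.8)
The plan is to induct on $l$, the case $l=1$ being trivial and the case $l=2$ being exactly Theorem \ref{mainthmC1} (for the first formula) and Corollary \ref{level2cor} (for the second). For the inductive step, set $\Lam':=\Lam^1+\cdots+\Lam^{l-1}\in P^+$, so that $\Lam=\Lam'+\Lam^l$. Applying Theorem \ref{mainthmC1} to this two-term decomposition expresses $\dim e(\nu)\RR^\Lam(\beta)e(\mu)$ as a sum over $\big(({\bf a}^1,{\bf a}^2),({\bf b}^1,{\bf b}^2)\big)\in D^2(\nu,\mu)$ of $\dim e(\nu_{{\bf a}^1})\RR^{\Lam'}(\beta_{\nu_{{\bf a}^1}})e(\mu_{{\bf b}^1})\cdot\dim e(\nu_{{\bf a}^2})\RR^{\Lam^l}(\beta_{\nu_{{\bf a}^2}})e(\mu_{{\bf b}^2})$. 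I would then apply the induction hypothesis to the first factor, expanding $\dim e(\nu_{{\bf a}^1})\RR^{\Lam'}(\beta_{\nu_{{\bf a}^1}})e(\mu_{{\bf b}^1})$ as a sum over $D^{l-1}(\nu_{{\bf a}^1},\mu_{{\bf b}^1})$ of products of the factors $\dim e((\nu_{{\bf a}^1})_{{\bf s}^i})\RR^{\Lam^i}(\beta_{(\mu_{{\bf b}^1})_{{\bf t}^i}})e((\mu_{{\bf b}^1})_{{\bf t}^i})$ for $1\le i\le l-1$.

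The combinatorial core of the argument is then the transitivity of shuffles. A $2$-shuffle $({\bf a}^1,{\bf a}^2)$ of $(1,\dots,n)$ together with an $(l-1)$-shuffle $({\bf s}^1,\dots,{\bf s}^{l-1})$ of the increasing sequence ${\bf a}^1$ combine, by re-reading each ${\bf s}^i$ as a subsequence of $(1,\dots,n)$, to a unique $l$-shuffle $({\bf s}^1,\dots,{\bf s}^{l-1},{\bf a}^2)$ of $(1,\dots,n)$; conversely every $l$-shuffle of $(1,\dots,n)$ arises exactly once this way, by grouping its first $l-1$ blocks. The same statement holds on the $\mu$-side for $({\bf b}^1,{\bf b}^2)$ and $({\bf t}^1,\dots,{\bf t}^{l-1})$. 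Moreover, by Definition \ref{ssize} the iterated restriction $(\nu_{{\bf a}^1})_{{\bf s}^i}$ coincides with $\nu_{{\bf s}^i}$ (and likewise on the $\mu$-side), and the matching conditions $\beta_{\nu_{{\bf a}^1}}=\beta_{\mu_{{\bf b}^1}}$, $\beta_{\nu_{{\bf a}^2}}=\beta_{\mu_{{\bf b}^2}}$ together with $\beta_{(\nu_{{\bf a}^1})_{{\bf s}^i}}=\beta_{(\mu_{{\bf b}^1})_{{\bf t}^i}}$ for $i\le l-1$ are precisely equivalent, after setting ${\bf s}^l={\bf a}^2$ and ${\bf t}^l={\bf b}^2$, to $\beta_{\nu_{{\bf s}^i}}=\beta_{\mu_{{\bf t}^i}}$ for all $1\le i\le l$, i.e. to $\big(({\bf s}^1,\dots,{\bf s}^l),({\bf t}^1,\dots,{\bf t}^l)\big)\in D^l(\nu,\mu)$. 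Substituting and reindexing along this bijection yields the first displayed formula.

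For the second formula, I would sum the first over all $\nu\in I^\beta$. Fixing $({\bf t}^1,\dots,{\bf t}^l)\in D^l(n)$ and residue sequences $\bi^j\in I^{\beta_{\mu_{{\bf t}^j}}}$ for $1\le j\le l$, the number of pairs $\big(\nu,({\bf s}^1,\dots,{\bf s}^l)\big)$ with $\nu\in I^\beta$, $({\bf s}^1,\dots,{\bf s}^l)\in D^l(n)$ and $\nu_{{\bf s}^j}=\bi^j$ for all $j$ equals the number of shuffles reassembling the blocks, namely $\frac{n!}{|{\bf t}^1|!\cdots|{\bf t}^l|!}=\frac{(|{\bf t}^1|+\cdots+|{\bf t}^l|)!}{|{\bf t}^1|!\cdots|{\bf t}^l|!}$; this is exactly the counting step used in the proof of Corollary \ref{level2cor}. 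Summing $\dim e(\nu_{{\bf s}^j})\RR^{\Lam^j}(\beta_{\mu_{{\bf t}^j}})e(\mu_{{\bf t}^j})$ over all choices of $\bi^j=\nu_{{\bf s}^j}$ collapses the $j$-th factor to $\dim\RR^{\Lam^j}(\beta_{\mu_{{\bf t}^j}})e(\mu_{{\bf t}^j})$, and the second formula follows. The only genuine difficulty in the whole argument is the bookkeeping of the shuffle bijection in the second paragraph --- verifying that iterated $2$- and $(l-1)$-shuffles biject with $l$-shuffles on both the source and target sides, compatibly with the $\beta$-matching conditions and with the subsequence operation $\nu\mapsto\nu_{\bf s}$; once that lemma is isolated, everything else is a direct substitution into Theorem \ref{mainthmC1} and the induction hypothesis.
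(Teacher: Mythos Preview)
Your proposal is correct and follows exactly the approach the paper indicates: the paper's entire proof reads ``This follows from Theorem \ref{mainthmC1}, Corollary \ref{level2cor} and an induction on $l$,'' and you have simply unpacked that induction, including the shuffle-transitivity bijection and the multinomial counting step (which is the direct $l$-part analogue of the argument in Corollary \ref{level2cor}).
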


\begin{proof} This follows from Theorem \ref{mainthmC1}, Corollary \ref{level2cor} and an induction on $l$.
\end{proof}

\medskip
\noindent
{\textbf{Proof of Theorem \ref{mainthmC}}}: This follows from Corollary \ref{genaralizetion} or induction on $l$ and using Corollary \ref{level2cor}.\qed
\medskip

\begin{rem}
The Level reduction formula does not hold for graded dimension.  For example, we consider $NH_1^2$, i.e. the cyclotomic nilHecke algebra. Then we have $$\begin{aligned}
\dim_q NH_1^2&=1+q^2\\
&\neq \dim_q NH_1^1 \dim_q NH_0^1+\dim_q NH_1^1 \dim_q NH_0^1=1+1.
\end{aligned}$$
\end{rem}

Corollary \ref{genaralizetion} and Theorem \ref{mainthmC} give us a way to compute the dimensions of higher level cyclotomic quiver Hecke algebras via the dimensions of some lower level (e.g., level $1$) cyclotomic quiver Hecke algebras. Using the combinatoric of shifted Young diagrams and Fock space realizations, Ariki and Park have given a dimension formula of finite quiver Hecke algebra (i.e., $\RR^{\Lam_0}(\beta)$) of type $A^{(2)}_{2k}$ in \cite[Theorem 3.4]{AP14}. Now using corollary \ref{genaralizetion}, we can generalize their combinatorial formula to $\RR^{l\Lam_0}(\beta)$, $l\in \N$ without Fock space realizations. Corollary \ref{genaralizetion} also sheds some light on the construction of higher level Fock spaces of arbitrary type via the tensor products of some level $1$ Fock spaces.

\begin{cor}\label{sufficient} Let $\Lam^i\in P^+, \beta_i\in Q^+$ for each $1\leq i\leq l$. Assume $\nu^i\in I^{\beta_i}$ and $e(\nu^i)\neq 0$ in $\RR^{\Lam^i}(\beta^i)$ for each $1\leq i\leq l$. Then $e(\nu)\neq 0$ in $\RR^{\Lam^1+\cdots+\Lam^l}(\beta_1+\cdots+\beta_l)$, for any $\nu\in\,{\rm Shuff}(\nu^1,\cdots,\nu^l)$, where ${\rm Shuff}(\nu^1,\cdots,\nu^l)$ means the set of all possible shuffles of $\nu^1,\cdots,\nu^l$.
\end{cor}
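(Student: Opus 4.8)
The plan is to deduce this from the level reduction formula in Corollary \ref{genaralizetion}, together with the trivial positivity of every term appearing there. Write $\Lam = \Lam^1+\cdots+\Lam^l$, $\beta = \beta_1+\cdots+\beta_l$ and $n = |\beta|$. As noted in the proof of Theorem \ref{mainthmB}, $e(\nu)\neq 0$ in $\RR^\Lam(\beta)$ is equivalent to $e(\nu)\RR^\Lam(\beta)e(\nu)\neq 0$, that is, to $\dim e(\nu)\RR^\Lam(\beta)e(\nu)>0$; so it is enough to prove the latter inequality.

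First I would apply Corollary \ref{genaralizetion} with $\mu=\nu$, which writes $\dim e(\nu)\RR^\Lam(\beta)e(\nu)$ as a sum over $D^l(\nu,\nu)$ of products $\prod_{i=1}^l \dim e(\nu_{{\bf s}^i})\RR^{\Lam^i}(\beta_{\nu_{{\bf t}^i}})e(\nu_{{\bf t}^i})$. The point to emphasize is that each factor here is the dimension of a $K$-vector space, hence a non-negative integer; therefore the whole sum is $\geq 0$, and it is strictly positive as soon as one of its summands is.

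Next I would use the hypothesis that $\nu\in{\rm Shuff}(\nu^1,\cdots,\nu^l)$. By the definition of a shuffle this provides a $(|\beta_1|,\cdots,|\beta_l|)$-shuffle $({\bf s}^1,\cdots,{\bf s}^l)$ of $(1,2,\cdots,n)$ with $\nu_{{\bf s}^i}=\nu^i$ for every $1\leq i\leq l$; in particular $({\bf s}^1,\cdots,{\bf s}^l)\in D^l(n)$. Taking ${\bf t}^i:={\bf s}^i$ one checks at once that $\beta_{\nu_{{\bf s}^i}} = \beta_{\nu^i} = \beta_i = \beta_{\nu_{{\bf t}^i}}$, so the pair $\big(({\bf s}^1,\cdots,{\bf s}^l),({\bf t}^1,\cdots,{\bf t}^l)\big)$ belongs to $D^l(\nu,\nu)$, and the corresponding summand in Corollary \ref{genaralizetion} is exactly $\prod_{i=1}^l \dim e(\nu^i)\RR^{\Lam^i}(\beta_i)e(\nu^i)$. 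Since $e(\nu^i)\neq 0$ in $\RR^{\Lam^i}(\beta_i)$ by assumption, every factor is $>0$, so this summand is positive; hence $\dim e(\nu)\RR^\Lam(\beta)e(\nu)>0$ and $e(\nu)\neq 0$ in $\RR^{\Lam^1+\cdots+\Lam^l}(\beta_1+\cdots+\beta_l)$, as required.

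I do not expect any real difficulty here: the only thing to be slightly careful with is the combinatorial bookkeeping that identifies the ``unshuffling'' datum with a genuine element of $D^l(\nu,\nu)$ and confirms that the attached factor is $\dim e(\nu^i)\RR^{\Lam^i}(\beta_i)e(\nu^i)$ and not something else, which is immediate from the definitions of $D^l(\nu,\mu)$, $\nu_{\bf s}$ and $\beta_\nu$ in (\ref{betanu}) and Definition \ref{ssize}. Observe finally that the result genuinely relies on working with ungraded dimensions: the graded analogue of Corollary \ref{genaralizetion} fails (cf. the Remark following Theorem \ref{mainthmC}), so one cannot hope to upgrade the conclusion to a statement about graded nonvanishing by the same argument.
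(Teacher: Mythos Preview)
Your proof is correct and follows essentially the same approach as the paper: both apply Corollary \ref{genaralizetion} with $\mu=\nu$, observe that all summands are nonnegative dimensions, and note that the shuffle hypothesis produces a particular summand equal to $\prod_i \dim e(\nu^i)\RR^{\Lam^i}(\beta_i)e(\nu^i)>0$. You have simply spelled out the bookkeeping that the paper leaves implicit.
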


\begin{proof}
By assumption, $\dim e(\nu^1)\RR^{\Lam^1}(\beta_1)e(\nu^1)\cdots\dim e(\nu^l)\RR^{\Lam^l}(\beta_l)e(\nu^l)\neq 0$. Applying Corollary \ref{genaralizetion}, we deduce that $e(\nu)\neq 0$ in $\RR^{\Lam^1+\cdots+\Lam^l}(\beta_1+\cdots+\beta_l)$.
\end{proof}

\begin{cor}\label{necessary}
Suppose $e(\nu)\neq 0$ in $\RR^{\Lam}(\beta)$. Write $\Lam=\Lam^1+\cdots+\Lam^l$ to be a sum of $l$ dominant weights with lower levels. Then there exists $\nu^1,\cdots,\nu^l$, where $\nu^i\in I^{\beta_i},$ and $\beta_1+\cdots+\beta_l=\beta$, such that $e(\nu^i)\neq 0$ in $\RR^{\Lam^i}(\beta_i)$, $i=1,\cdots,l$ and $\nu$ is a shuffle of $\nu_1,\cdots,\nu_l$.
\end{cor}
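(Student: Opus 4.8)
The plan is to read off Corollary \ref{necessary} as the exact converse of Corollary \ref{sufficient}, using the level reduction identity of Corollary \ref{genaralizetion} together with the obvious fact that every summand appearing there is the dimension of a genuine subspace, hence a non-negative integer. In fact the whole content of the statement is that, in the level reduction formula, non-vanishing of the left-hand side propagates to at least one product on the right, and then to each factor of that product.

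Concretely, first I would specialise Corollary \ref{genaralizetion} to the case $\mu=\nu$. Since $e(\nu)\neq 0$ in $\RR^\Lam(\beta)$ is equivalent to $\dim e(\nu)\RR^\Lam(\beta)e(\nu)>0$, and by Corollary \ref{genaralizetion} this dimension equals
\[
\sum_{\big(({\bf s}^1,\cdots,{\bf s}^l),({\bf t}^1,\cdots,{\bf t}^l)\big)\in D^l(\nu,\nu)}\ \prod_{i=1}^{l}\dim e(\nu_{{\bf s}^i})\RR^{\Lam^i}(\beta_{\nu_{{\bf s}^i}})e(\nu_{{\bf t}^i}),
\]
a sum of non-negative integers, at least one summand must be strictly positive. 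I would fix a pair $\big(({\bf s}^1,\cdots,{\bf s}^l),({\bf t}^1,\cdots,{\bf t}^l)\big)\in D^l(\nu,\nu)$ realising such a positive summand, and set $\nu^i:=\nu_{{\bf s}^i}$ and $\beta_i:=\beta_{\nu_{{\bf s}^i}}$ for $1\leq i\leq l$. Positivity of the $i$-th factor forces $e(\nu_{{\bf s}^i})\RR^{\Lam^i}(\beta_i)e(\nu_{{\bf t}^i})\neq 0$, hence in particular $e(\nu^i)\neq 0$ in $\RR^{\Lam^i}(\beta_i)$.

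It then remains only to check the shuffle bookkeeping. Because $({\bf s}^1,\cdots,{\bf s}^l)$ is a $\underline{k}$-shuffle of $(1,2,\cdots,n)$ for some $\underline{k}\in\mathcal{CP}_n^l$, the sequences ${\bf s}^1,\cdots,{\bf s}^l$ are disjoint and their union, as index sets, is $\{1,\cdots,n\}$; hence $\beta_1+\cdots+\beta_l=\sum_{j=1}^{n}\alpha_{\nu_j}=\beta$, and reinserting the entries of $\nu^i=\nu_{{\bf s}^i}$ at the positions recorded by ${\bf s}^i$ recovers $\nu$ exactly, so $\nu\in{\rm Shuff}(\nu^1,\cdots,\nu^l)$. (The constraint $\beta_{\nu_{{\bf s}^i}}=\beta_{\nu_{{\bf t}^i}}$ built into the definition of $D^l(\nu,\nu)$ guarantees that $\RR^{\Lam^i}(\beta_i)$ is the common block in which both $e(\nu_{{\bf s}^i})$ and $e(\nu_{{\bf t}^i})$ live, so the factor is meaningful.) There is no genuine obstacle in this argument; if one prefers to avoid invoking Corollary \ref{genaralizetion} directly, the same proof can be phrased as a downward induction on $l$ using only the two-term reduction of Theorem \ref{mainthmC1}, which is how Corollary \ref{genaralizetion} is itself obtained.
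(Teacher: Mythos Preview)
Your proof is correct and follows exactly the approach of the paper: the paper's proof simply reads ``This follows directly from Corollary \ref{genaralizetion},'' and what you have written is precisely the unpacking of that sentence. Your additional bookkeeping (non-negativity of each summand, extraction of a positive product, and the shuffle reconstruction from $({\bf s}^1,\cdots,{\bf s}^l)$) makes explicit what the paper leaves implicit, but there is no difference in strategy.
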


\begin{proof} This follows directly from Corollary \ref{genaralizetion}.
\end{proof}

The following corollary gives a third criterion for $e(\nu)\neq 0$ in $\RR^\Lam(\beta)$. In type $A$ or type $C$, this follows from the Fock space realizations. Our result here is valid for {\it arbitrary} symmetrizable Cartan matrix.

\begin{cor}\label{maincorC3}
Let $\beta\in I^n, \nu\in I^\beta$. Assume $\Lam=\Lam_{t_1}+\cdots+\Lam_{t_l}$, where $t_i\in I$ for each $1\leq i\leq l$. Then $e(\nu)\neq 0$ in $\RR^\Lam_{\beta}$ if and only if $\nu$ is a shuffle of some $l$-tuple $(\nu^1,\nu^2,\cdots,\nu^l)$, such that $\beta=\beta_{\nu^1}+\cdots+\beta_{\nu^l}$, and $e(\nu^i)\neq 0$ in $\RR^{\Lam_{t_i}}(\beta_{\nu^i})$.
\end{cor}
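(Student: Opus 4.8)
The plan is to deduce this directly from the level reduction results established above, specialised to the case where every summand of $\Lam$ has level one. First I would record the elementary observation that the level-one dominant integral weights are precisely the fundamental weights $\Lam_i$, $i\in I$; consequently, writing $\Lam=\sum_{i\in I}\langle\Lam,h_i\rangle\Lam_i$, one obtains a decomposition $\Lam=\Lam_{t_1}+\cdots+\Lam_{t_l}$ with $l=\ell(\Lam)$ and each $t_i\in I$, and conversely every decomposition of the form $\Lam=\Lam_{t_1}+\cdots+\Lam_{t_l}$ with $t_i\in I$ is of this shape. Thus the hypothesis ``$\Lam=\Lam_{t_1}+\cdots+\Lam_{t_l}$ with $t_i\in I$'' is exactly the hypothesis ``$\Lam=\Lam^1+\cdots+\Lam^l$ with each $\Lam^i$ of level one''.

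For the ``if'' direction, I would apply Corollary \ref{sufficient} with $\Lam^i:=\Lam_{t_i}$ and $\beta_i:=\beta_{\nu^i}$: given that $\nu$ is a shuffle of $(\nu^1,\dots,\nu^l)$ with $\beta=\beta_{\nu^1}+\cdots+\beta_{\nu^l}$ and $e(\nu^i)\neq 0$ in $\RR^{\Lam_{t_i}}(\beta_{\nu^i})$ for every $i$, Corollary \ref{sufficient} immediately yields $e(\nu)\neq 0$ in $\RR^{\Lam_{t_1}+\cdots+\Lam_{t_l}}(\beta_{\nu^1}+\cdots+\beta_{\nu^l})=\RR^\Lam(\beta)$. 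For the ``only if'' direction, I would apply Corollary \ref{necessary} to the fixed decomposition $\Lam=\Lam_{t_1}+\cdots+\Lam_{t_l}$: if $e(\nu)\neq 0$ in $\RR^\Lam(\beta)$, the corollary produces $\nu^1,\dots,\nu^l$ with $\nu^i\in I^{\beta_i}$, $\beta_1+\cdots+\beta_l=\beta$, and $e(\nu^i)\neq 0$ in $\RR^{\Lam_{t_i}}(\beta_i)$, such that $\nu$ is a shuffle of $(\nu^1,\dots,\nu^l)$; since $\nu^i\in I^{\beta_i}$ forces $\beta_i=\beta_{\nu^i}$, this is precisely the asserted decomposition.

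In truth there is no genuine obstacle here: the statement is a verbatim specialisation of Corollaries \ref{sufficient} and \ref{necessary}, which in turn follow from the level reduction Theorem \ref{mainthmC1} (via Corollary \ref{genaralizetion}) and the dimension formula of Corollary \ref{maincor1}. The only points warranting a remark are the identification of level-one weights with fundamental weights and the bookkeeping identity $\beta_i=\beta_{\nu^i}$ for $\nu^i\in I^{\beta_i}$. The conceptual content --- that non-vanishing of $e(\nu)$ in $\RR^\Lam(\beta)$ can be tested ``one fundamental weight at a time'' --- is already packaged into those earlier statements, so the write-up should be short.
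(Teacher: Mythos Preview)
Your proposal is correct and follows exactly the paper's own approach: the paper's proof reads in its entirety ``The necessary part follows from Corollary \ref{necessary} and the sufficient part follows from Corollary \ref{sufficient}.'' Your additional remarks on level-one weights and the identity $\beta_i=\beta_{\nu^i}$ are harmless elaborations of the same argument.
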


\begin{proof}
The necessary part follows from Corollary \ref{necessary} and the sufficient part follows from Corollary \ref{sufficient}.
\end{proof}

\bigskip

\section{Monomial bases of $e(\wnu)\RR^\Lam(\beta)e(\mu)$ and $e(\mu)\RR^\Lam(\beta)e(\wnu)$}

Throughout this section, we fix $p\in\N$, $\fb:=(b_1,\cdots,b_p)\in\N^p$ and $\nu^1,\cdots,\nu^p\in I$ such that $\nu^i\neq\nu^j$ for any $1\leq i\neq j\leq p$ and $\sum_{i=1}^{p}b_i=n$. We define \begin{equation}\label{wnu}
\wnu=(\wnu_1,\cdots,\wnu_n):=\bigl(\underbrace{\nu^1,\cdots,\nu^1}_{\text{$b_1$ copies}},\cdots,\underbrace{\nu^p,\cdots,\nu^p}_{\text{$b_p$ copies}}\bigr)\in I^\beta ,
\end{equation}
where $\beta\in Q_n^+$. We call the $b_i$-tuple $(\underbrace{\nu^{i},\nu^{i},\cdots,\nu^{i}}_{b_{i}})$ the $i$th part of $\wnu$. As before, we set $b_0:=0, c_t:=\sum_{i=0}^{t}b_i$ for any $0\leq t\leq p$. The purpose of this section is to construct monomial bases for the subspaces $e(\wnu)\RR^\Lam(\beta)e(\mu)$ and $e(\mu)\RR^\Lam(\beta)e(\wnu)$ for arbitrary $\mu\in I^\beta$. In particular, we shall give the proof of our fourth main result Theorem \ref{mainthmD}.

\smallskip
\subsection{The case when $\mu=\wnu$}

The purpose of this section is to construct monomial bases for the subspace $e(\wnu)\RR^\Lam(\beta)e(\wnu)$.

\begin{dfn} For each $1\leq t\leq p$, we define $$
N^\Lam_t(\wnu):=N^\Lam(1,\wnu,c_{t-1}+1).
$$
\end{dfn}

Our assumption that $\nu^i\neq\nu^j$ for any $1\leq i\neq j\leq p$ implies that $\Sym(\wnu,\wnu)$ is the standard Young subgroup
$\Sym_{\fb}:=\Sym_{\{1,\cdots,c_1\}}\times\cdots\times\Sym_{\{c_{p-1}+1,\cdots,n\}}$ of $\Sym_n$. Moreover, since $\nu^t\neq\nu^j$ for any $1\leq j<t$, it follows from the original definition (\ref{Ndef}) that \begin{equation}\label{positive}
N^\Lam_t(\wnu)\geq 0,\quad\forall\,1\leq t\leq p.
\end{equation}




%

\begin{thm}\label{mainthm2a}  Let $\Lam\in P^+$ be arbitrary. Let $\beta\in Q_n^+$ such that $\wnu\in I^\beta$. Then we have
$$ \dim e(\wnu)\RR^{\Lam}(\beta)e(\wnu)=\prod_{i=1}^{p}\Bigl(b_{i}!\prod_{j=0}^{b_i-1}(N^\Lam_{i}(\wnu)-j)\Bigr). $$
In particular, $e(\wnu)\neq 0$ if and only if $N^\Lam_{i}(\wnu)\geq b_{i}$ for any $1\leq i\leq p$.
\end{thm}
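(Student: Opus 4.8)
The plan is to derive Theorem~\ref{mainthm2a} directly from the second dimension formula of Theorem~\ref{mainthm1b}, specialised to $\nu=\wnu$. First I would note that the hypothesis $\nu^i\neq\nu^j$ for $i\neq j$ forces $\Sym(\wnu,\wnu)$ to equal the standard Young subgroup $\Sym_{\fb}=\Sym_{\{1,\dots,c_1\}}\times\cdots\times\Sym_{\{c_{p-1}+1,\dots,n\}}$; consequently the set $\mathcal{D}(\wnu)=\mathcal{D}_{\fb}\cap\Sym(\wnu,\wnu)$ of minimal-length left $\Sym_{\fb}$-coset representatives that happen to lie in $\Sym_{\fb}$ is just $\{1\}$. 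Hence Theorem~\ref{mainthm1b} collapses to
$$\dim e(\wnu)\RR^{\Lam}(\beta)e(\wnu)=\Bigl(\prod_{i=1}^{p}b_i!\Bigr)\prod_{t=1}^{n}\widetilde{N}^\Lam(1,\wnu,t),$$
so everything reduces to evaluating $\widetilde{N}^\Lam(1,\wnu,t)$ for each $t$.

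The second step is this evaluation. For $w=1$ we have $J_1^{<t}=\{1,\dots,t-1\}$, so if $c_{i-1}<t\le c_i$ then the indices $j\in J_1^{<t}$ with $\wnu_j=\nu^i$ are precisely $c_{i-1}+1,\dots,t-1$, a total of $t-1-c_{i-1}$ of them; since $\langle\alpha_{\nu^i},h_{\nu^i}\rangle=2$, the definition~(\ref{Ndef}) gives $N^\Lam(1,\wnu,t)=N^\Lam_i(\wnu)-2(t-1-c_{i-1})$, where I have used that $N^\Lam_i(\wnu)=N^\Lam(1,\wnu,c_{i-1}+1)$. Substituting into~(\ref{Ntilde}) yields $\widetilde{N}^\Lam(1,\wnu,t)=N^\Lam_i(\wnu)-(t-c_{i-1}-1)$. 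Letting $t$ range over $c_{i-1}+1,\dots,c_i$ and writing $j=t-c_{i-1}-1\in\{0,1,\dots,b_i-1\}$, the factors coming from the $i$th part of $\wnu$ multiply to $\prod_{j=0}^{b_i-1}(N^\Lam_i(\wnu)-j)$; regrouping the product over all $t$ by parts and reinserting the factorials gives the asserted formula.

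For the final assertion: $e(\wnu)\neq 0$ in $\RR^\Lam(\beta)$ if and only if $e(\wnu)\RR^\Lam(\beta)e(\wnu)\neq 0$, i.e.\ if and only if the right-hand side of the dimension formula is nonzero, which happens exactly when each factor $\prod_{j=0}^{b_i-1}(N^\Lam_i(\wnu)-j)$ is nonzero. By~(\ref{positive}) we have $N^\Lam_i(\wnu)\ge 0$, so this product vanishes precisely when $N^\Lam_i(\wnu)\in\{0,1,\dots,b_i-1\}$; hence it is nonzero exactly when $N^\Lam_i(\wnu)\ge b_i$, in which case all of its $b_i$ factors are $\ge 1$ and the product is in fact positive. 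Taking the conjunction over $i$ gives the stated criterion.

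I do not anticipate a genuine obstacle: the one point needing care is the index bookkeeping in the second step (matching the ``shift'' $t-c_{i-1}-1$ of~(\ref{Ntilde}) with the range of $j$, and correctly counting which $j<t$ lie in the same part as $t$), together with the verification that $\mathcal{D}(\wnu)=\{1\}$. As a cross-check I would also run the computation straight from Corollary~\ref{maincor1}: expand $\sum_{w\in\Sym_{\fb}}\prod_t N^\Lam(w,\wnu,t)$, apply Lemma~\ref{independ} with $d=1$ to see that the $i$th-part factors depend only on $w_i$ and equal $N^\Lam_i(\wnu)-2|\tilde{J}^{<k}_{w_i}|$, and then use the bijection $\theta_{b_i}$ of Lemma~\ref{bij} to evaluate $\sum_{w_i}\prod_k(N^\Lam_i(\wnu)-2|\tilde{J}^{<k}_{w_i}|)=\prod_{j=1}^{b_i}\sum_{m=0}^{j-1}(N^\Lam_i(\wnu)-2m)=\prod_{j=1}^{b_i}j\,(N^\Lam_i(\wnu)-j+1)$, recovering the same product.
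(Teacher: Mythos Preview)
Your proposal is correct and follows essentially the same approach as the paper: both derive the first part from Theorem~\ref{mainthm1b} (you supply the explicit verification that $\mathcal{D}(\wnu)=\{1\}$ and the evaluation of $\widetilde{N}^\Lam(1,\wnu,t)$, which the paper leaves implicit), and both deduce the ``in particular'' statement from~(\ref{positive}) by observing that $0\le N^\Lam_i(\wnu)\le b_i-1$ forces a zero factor in $\prod_{j=0}^{b_i-1}(N^\Lam_i(\wnu)-j)$. Your cross-check via Corollary~\ref{maincor1} and Lemmas~\ref{independ}, \ref{bij} is also sound and is in fact precisely the computation underlying the paper's proof of Theorem~\ref{mainthm1b} itself, so it adds nothing new but confirms consistency.
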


\begin{proof}
The first part of the theorem follows from  Theorem \ref{mainthm1b}.

We now consider the second part. If $N^\Lam_{i}(\wnu)\geq b_{i}$ for any $1\leq i\leq p$, then by the first part of the theorem we have $\dim e(\wnu)\RR^{\Lam}(\beta)e(\wnu)>0$. In particular, $e(\wnu)\neq 0$. Conversely, suppose that
$N^\Lam_{i}(\wnu)\leq b_{i}-1$ for some $1\leq i\leq p$. By (\ref{positive}), $N^\Lam_i(\wnu)\geq 0$ for any $1\leq i\leq p$. It follows that $0$ must appears as a factor in the product $\prod_{j=0}^{b_i-1}(N^\Lam_{i}(\wnu)-j)$. Hence $\dim e(\wnu)\RR^{\Lam}(\beta)e(\wnu)=0$, which implies that $e(\wnu)=0$. This completes the proof of the second part and hence the whole theorem.
\end{proof}



Let $1\leq a<n$. Following \cite[(3.6)]{KK}, we define the operator $\partial_{a}$ on $$
\bigoplus_{\mu\in I^\beta}K[x_1,\cdots,x_n]e(\mu)\subset\RR(\beta)$$ by $$
\partial_a f:=\frac{s_a(f)-f}{x_a-x_{a+1}}\sum_{\substack{\mu\in I^\beta\\ \mu_a=\mu_{a+1}}}e(\mu),\,\,\, \forall\,f\in K[x_{1},x_{2},\cdots,x_{n}]e(\mu).
$$

\begin{lem}\label{Lemma 8}
Let $\beta\in Q_n^+$, $f\in K[x_{1},x_{2},\cdots,x_{n}]$, and $\nu\in I^{\beta}$ such that $\nu_{k}=\nu_{k+1}$, where $1\leq k<n$. If we have $fe(\nu)=0$ in $\RR^{\Lam}(\beta)$, then $\partial_{k}(f)e(\nu)=0$ in $\RR^{\Lam}(\beta)$.
\end{lem}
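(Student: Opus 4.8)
The plan is to exploit the Demazure‑type behaviour of $\psi_k$ on the polynomial part of $\RR^\Lam(\beta)$, together with the two facts that become available precisely because $\nu_k=\nu_{k+1}$: first, $s_k\nu=\nu$, so $\psi_k e(\nu)=e(s_k\nu)\psi_k=e(\nu)\psi_k$; and second, $\psi_k^2 e(\nu)=Q_{\nu_k,\nu_{k+1}}(x_k,x_{k+1})e(\nu)=0$, since $Q_{i,i}=0$.

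First I would establish, for every $g\in K[x_1,\dots,x_n]$, the identity
\[
\psi_k\,g\,e(\nu)=(s_k g)\,\psi_k e(\nu)+\partial_k(g)\,e(\nu),
\]
proved by induction on $\deg g$ from the KLR relations, which for $\nu_k=\nu_{k+1}$ read $\psi_k x_k e(\nu)=(x_{k+1}\psi_k-1)e(\nu)$, $\psi_k x_{k+1}e(\nu)=(x_k\psi_k+1)e(\nu)$ and $\psi_k x_j e(\nu)=x_j\psi_k e(\nu)$ for $j\ne k,k+1$, together with the twisted Leibniz rule $\partial_k(gh)=(s_kg)\partial_k(h)+\partial_k(g)h$. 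Along the way I would record the elementary facts $\partial_k(s_kf)=-\partial_k(f)$, that $\partial_k(f)$ is $s_k$‑invariant, and $\partial_k^2=0$; applying the displayed identity twice then yields $\psi_k^2\,g\,e(\nu)=g\,\psi_k^2 e(\nu)=0$ for all $g$.

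Next I would feed in the hypothesis $fe(\nu)=0$. Right‑multiplying by $\psi_k$ and using $e(\nu)\psi_k=\psi_k e(\nu)$ gives $f\psi_k e(\nu)=0$. Applying the displayed identity with $g=s_kf$ (and $\partial_k(s_kf)=-\partial_k(f)$) rewrites $f\psi_k e(\nu)=\psi_k(s_kf)e(\nu)+\partial_k(f)e(\nu)$, hence $\psi_k(s_kf)e(\nu)=-\partial_k(f)e(\nu)$. Left‑multiplying this by $\psi_k$ annihilates the left side, because $\psi_k^2(s_kf)e(\nu)=0$, while the right side becomes $-\partial_k(f)\psi_k e(\nu)$ since $\partial_k(f)$ is $s_k$‑invariant and $\partial_k^2=0$. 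Therefore $\partial_k(f)\,e(\nu)\,\psi_k=0$.

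The final step is the one I expect to be the real obstacle: from $\partial_k(f)e(\nu)\psi_k=0$ one cannot simply cancel the trailing $\psi_k$ to conclude. To get around this I would right‑multiply by $x_{k+1}$ and use $e(\nu)\psi_k x_{k+1}=\psi_k x_{k+1}e(\nu)=(x_k\psi_k+1)e(\nu)$ (valid as $\nu_k=\nu_{k+1}$), obtaining
\[
0=\partial_k(f)\,(x_k\psi_k+1)e(\nu)=x_k\bigl(\partial_k(f)e(\nu)\psi_k\bigr)+\partial_k(f)e(\nu)=\partial_k(f)e(\nu),
\]
since the first summand vanishes by the previous paragraph. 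This completes the proof. Everything is routine once the Demazure identity and the observation $\psi_k^2 e(\nu)=0$ are in place; the only genuine subtlety is the non‑cancellation of $\psi_k$ on the right, which is what forces the concluding multiplication by $x_{k+1}$ (or an equivalent device).
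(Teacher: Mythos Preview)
Your argument is correct. The paper itself does not give a proof but simply invokes \cite[Lemma~4.2]{KK} with $M=\RR^\Lam(\beta)$; your write-up is a self-contained derivation of exactly that fact in the special case of the regular module, using the same ingredients (the intertwining identity $\psi_k g\,e(\nu)=(s_kg)\psi_k e(\nu)+\partial_k(g)e(\nu)$, which is \cite[(3.6)--(3.7)]{KK}, together with $\psi_k^2 e(\nu)=0$). The final trick of right-multiplying by $x_{k+1}$ to strip off the trailing $\psi_k$ is clean and works as you wrote it, since $e(\nu)\psi_k x_{k+1}=x_k e(\nu)\psi_k+e(\nu)$ when $\nu_k=\nu_{k+1}$.
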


\begin{proof} This follows from \cite[Lemma 4.2]{KK} by taking $M=\RR^\Lam(\beta)$ there.
\end{proof}

\begin{lem}\label{Lemma 9} Let $p_1:=a^{\Lambda}_{\nu^{1}}(x_{1})$. For any $1<i\leq p$, we set $$
p_{c_{i-1}+1}=a^{\Lambda}_{\nu^{i}}(x_{c_{i-1}+1})\prod_{\substack{t=1}}^{i-1}\prod_{d=c_{t-1}+1}^{c_{t}}Q_{\nu^{t},\nu^{i}}(x_{d},x_{c_{i-1}+1}). $$
Then $p_{c_{i-1}+1}\in  \RR^\Lam(\beta)$ is a polynomial in $x_{c_{i-1}+1}$ of degree $N_{i}(\wnu)$ with leading coefficient in $K^\times$ and other coefficients in $K[x_{1},x_{2},\cdots,x_{c_{i-1}}]$. Moreover, $p_{c_{i-1}+1}e(\wnu)$ is a zero element in $e(\wnu)\RR^\Lam({\beta})e(\wnu)$.
\end{lem}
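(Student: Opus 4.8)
The plan is to verify the three assertions—degree, leading coefficient, and vanishing in $e(\wnu)\RR^\Lam(\beta)e(\wnu)$—in that order, since the degree computation is the purely combinatorial part and the vanishing is where the cyclotomic relation and Lemma \ref{Lemma 8} enter. First I would compute $\deg p_{c_{i-1}+1}e(\wnu)$ as a polynomial in $x_{c_{i-1}+1}$. The factor $a^\Lam_{\nu^i}(x_{c_{i-1}+1})=x_{c_{i-1}+1}^{\<\Lam,h_{\nu^i}\>}$ contributes $\<\Lam,h_{\nu^i}\>$. For each pair $(t,d)$ with $1\le t<i$ and $c_{t-1}+1\le d\le c_t$, the polynomial $Q_{\nu^t,\nu^i}(x_d,x_{c_{i-1}+1})$, regarded as a polynomial in its second variable $u=x_{c_{i-1}+1}$, has degree $-a_{\nu^i,\nu^t}=-\<\alpha_{\nu^t},h_{\nu^i}\>$ (using $Q_{i,j}(u,v)=Q_{j,i}(v,u)$ and the normalization recalled after Definition \ref{D:QuiverRelations}, together with $\nu^t\ne\nu^i$ so that $Q_{\nu^t,\nu^i}\ne 0$), with leading coefficient $t_{\nu^t,\nu^i;0,-a_{\nu^i\nu^t}}\in K^\times$; the remaining coefficients are polynomials in $x_d$ alone. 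Summing over the $b_t$ values of $d$ in the $t$th part and over $t=1,\dots,i-1$ gives total degree $\<\Lam,h_{\nu^i}\>-\sum_{t=1}^{i-1}b_t\<\alpha_{\nu^t},h_{\nu^i}\>$. Since the first $c_{i-1}$ entries of $\wnu$ consist of $b_t$ copies of $\nu^t$ for $t=1,\dots,i-1$, and $J_1^{<c_{i-1}+1}=\{1,\dots,c_{i-1}\}$, this is exactly $\<\Lam-\sum_{j=1}^{c_{i-1}}\alpha_{\wnu_j},h_{\nu^i}\>=N^\Lam(1,\wnu,c_{i-1}+1)=N_i(\wnu)$ by Definition \ref{keydfn1}. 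The leading coefficient in $x_{c_{i-1}+1}$ is the product of the $K^\times$-leading coefficients, hence lies in $K^\times$, and all lower-degree coefficients involve only $x_1,\dots,x_{c_{i-1}}$ by construction; this settles the first two claims.

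Next I would prove $p_{c_{i-1}+1}e(\wnu)=0$ in $\RR^\Lam(\beta)$. The idea is to "transport" the cyclotomic relation $a^\Lam_{\nu^i}(x_1)e(\bi)=0$ — which holds for any $\bi\in I^\beta$ with $\bi_1=\nu^i$ — to the position $c_{i-1}+1$ by repeatedly commuting past the generators $\psi_r$, picking up exactly the $Q$-factors. Concretely, for a single adjacent transposition: if $\mu,s_r\mu\in I^\beta$ with $\mu_r\ne\mu_{r+1}$, then using $\psi_r^2e(\mu)=Q_{\mu_r,\mu_{r+1}}(x_r,x_{r+1})e(\mu)$ together with the relations $\psi_r x_{r+1}e(\mu)=(x_r\psi_r+\delta_{\mu_r\mu_{r+1}})e(\mu)=x_r\psi_r e(\mu)$ one shows inductively that $x_{r+1}^m e(s_r\mu)$ "is obtained from" $x_r^m e(\mu)$ after multiplication by the appropriate power of $\psi_r$, and more to the point that vanishing of $f(x_{r+1})e(\bi)$ for a suitable $\bi$ propagates. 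The cleaner route, and the one I would follow, is to start from $a^\Lam_{\nu^i}(x_{c_{i-1}+1})e(\bi)=0$ for $\bi=(\nu^i,\wnu_1,\dots,\widehat{\phantom{x}},\dots)$-type sequences having $\nu^i$ in the first slot, then move $\nu^i$ leftward-to-rightward through the first $c_{i-1}$ entries one at a time; each time it passes an entry equal to some $\nu^t$ ($t<i$), the square relation $\psi^2 e=Q_{\nu^t,\nu^i}(x_\bullet,x_\bullet)e$ forces a factor $Q_{\nu^t,\nu^i}(x_d,x_{c_{i-1}+1})$ to appear. After moving past all $c_{i-1}$ earlier entries, the accumulated factor is precisely $\prod_{t=1}^{i-1}\prod_{d=c_{t-1}+1}^{c_t}Q_{\nu^t,\nu^i}(x_d,x_{c_{i-1}+1})$, and the ideal element $a^\Lam_{\nu^i}(x_{c_{i-1}+1})e(\bi)$ has been converted into $p_{c_{i-1}+1}e(\wnu)$ modulo the cyclotomic ideal. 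Alternatively, one can cite the description of the cyclotomic ideal in \cite{KK} directly: $p_{c_{i-1}+1}e(\wnu)$ is manifestly a $K$-linear combination of elements of the form (polynomial)$\cdot a^\Lam_{\nu^i}(x_{c_{i-1}+1})e(\bi)\cdot$(polynomial) after applying the defining relations.

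The main obstacle I anticipate is bookkeeping in the second step: making precise the claim that "moving the $i$th block's first strand past the earlier blocks multiplies by exactly these $Q$-factors and nothing else," i.e.\ that no lower-order correction terms $\delta_{\mu_r\mu_{r+1}}$ survive. This is where $\nu^t\ne\nu^i$ for $t<i$ is crucial — it kills every Kronecker delta that would otherwise appear in $\psi_r x_{r+1}e(\mu)$ and in the braid-like relations — so the commutations are "clean." One must also be careful that the polynomial $p_{c_{i-1}+1}$, as written, already lies in $\RR^\Lam(\beta)$ rather than only $\RR(\beta)$; this is automatic since it is a polynomial in the $x_j$'s times $e(\wnu)$, and such elements descend to the cyclotomic quotient. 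Finally, Lemma \ref{Lemma 8} is not strictly needed for this lemma but records the compatibility of the divided-difference operators $\partial_a$ with the cyclotomic relations, which will presumably be used in the sequel to identify $e(\wnu)\RR^\Lam(\beta)e(\wnu)$ with a tensor product of cyclotomic nilHecke algebras; for the present statement the three bullet points above suffice.
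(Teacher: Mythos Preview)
Your approach is correct and matches the paper's: compute the degree directly from the normalization of the $Q_{i,j}$, then obtain the vanishing by sandwiching the cyclotomic relation $a^\Lam_{\nu^i}(x_1)e(\widehat\nu)=0$ (with $\widehat\nu$ having $\nu^i$ in the first slot) between $\psi_{c_{i-1}}\cdots\psi_1$ on the left and $\psi_1\cdots\psi_{c_{i-1}}$ on the right, using $\nu^i\ne\nu^t$ to commute $x_1$ cleanly and to reduce the $\psi$-sandwich to the product of $Q$'s. One slip: where you write ``start from $a^\Lam_{\nu^i}(x_{c_{i-1}+1})e(\bi)=0$'' you mean $x_1$, since the cyclotomic relation lives at the first strand; and Lemma~\ref{Lemma 8} indeed plays no role here (it is used only in the next step, Proposition~\ref{Prop 2}).
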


\begin{proof}
The first part is a direct computation. For the last part, just consider
$\psi_{c_{i-1}}\psi_{c_{i-1}-1}\cdots\psi_{1}a^{\Lambda}_{\nu^{i}}(x_{1})e(\widehat{\nu})\psi_{1}\psi_{2}\cdots\psi_{c_{i-1}}$, where $\widehat{\nu}$ is the $n$-tuple obtained by moving the $(c_{i-1}+1)$-th component of $\wnu$ (which is exactly $\nu^{i}$) to the first part and unchanging the relative positions of all the other components. By definition $a^{\Lambda}_{\nu^{i}}(x_{1})e(\widehat{\nu})=0$ in $\RR^\Lam(\beta)$. On the other hand, since $\nu^i\neq\nu^t$ for any $1\leq t<i$, we have that $$
\psi_{c_{i-1}}\psi_{c_{i-1}-1}\cdots\psi_{1}a^{\Lambda}_{\nu^{i}}(x_{1})e(\widehat{\nu})=
a^{\Lambda}_{\nu^{i}}(x_{c_{i-1}+1})\psi_{c_{i-1}}\psi_{c_{i-1}-1}\cdots\psi_{1}e(\widehat{\nu}).
$$
Finally, the lemma follows because $$
\psi_{c_{i-1}}\psi_{c_{i-1}-1}\cdots\psi_{1}e(\widehat{\nu})\psi_{1}\psi_{2}\cdots\psi_{c_{i-1}}
=\prod_{\substack{t=1}}^{i-1}\prod_{d=c_{t-1}+1}^{c_{t}}Q_{\nu^{t},\nu^{i}}(x_{d},x_{c_{i-1}+1}),
$$
where again we have used the assumption that $\nu^i\neq\nu^t$ for any $1\leq t<i$.
\end{proof}

\begin{prop}\label{Prop 2} Let $1\leq i\leq p$. For any integer $k$ which satisfies $c_{i-1}<k\leq c_{i}$, there exists a monic polynomial $p_{k}$ in $x_{k}$ of degree $N_{i}(\wnu)-(k-c_{i-1}-1)$ with coefficients in $K[x_{1},x_{2},\cdots,x_{k-1}]$. Moreover,
$p_{k}e(\wnu)$ is a zero element in $e(\wnu)\RR^\Lam({\beta})e(\wnu)$.
\end{prop}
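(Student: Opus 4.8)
The plan is to induct on $k$, sweeping through the $i$th part of $\wnu$ from its first index $k=c_{i-1}+1$ to its last index $k=c_i$: the base case is exactly Lemma \ref{Lemma 9}, and each inductive step is effected by applying the Demazure-type operator $\partial_k$ together with Lemma \ref{Lemma 8}. Before starting I would dispose of the degenerate situation: if $e(\wnu)=0$ in $\RR^\Lam(\beta)$ then $e(\wnu)\RR^\Lam(\beta)e(\wnu)=0$ and there is nothing to construct, so I may assume $e(\wnu)\neq 0$. Then Theorem \ref{mainthm2a} gives $N^\Lam_i(\wnu)\geq b_i$ for every $i$, so every exponent $N^\Lam_i(\wnu)-(k-c_{i-1}-1)$ that occurs for $c_{i-1}<k\leq c_i$ is at least $1$, and the notion of ``monic polynomial of that degree'' is unproblematic.

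For the base case $k=c_{i-1}+1$ (so $k-c_{i-1}-1=0$), Lemma \ref{Lemma 9} already produces $p_{c_{i-1}+1}$: a polynomial in $x_{c_{i-1}+1}$ of degree $N^\Lam_i(\wnu)$ whose leading coefficient lies in $K^\times$ and whose lower coefficients lie in $K[x_1,\dots,x_{c_{i-1}}]$, with $p_{c_{i-1}+1}e(\wnu)=0$ in $e(\wnu)\RR^\Lam(\beta)e(\wnu)$. I would simply rescale by the invertible leading coefficient to make $p_{c_{i-1}+1}$ monic; this does not affect the vanishing.

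For the inductive step, suppose $p_k$ has been built for some $c_{i-1}<k<c_i$, monic in $x_k$ of degree $d:=N^\Lam_i(\wnu)-(k-c_{i-1}-1)\geq 1$, with coefficients in $K[x_1,\dots,x_{k-1}]$ and $p_ke(\wnu)=0$ in $e(\wnu)\RR^\Lam(\beta)e(\wnu)$. Since $c_{i-1}<k$ and $k+1\leq c_i$ we have $\wnu_k=\wnu_{k+1}=\nu^i$, so Lemma \ref{Lemma 8} gives $\partial_k(p_k)e(\wnu)=0$ in $\RR^\Lam(\beta)$. The only thing to verify is that $\partial_k$ lowers the $x$-degree by exactly one while keeping the leading coefficient a unit and the lower coefficients in the right subring: writing $p_k=x_k^{d}+\sum_{a<d}g_ax_k^a$ with $g_a\in K[x_1,\dots,x_{k-1}]$ (hence fixed by $s_k$), and using $\partial_k(x_k^a)=-\sum_{j=0}^{a-1}x_k^{a-1-j}x_{k+1}^j$, one sees that, regarded as a polynomial in $x_{k+1}$ with coefficients in $K[x_1,\dots,x_k]$, $\partial_k(p_k)$ has degree $d-1$ and leading coefficient $-1$. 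I would then set $p_{k+1}:=-\partial_k(p_k)$, which is monic in $x_{k+1}$ of degree $d-1=N^\Lam_i(\wnu)-\bigl((k+1)-c_{i-1}-1\bigr)$, has coefficients in $K[x_1,\dots,x_k]$, and satisfies $p_{k+1}e(\wnu)=0$ in $e(\wnu)\RR^\Lam(\beta)e(\wnu)$; this completes the induction.

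I do not expect a genuine obstacle here. All of the real work is already contained in Lemma \ref{Lemma 9} (whose proof uses the KLR relations---commuting the $\psi$'s past $a^\Lam_{\nu^i}(x_1)$ and the quadratic relation $\psi_r^2e(\nu)=Q_{\nu_r,\nu_{r+1}}(x_r,x_{r+1})e(\nu)$---to relocate the cyclotomic relation onto the $(c_{i-1}+1)$st strand) and in Lemma \ref{Lemma 8} (which transports vanishing through $\partial_k$). Granting those two inputs, the proposition reduces to the short degree and leading-coefficient bookkeeping for $\partial_k$ indicated above, which is elementary.
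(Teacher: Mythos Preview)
Your proposal is correct and follows exactly the same approach as the paper: induct on $k$ through the $i$th block, with the base case $k=c_{i-1}+1$ supplied by Lemma~\ref{Lemma 9} (after rescaling) and the inductive step by applying $\partial_k$ together with Lemma~\ref{Lemma 8}. Your version is simply more explicit---you spell out the degree and leading-coefficient bookkeeping for $\partial_k$ and preface the argument with the observation (via Theorem~\ref{mainthm2a}) that $e(\wnu)\neq 0$ forces $N^\Lam_i(\wnu)\geq b_i$, so all the target degrees are at least~$1$---but the strategy is identical to the paper's.
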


\begin{proof}
By Lemma $\ref{Lemma 9}$, we see that, up to a scalar in $K^\times$, $p_{c_{i-1}+1}$ satisfies the requirement for $k=c_{i-1}+1$. We take $p_{c_{i-1}+2}=\partial_{c_{i-1}+1}(f)e(\wnu)$. Then by Lemma $\ref{Lemma 8}$, it's easy to see that $p_{c_{i-1}+2}$ also satisfies the requirement for $k=c_{i-1}+2$. In general, the proposition follows from an induction on $k$.
\end{proof}

\begin{thm}\label{mainthm2b} The following set \begin{equation}\label{base1}
\Bigl\{\psi_{w }\prod_{k=1}^{n}x_{k}^{r_{k}}e(\wnu)\Bigm|\begin{matrix}\text{$w\in\Sym_{\fb}$, for any $1\leq i\leq p$, $c_{i-1}<k\leq c_{i}$,}\\
\text{$r_{k}\in\{0,1,\cdots,N^\Lam_{i}(\wnu)-(k-c_{i-1})\}$}\end{matrix}\Bigr\}
\end{equation} forms a $K$-basis of $e(\wnu)\RR^\Lam({\beta})e(\wnu)$.
\end{thm}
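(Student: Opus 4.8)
The plan is to prove that the proposed set spans $e(\wnu)\RR^\Lam(\beta)e(\wnu)$ and that it has the right cardinality, which by Theorem~\ref{mainthm2a} forces linear independence. First I would count: the number of elements in the set~(\ref{base1}) is exactly $\prod_{i=1}^{p}\bigl(b_i!\,\prod_{j=0}^{b_i-1}(N^\Lam_i(\wnu)-j)\bigr)$, since for each $i$ the choices of $w_i\in\Sym_{\{c_{i-1}+1,\dots,c_i\}}$ contribute $b_i!$, and the exponents $r_k$ for $c_{i-1}<k\le c_i$ range over $N^\Lam_i(\wnu)-(k-c_{i-1})$ values each (note $k-c_{i-1}$ runs over $1,\dots,b_i$), giving $\prod_{j=0}^{b_i-1}(N^\Lam_i(\wnu)-j)$. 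This matches $\dim e(\wnu)\RR^\Lam(\beta)e(\wnu)$ from Theorem~\ref{mainthm2a}; when some $N^\Lam_i(\wnu)<b_i$ both sides are zero and there is nothing to prove, so we may assume $N^\Lam_i(\wnu)\ge b_i$ for all $i$.

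The substance is the spanning statement. I would start from the standard monomial description of $\RR(\beta)$: every element of $e(\wnu)\RR(\beta)e(\wnu)$ is a $K$-linear combination of elements $\psi_w\,x_1^{a_1}\cdots x_n^{a_n}e(\wnu)$ with $w\in\Sym(\wnu,\wnu)=\Sym_\fb$ (using $\nu^i\ne\nu^j$), $w$ in a fixed reduced form, and $a_k\ge 0$ arbitrary. Passing to the cyclotomic quotient $\RR^\Lam(\beta)$, the task is to show that the exponents $a_k$ can be reduced modulo the relations to lie in the stated ranges. The key input is Proposition~\ref{Prop 2}: for each $i$ and each $k$ with $c_{i-1}<k\le c_i$, there is a monic polynomial $p_k$ in $x_k$ of degree $N^\Lam_i(\wnu)-(k-c_{i-1}-1)$ with coefficients in $K[x_1,\dots,x_{k-1}]$ such that $p_k e(\wnu)=0$ in $e(\wnu)\RR^\Lam(\beta)e(\wnu)$. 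Thus in $e(\wnu)\RR^\Lam(\beta)e(\wnu)$ one can rewrite $x_k^{N^\Lam_i(\wnu)-(k-c_{i-1}-1)}e(\wnu)$ as a combination of lower powers of $x_k$ times polynomials in $x_1,\dots,x_{k-1}$. Applying this repeatedly, processing the variables $x_n, x_{n-1},\dots,x_1$ in decreasing order (so that reducing $x_k$ only introduces smaller-index variables, which have already been or will be handled), one reduces any monomial $x_1^{a_1}\cdots x_n^{a_n}e(\wnu)$ to a combination of $\prod_k x_k^{r_k}e(\wnu)$ with $0\le r_k < N^\Lam_i(\wnu)-(k-c_{i-1}-1)$, i.e.\ $r_k\le N^\Lam_i(\wnu)-(k-c_{i-1})$, for $c_{i-1}<k\le c_i$. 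This handles the pure-polynomial part $e(\wnu)K[x_1,\dots,x_n]e(\wnu)$; combining with the $\psi_w$'s on the left gives the span of~(\ref{base1}).

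One subtlety to address carefully: after moving a reduced word $\psi_w$ past a polynomial, or after using $p_k e(\wnu)=0$, the coefficients that appear are polynomials in $x_1,\dots,x_{k-1}$ times possibly $\psi$-terms of shorter length, so I would set up the reduction as a two-step induction — outer induction on $\ell(w)$ and, for fixed $w$, inner induction on the exponent tuple $(a_n,\dots,a_1)$ in the lexicographic order reading from $a_n$ downward — and invoke the straightening/commutation relations of Definition~\ref{D:QuiverRelations} to keep everything in the claimed normal form. Once spanning is established, the cardinality count together with Theorem~\ref{mainthm2a} gives that the spanning set is in fact a basis: a spanning set of size equal to the dimension is linearly independent. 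The main obstacle I anticipate is bookkeeping in the reduction step — verifying that reducing $x_k$ via $p_k$ never increases the effective degree in any $x_m$ with $m\ge k$ and never lengthens $w$, so that the induction is genuinely well-founded; the algebra is routine but the ordering must be chosen correctly for the argument to terminate.
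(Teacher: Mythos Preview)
Your proposal is correct and follows essentially the same route as the paper: use Proposition~\ref{Prop 2} to show that the set~(\ref{base1}) spans $e(\wnu)\RR^\Lam(\beta)e(\wnu)$, then invoke the dimension count of Theorem~\ref{mainthm2a} to conclude that it is a basis. One simplification: the subtlety you flag at the end is not really there, because each relation $p_k e(\wnu)=0$ from Proposition~\ref{Prop 2} is a purely polynomial identity in $K[x_1,\dots,x_n]e(\wnu)$ (no $\psi$'s), so for fixed $w$ you can reduce $x_1^{a_1}\cdots x_n^{a_n}e(\wnu)$ entirely within the commutative polynomial part and then left-multiply by $\psi_w$ --- no shorter $\psi$-terms are produced and the single lexicographic induction on $(a_n,\dots,a_1)$ suffices.
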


\begin{proof}
Applying Proposition $\ref{Prop 2}$, we see that the elements in the above set (\ref{base1}) span the $K$-linear space $e(\wnu)\RR^\Lam({\beta})e(\wnu)$. Counting the dimensions and using Theorem \ref{mainthm2a}, we see the set (\ref{base1}) must be a $K$-basis of $e(\wnu)\RR^\Lam({\beta})e(\wnu)$. This proves the theorem.
\end{proof}

\begin{cor} We have that $$
\dim_q e(\wnu)\RR^\Lam({\beta})e(\wnu)=\prod_{i=1}^{p}\Bigl(\prod_{k=1}^{b_i}\frac{q_{\nu_k}^{-2k}-1}{q_{\nu_k}^{-2}-1}
\prod_{t=c_{i-1}+1}^{c_i}(1+q_{\nu_t}^{2}+\cdots+q_{\nu_t}^{2(N^\Lam_i(\wnu)-t)})\Bigr).
$$
\end{cor}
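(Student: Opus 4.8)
The plan is to read off the graded dimension directly from the monomial basis~(\ref{base1}) furnished by Theorem~\ref{mainthm2b}, tracking the $\Z$-grading of each basis element. Indeed $\dim_q e(\wnu)\RR^\Lam(\beta)e(\wnu)$ is the generating function, by degree, of the set~(\ref{base1}), so it remains only to compute those degrees and to sum the resulting powers of $q$.

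First I would compute the degree of a typical basis element $\psi_w\prod_{k=1}^n x_k^{r_k}e(\wnu)$. By the grading conventions of Section~2, $\deg\bigl(x_k^{r_k}e(\wnu)\bigr)=r_k(\alpha_{\wnu_k}|\alpha_{\wnu_k})=2r_kd_{\wnu_k}$. For the $\psi_w$-part, recall that under our hypotheses $\Sym(\wnu,\wnu)=\Sym_\fb=\Sym_{b_1}\times\cdots\times\Sym_{b_p}$, so every reduced word for $w=w_1\cdots w_p$ (with $w_j\in\Sym_{b_j}$) uses only the Coxeter generators $s_k$ of $\Sym_\fb$, i.e., reflections whose two adjacent positions lie in a common part; the two positions swapped by such an $s_k$ carry the same label $\nu^j$, so multiplying by $\psi_k$ fixes the running $n$-tuple and contributes $-(\alpha_{\nu^j}|\alpha_{\nu^j})=-2d_{\nu^j}$ to the degree (cf. Lemma~\ref{deg1}). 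Hence $\deg\bigl(\psi_we(\wnu)\bigr)=-\sum_{j=1}^p 2d_{\nu^j}\ell(w_j)$, and the total degree of a basis element splits as a sum of $p$ contributions, the $j$-th one depending only on $w_j$ and on the exponents $r_k$ with $k$ in the $j$-th part.

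Since the indexing set of~(\ref{base1}) is itself a product over $j$ of the choices of $w_j\in\Sym_{b_j}$ and of the tuples $(r_k)_{c_{j-1}<k\leq c_j}$ with $0\leq r_k\leq N^\Lam_j(\wnu)-(k-c_{j-1})$, the graded dimension factors as
$$\dim_q e(\wnu)\RR^\Lam(\beta)e(\wnu)=\prod_{j=1}^p\Bigl(\bigl(\sum_{w_j\in\Sym_{b_j}}q_{\nu^j}^{-2\ell(w_j)}\bigr)\prod_{c_{j-1}<k\leq c_j}\bigl(\sum_{r=0}^{N^\Lam_j(\wnu)-(k-c_{j-1})}q_{\nu^j}^{2r}\bigr)\Bigr).$$
Now I would invoke two elementary identities: the Poincar\'e polynomial $\sum_{w\in\Sym_{b_j}}t^{\ell(w)}=\prod_{k=1}^{b_j}\frac{t^k-1}{t-1}$, applied with $t=q_{\nu^j}^{-2}$, and the geometric sum $\sum_{r=0}^{m}q_{\nu^j}^{2r}=1+q_{\nu^j}^2+\cdots+q_{\nu^j}^{2m}$. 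Substituting, using that $q_{\wnu_k}=q_{\nu^j}$ for every $k$ in the $j$-th part, and writing the inner product over $t=k$ ranging from $c_{j-1}+1$ to $c_j$, yields exactly the asserted formula.

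I do not anticipate a real obstacle: the argument is just the observation that the basis~(\ref{base1}), the degree function, and the constraints $0\leq r_k\leq N^\Lam_j(\wnu)-(k-c_{j-1})$ all decompose along the $p$ parts of $\wnu$, combined with two standard generating-function identities. The only delicate points are the sign in the $\psi_w$-degree and the alignment of the ranges of the $r_k$ with the indexing of the claimed product (equivalently, the convention that $N^\Lam_j(\wnu)$ is read off at the top of the $j$-th part). As a sanity check, specialising $q\to 1$ turns the Poincar\'e factor into $b_j!$ and the $k$-th geometric sum into $N^\Lam_j(\wnu)-(k-c_{j-1})+1$, so the product collapses to $\prod_{j=1}^p\bigl(b_j!\prod_{i=0}^{b_j-1}(N^\Lam_j(\wnu)-i)\bigr)$, in agreement with Theorem~\ref{mainthm2a}.
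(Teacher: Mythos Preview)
Your proposal is correct and is precisely the paper's approach: the paper's proof is the single sentence ``This follows from Theorem~\ref{mainthm2b},'' and you have simply filled in the straightforward computation of the degree generating function from the monomial basis~(\ref{base1}). One minor caution: as your own computation and $q\to 1$ sanity check reveal, the top exponent in the printed statement should read $2\bigl(N^\Lam_i(\wnu)-(t-c_{i-1})\bigr)$ rather than $2\bigl(N^\Lam_i(\wnu)-t\bigr)$ (and $q_{\nu_k}$ should be $q_{\nu^i}$), so your claim to obtain ``exactly the asserted formula'' is correct up to this indexing slip in the statement itself, not in your argument.
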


\begin{proof} This follows from Theorem \ref{mainthm2b}.
\end{proof}

\begin{prop}\label{mainprop2}
There is a $K$-linear isomorphism: $$\gamma:\,\,e(\wnu)\RR^\Lam({\beta})e(\wnu)\,\,\cong\,\,\mathscr{H}^{(0)}_{N^\Lam_{1}(\wnu),b_{1}}\otimes\mathscr{H}^{(0)}_{N^\Lam_{2}(\wnu),b_{2}}\otimes\cdots
\otimes\mathscr{H}^{(0)}_{N^\Lam_{p}(\wnu),b_{p}}. $$
\end{prop}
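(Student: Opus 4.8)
The plan is to read off the isomorphism directly from the monomial basis (\ref{base1}) of Theorem~\ref{mainthm2b}, after noticing that a single cyclotomic nilHecke algebra is itself the $p=1$ specialization of that theorem. Concretely, first I would apply Theorem~\ref{mainthm2b} with $p=1$, $\nu^1=0$, $\fb=(m)$, $\Lam=\ell\Lam_0$, $\beta=m\alpha_0$. In this case $\wnu=(0,\dots,0)$ ($m$ copies), $e(\wnu)=1$ in $\mathscr{H}^{(0)}_{\ell,m}=\RR^{\ell\Lam_0}(m\alpha_0)$, $\Sym_{\fb}=\Sym_m$, and $N^\Lam_1(\wnu)=\<\ell\Lam_0,h_0\>=\ell$, so Theorem~\ref{mainthm2b} yields that
$$\Bigl\{\psi_u\prod_{s=1}^{m}x_s^{a_s}\Bigm| u\in\Sym_m,\ 0\le a_s\le \ell-s\ \text{for}\ 1\le s\le m\Bigr\}$$
is a $K$-basis of $\mathscr{H}^{(0)}_{\ell,m}$ (consistent with Corollary~\ref{maincor2b}; this basis also appears in \cite{HuL}).

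Next I would unwind the index set of (\ref{base1}) for the general $\wnu$ of (\ref{wnu}). Since $\nu^i\neq\nu^j$ for $i\neq j$, we have $\Sym_{\fb}=\Sym_{\{1,\dots,c_1\}}\times\cdots\times\Sym_{\{c_{p-1}+1,\dots,n\}}$, and identifying the $i$-th factor with $\Sym_{b_i}$ via the shift $k\mapsto k-c_{i-1}$, every $w\in\Sym_{\fb}$ factors uniquely as $w=w_1\cdots w_p$ with $w_i\in\Sym_{\{c_{i-1}+1,\dots,c_i\}}$. A reduced word for $w$ is obtained by concatenating reduced words for the $w_i$, which involve pairwise non-adjacent sets of simple transpositions; hence $\psi_w=\psi_{w_1}\cdots\psi_{w_p}$, independently of the order of the factors. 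Moreover, writing $k=c_{i-1}+s$ with $1\le s\le b_i$, the constraint ``$c_{i-1}<k\le c_i$, $0\le r_k\le N^\Lam_i(\wnu)-(k-c_{i-1})$'' becomes exactly $0\le r_{c_{i-1}+s}\le N^\Lam_i(\wnu)-s$, i.e. the constraint attached to the $s$-th polynomial variable of $\mathscr{H}^{(0)}_{N^\Lam_i(\wnu),b_i}$ in the first step. Therefore the index set of (\ref{base1}) is the Cartesian product, over $1\le i\le p$, of the index sets of those monomial bases of $\mathscr{H}^{(0)}_{N^\Lam_i(\wnu),b_i}$.

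Finally I would define $\gamma$ on the basis (\ref{base1}) by
$$\psi_w\prod_{k=1}^{n}x_k^{r_k}e(\wnu)\ \longmapsto\ \bigotimes_{i=1}^{p}\Bigl(\psi_{u_i}\prod_{s=1}^{b_i}x_s^{r_{c_{i-1}+s}}\Bigr),$$
where $(u_1,\dots,u_p)$ corresponds to $w$ under the identification above. By the previous paragraph (and the first step, which makes the right-hand side a pure tensor of basis elements) this sends the basis (\ref{base1}) bijectively onto a $K$-basis of $\mathscr{H}^{(0)}_{N^\Lam_1(\wnu),b_1}\otimes\cdots\otimes\mathscr{H}^{(0)}_{N^\Lam_p(\wnu),b_p}$, and hence extends uniquely to the asserted $K$-linear isomorphism.

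I do not expect a serious obstacle here; the only point requiring care is the bookkeeping in the second step---checking that the ``staircase'' bounds $0\le r_k\le N^\Lam_i(\wnu)-(k-c_{i-1})$ together with the factorization $\psi_w=\psi_{w_1}\cdots\psi_{w_p}$ genuinely decouple term by term across the parts of $\wnu$. I would also remark on why one only claims a \emph{$K$-linear} isomorphism: it is in general not graded once some $d_{\nu^i}>1$ (compare the graded dimension formula in the corollary preceding this proposition), and upgrading it to an algebra isomorphism would be more delicate, since Lemma~\ref{Lemma 9} only produces a monic relation $p_{c_{i-1}+1}e(\wnu)=0$ whose lower-degree coefficients lie in $K[x_1,\dots,x_{c_{i-1}}]$ rather than the naive cyclotomic relation $x_{c_{i-1}+1}^{N^\Lam_i(\wnu)}e(\wnu)=0$, so matching the generators would require a triangular change of the polynomial variables.
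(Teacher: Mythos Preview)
Your proposal is correct and follows essentially the same approach as the paper: define $\gamma$ on the monomial basis (\ref{base1}) by splitting $\psi_w$ and the $x$-monomial along the blocks of $\Sym_{\fb}$ and shifting indices, then observe that this maps basis to basis. The only minor difference is that the paper cites \cite[Theorem~2.34]{HuL} for the monomial basis of each $\mathscr{H}^{(0)}_{N^\Lam_i(\wnu),b_i}$, whereas you obtain that basis more self-containedly as the $p=1$ specialization of Theorem~\ref{mainthm2b}; your additional remarks on why the isomorphism is only $K$-linear (not graded, not an algebra map) go beyond what the paper states but are accurate.
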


\begin{proof} For each $1\leq k\leq p$, we use $\tau_k$ to denote the canonical isomorphism $\Sym_{\{c_{k-1}+1,c_{k-1}+2,\cdots,c_k\}}\cong\Sym_{b_k}$ which is uniquely determined on generators by $s_{c_{k-1}+j}\mapsto s_j,\,\forall\,\leq j<b_k$. We construct a linear map  $$\gamma:\,\,e(\wnu)\RR^\Lam({\beta})e(\wnu)\,\,\rightarrow\,\,\mathscr{H}^{(0)}_{N^\Lam_{1}(\wnu),b_{1}}\otimes\mathscr{H}^{(0)}_{N^\Lam_{2}(\wnu),b_{2}}\otimes\cdots\otimes
\mathscr{H}^{(0)}_{N^\Lam_{p}(\wnu),b_{p}} $$
which sends $\psi_{u_{1}u_{2}\cdots u_{p}}\prod_{k=1}^{n}x_{k}^{r_{k}}e(\wnu)$ to $$
(\psi_{\tau_1(u_1)}X_{1},\psi_{\tau_2(u_2)}X_{2},\cdots,\psi_{\tau_p(u_p)}X_{p}),
$$
where for each $1\leq i\leq p$, $u_i\in\Sym_{\{c_{i-1}+1,\cdots,c_i\}}$ and $X_{i}:=\prod_{k=1}^{b_{i}}x^{r_{k+c_{i-1}}}_{k}$,
and for each $c_{i-1}+1\leq t\leq c_i$, $r_{t}\in\{0,1,\cdots,N_{i}(\wnu)-(t-c_{i-1})\}$. Applying \cite[Theorem 2.34]{HuL}, Theorem \ref{mainthm2b} and Corollary \ref{maincor2a}, one sees that $\gamma$ is a $K$-linear isomorphism.
\end{proof}



\smallskip
\subsection{The general case}

In this subsection we shall construct monomial bases for the subspaces $e(\wnu)\RR^\Lam(\beta)e(\mu)$ and $e(\mu)\RR^\Lam(\beta)e(\wnu)$ for arbitrary $\mu\in I^\beta$.

Recall that we have fixed a special $n$-tuple $\wnu\in I^n$ at the beginning (\ref{wnu}) of this section. Let $\beta\in Q_n^+$ such that $\wnu\in I^\beta$. For any $\mu\in I^\beta$, we can always choose a minimal length right $\Sym_\fb$-coset representative $d_\mu$ of $\Sym_{\fb}$ in $\Sym_n$ such that $d_\mu^{-1}\wnu=\mu$. In particular, $\Sym(\wnu,\mu)=d_{\mu}^{-1}\Sym_{\fb}$ and hence $\Sym(\mu,\wnu)=\Sym_{\fb}d_{\mu}$.

The following crucial definition plays an important role in our later construction of  monomial bases for the subspaces $e(\wnu)\RR^\Lam(\beta)e(\mu)$ and $e(\mu)\RR^\Lam(\beta)e(\wnu)$.

\begin{dfn}\label{weight} Let $\mu=(\mu_1,\cdots,\mu_n)\in I^\beta, 1\leq k\leq n$. We define \begin{equation}\label{wt}
N^\Lam(\mu,k):=N^\Lam(d_\mu,\mu,k)+|\{1\leq j<k|\mu_{j}=\mu_{k}\}|. \end{equation}
\end{dfn}

\begin{examp}\label{toyexamp}
Suppose $\mu=\wnu$, then $d_\mu=1$ and $N^\Lam(\mu,k)=N_{i}(\wnu)-(k-c_{i-1}-1)$ whenever $c_{i-1}<k\leq c_{i}$ for some $1\leq i\leq p$.
\end{examp}

The following result is a crucial ingredient in the proof of our main result in this subsection.
\begin{lem}\label{keylem2} Let $1\leq i\leq p$ and $\mu\in I^\beta$. Let $1\leq t_1<t_2<\cdots<t_{b_i}\leq n$ be the unique $b_i$ integers such that $\mu_{t_j}=\nu^i$.
Let $w=w_1\times\cdots\times w_{p}\in\Sym_\fb$, where $w_{k}\in\Sym_{\{c_{k-1}+1,\cdots,c_{k}\}}$ for each $1\leq k\leq p$. Then for any $1\leq j\leq b_i$, $$
N^\Lam(wd_\mu,\mu,t_{j})=N^\Lam(d_\mu,\mu,t_{j})+2(j-1)-2|\tilde{J}_{w_i}^{<d_\mu(t_j)}|, $$
where $\tilde{J}_{w_i}^{<d_\mu(t_j)}:=\{c_{i-1}+1\leq a<d_\mu(t_j)|w_i(a)<w_i(d_\mu(t_j))\}$.
In particular, $N^\Lam(wd_\mu,\mu,t_{j})$ does not depend on $w_k$ for $1\leq k\neq i\leq p$.
\end{lem}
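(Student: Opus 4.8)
The plan is to mimic the proof of Lemma \ref{independ}, but now with the Young subgroup factor $w$ sitting on the \emph{left} of $d_\mu$ instead of on the right.

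First I would record the normalization forced by minimality of $d_\mu$. Since $d_\mu$ has minimal length in the right coset $\Sym_{\fb}d_\mu=\Sym(\mu,\wnu)$, left multiplication by each simple reflection of $\Sym_{\fb}$ increases length, which means $d_\mu^{-1}$ is increasing on every block $\{c_{k-1}+1,\dots,c_k\}$. As $\mu_a=\wnu_{d_\mu(a)}$ and the $\nu^k$ are pairwise distinct, the positions $a$ with $\mu_a=\nu^i$ are exactly those with $d_\mu(a)\in\{c_{i-1}+1,\dots,c_i\}$; since these positions are $t_1<\cdots<t_{b_i}$ and $d_\mu^{-1}$ is increasing on block $i$, we get $d_\mu(t_j)=c_{i-1}+j$ for all $1\le j\le b_i$. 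In particular, for $a<t_j$, the value $d_\mu(a)$ lies in block $i$ if and only if $a=t_{j'}$ for some $j'<j$, in which case $d_\mu(a)=c_{i-1}+j'$.

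Next I would split $J_{wd_\mu}^{<t_j}$ and $J_{d_\mu}^{<t_j}$ according to the block containing $d_\mu(a)$ and compare them, using $wd_\mu(t_j)=w_i(c_{i-1}+j)\in\{c_{i-1}+1,\dots,c_i\}$. If $d_\mu(a)\le c_{i-1}$ then $wd_\mu(a)=w_k(d_\mu(a))\le c_{i-1}<wd_\mu(t_j)$ and $d_\mu(a)<c_{i-1}+j=d_\mu(t_j)$, so $a$ belongs to both sets; if $d_\mu(a)>c_i$ then $wd_\mu(a)>c_i\ge wd_\mu(t_j)$ and $d_\mu(a)>c_{i-1}+j$, so $a$ belongs to neither; and if $d_\mu(a)$ is in block $i$, i.e. $a=t_{j'}$ with $d_\mu(a)=c_{i-1}+j'$ and $j'<j$, then $a\in J_{d_\mu}^{<t_j}$ always whereas $a\in J_{wd_\mu}^{<t_j}$ precisely when $w_i(c_{i-1}+j')<w_i(c_{i-1}+j)$. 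Hence the two index sets coincide outside block $i$; inside block $i$ they equal $\{t_1,\dots,t_{j-1}\}$ and $\{t_{j'}:j'<j,\ w_i(c_{i-1}+j')<w_i(c_{i-1}+j)\}$ respectively, and the bijection $t_{j'}\mapsto c_{i-1}+j'=d_\mu(t_{j'})$ identifies the latter with $\tilde{J}_{w_i}^{<d_\mu(t_j)}$.

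Finally, subtracting the two defining expressions of Definition \ref{keydfn1} — the contributions of the indices outside block $i$ cancel, and $\mu_{t_{j'}}=\nu^i=\mu_{t_j}$ for all $j'<j$ — leaves
\[
N^\Lam(wd_\mu,\mu,t_j)-N^\Lam(d_\mu,\mu,t_j)=\bigl\langle\,\bigl(j-1-|\tilde{J}_{w_i}^{<d_\mu(t_j)}|\bigr)\alpha_{\nu^i},\,h_{\nu^i}\,\bigr\rangle=2(j-1)-2|\tilde{J}_{w_i}^{<d_\mu(t_j)}|,
\]
since $\langle\alpha_{\nu^i},h_{\nu^i}\rangle=2$. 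This is the claimed identity, and because its right-hand side depends only on $w_i$ and $d_\mu$, the quantity $N^\Lam(wd_\mu,\mu,t_j)$ is independent of $w_k$ for $k\ne i$. The only subtle point is the case analysis in the third paragraph: correctly tracking into which block each $a<t_j$ is sent by $d_\mu$, and checking that the block-$i$ indices are matched up by $t_{j'}\mapsto d_\mu(t_{j'})$; everything else is a direct unwinding of the definitions together with the normalization $d_\mu(t_j)=c_{i-1}+j$.
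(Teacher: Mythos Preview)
Your argument is correct and follows essentially the same route as the paper's own proof: both split the index set $J_{wd_\mu}^{<t_j}$ according to whether $d_\mu(a)$ lands inside or outside the $i$th block, observe that the off-block contributions coincide with those of $J_{d_\mu}^{<t_j}$, and then count the in-block discrepancy using $\langle\alpha_{\nu^i},h_{\nu^i}\rangle=2$. Your version is a bit more explicit (you pin down $d_\mu(t_j)=c_{i-1}+j$ rather than just $d_\mu(t_1)<\cdots<d_\mu(t_{b_i})$), but the idea and the computation are the same.
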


\begin{proof}
By definition of $d_\mu\in\Sym(\mu,\wnu)$, $d_{\mu}(k)\in \{c_{r-1}+1,c_{r-1}+2,\cdots,c_{r}\}$ whenever $\mu_{k}=\nu^{r}$. Therefore, we have  $$\begin{aligned}
J_{wd_{\mu}}^{<t_j}&=\{1\leq s<t_{j}|wd_{\mu}(s)<wd_{\mu}(t_{j})\}\\
&=\bigl\{1\leq s<t_j\bigm|s\notin\{t_{1},t_{2},\cdots t_{j-1}\}, wd_{\mu}(s)<wd_{\mu}(t_{j})\bigr\}\\
&\qquad \cup \bigl\{t_{a}\bigm|1\leq a\leq j-1,\,\,wd_{\mu}(t_a)<w d_{\mu}(t_{j})\bigr\}\\
&=\bigl\{1\leq s<t_j\bigr|s\notin\{t_{1},t_{2},\cdots t_{j-1}\}, d_{\mu}(s)<d_{\mu}(t_{j})\bigr\}\\
&\qquad\cup \bigl\{t_{a}\bigm|1\leq a\leq j-1,\,\,w_{i} d_{\mu}(t_{a})<w_{i} d_{\mu}(t_{j})\bigr\}.
\end{aligned}
$$

Since $d_{\mu}$ is a minimal length right $\Sym_\fb$-coset representative in $\Sym_n$, we have $d_{\mu}(t_{1})<d_{\mu}(t_{2})<\cdots<d_{\mu}(t_{b_{i}})$.
It follows that $$\begin{aligned}
N^\Lam(wd_{\mu},\mu,t_{j})&=(\Lambda-\sum_{s\in J_{wd_{\mu}}^{<t_j}}\alpha_{\mu_{s}}\big )(h_{\mu_{t_{j}}})\\
&=N^\Lam(d_{\mu},\mu,t_{j})+2(j-1)-2|\tilde{J}_{w_{i}}^{<d_{\mu}(t_j)}|.
\end{aligned}
$$
This completes the proof of the lemma.
\end{proof}

\begin{thm}\label{mainthm3a} Let $\mu=(\mu_1,\cdots,\mu_n)\in I^\beta$. Then we have $$
\dim\,e(\wnu)\RR^\Lam({\beta})e(\mu)=\dim\,e(\mu)\RR^\Lam({\beta})e(\wnu)=\Bigl(\prod_{i=1}^{p}b_{i}!\Bigr)\Bigl(\prod_{t=1}^{n}N^\Lam(\mu,t)\Bigr).
$$
\end{thm}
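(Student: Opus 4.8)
The plan is to compute $\dim e(\wnu)\RR^\Lam(\beta)e(\mu)$ directly from the dimension formula in Corollary \ref{maincor1}, exploiting the coset decomposition $\Sym(\mu,\wnu)=\Sym_{\fb}d_{\mu}$ together with the independence statement in Lemma \ref{keylem2}. By Corollary \ref{maincor1} applied to the pair $(\mu,\wnu)$ (noting $N^\Lam$ is computed with respect to the first index, which here is $\mu$),
$$
\dim e(\mu)\RR^\Lam(\beta)e(\wnu)=\sum_{w\in\Sym_{\fb}}\prod_{t=1}^{n}N^\Lam(wd_\mu,\mu,t).
$$
Since every $t\in\{1,\dots,n\}$ lies in exactly one block $\{t_1<\dots<t_{b_i}\}$ according to the value $\mu_t=\nu^i$, the product over $t$ factors as a product over $i=1,\dots,p$ of inner products over $j=1,\dots,b_i$, and by Lemma \ref{keylem2} the $i$-th factor depends only on $w_i$ (the $i$-th component of $w=w_1\times\cdots\times w_p$). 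Hence the sum over $\Sym_{\fb}$ also factors, and it suffices to evaluate, for each fixed $i$,
$$
\sum_{w_i\in\Sym_{\{c_{i-1}+1,\dots,c_i\}}}\prod_{j=1}^{b_i}\Bigl(N^\Lam(d_\mu,\mu,t_j)+2(j-1)-2|\tilde J_{w_i}^{<d_\mu(t_j)}|\Bigr).
$$

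The next step is to recognize this inner sum as an instance of a computation already carried out in the proof of Theorem \ref{mainthm1b}. After reindexing via the order-preserving bijection $d_\mu$ between $\{t_1,\dots,t_{b_i}\}$ and $\{c_{i-1}+1,\dots,c_i\}$ — using that $d_\mu$ is a minimal-length right $\Sym_\fb$-coset representative, so $d_\mu(t_1)<\cdots<d_\mu(t_{b_i})$ — and applying the bijection $\theta_{b_i}$ of Lemma \ref{bij} (equivalently, summing over the values $|\tilde J_{w_i}^{<k}|\in\{0,1,\dots,k-c_{i-1}-1\}$ independently), one telescopes exactly as in the displayed chain of equalities following Lemma \ref{bij} in the proof of Theorem \ref{mainthm1b}. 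The same algebraic identity $\sum_{a=0}^{m-1}(c+2(j-1)-2a)$ collapsing a "palindromic" sum appears there; it yields $b_i!$ times $\prod_{j}\bigl(N^\Lam(d_\mu,\mu,t_j)+(j-1)\bigr)$. Finally $N^\Lam(d_\mu,\mu,t_j)+(j-1)=N^\Lam(d_\mu,\mu,t_j)+|\{s<t_j:\mu_s=\mu_{t_j}\}|=N^\Lam(\mu,t_j)$ by Definition \ref{weight}, since the elements $t_1,\dots,t_{j-1}$ are precisely the indices $s<t_j$ with $\mu_s=\nu^i=\mu_{t_j}$. Multiplying over $i$ gives $\bigl(\prod_i b_i!\bigr)\prod_{t=1}^n N^\Lam(\mu,t)$.

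For the equality $\dim e(\wnu)\RR^\Lam(\beta)e(\mu)=\dim e(\mu)\RR^\Lam(\beta)e(\wnu)$, the clean route is to invoke the anti-isomorphism ``$\ast$'' of $\RR^\Lam(\beta)$, which fixes each $e(\nu)$ and hence restricts to a $K$-linear isomorphism $e(\wnu)\RR^\Lam(\beta)e(\mu)\xrightarrow{\sim}e(\mu)\RR^\Lam(\beta)e(\wnu)$; alternatively one checks directly that $\Sym(\wnu,\mu)=d_\mu^{-1}\Sym_\fb$ feeds the formula of Corollary \ref{maincor1} for the pair $(\wnu,\mu)$ and runs the identical factor-and-telescope argument (here the relevant independence/reindexing is the ``$\check N$ vs.\ $N$'' symmetry of Lemma \ref{eqa1} together with Lemma \ref{keylem2} read for $\wnu$ in the first slot).

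**The main obstacle** I anticipate is purely bookkeeping: matching the index sets precisely so that Lemma \ref{keylem2}'s expression $N^\Lam(d_\mu,\mu,t_j)+2(j-1)-2|\tilde J_{w_i}^{<d_\mu(t_j)}|$ lines up, block by block, with the quantities $N^\Lam(d,\nu,k)-2|\tilde J_{w_i}^{<k}|+2(k-c_{i-1}-1)$ appearing in the proof of Theorem \ref{mainthm1b}, and checking that the order-preserving identification $t_j\leftrightarrow d_\mu(t_j)$ intertwines $|\tilde J_{w_i}^{<d_\mu(t_j)}|$ correctly under $\theta_{b_i}$. Once that dictionary is fixed, the telescoping identity and the final rewriting via Definition \ref{weight} are routine, so no genuinely new idea beyond what is already deployed in Section 3 is needed.
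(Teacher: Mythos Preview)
Your proposal is correct and follows essentially the same approach as the paper: the paper also invokes the anti-isomorphism $\ast$ for the first equality, then applies Corollary \ref{maincor1} with the coset decomposition $\Sym(\mu,\wnu)=\Sym_\fb d_\mu$, uses Lemma \ref{keylem2} to factor the sum over $\Sym_\fb$ into a product over $i$, and evaluates each inner sum via the bijection of Lemma \ref{bij} and the arithmetic identity $\sum_{a=0}^{j-1}(N_{ij}-2a)=j(N_{ij}-(j-1))$ to obtain $b_i!\prod_j N^\Lam(\mu,t_j)$. The dictionary you set up (in particular $d_\mu(t_j)=c_{i-1}+j$ from minimality of $d_\mu$) is exactly what makes the telescoping from Theorem \ref{mainthm1b} go through verbatim.
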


\begin{proof} Using the anti-isomorphism $\ast$, we see that $$
\dim\,e(\wnu)\RR^\Lam({\beta})e(\mu)=\dim\,e(\mu)\RR^\Lam({\beta})e(\wnu) .
$$

Note that $\Sym(\mu,\wnu)=\Sym_\fb d_\mu$. Applying Theorem \ref{mainthmA} and Lemma \ref{keylem2}, we have $$\begin{aligned}
&\quad\,\dim\,e(\mu)\RR^\Lam({\beta})e(\wnu)\\
&=\sum_{w\in\Sym_{\fb}}\prod_{t=1}^{n}N^\Lam(wd _{\mu},\mu,t)\\
&=\prod_{i=1}^{p}\sum_{u\in\Sym_{\{c_{i-1}+1,\cdots,c_i\}}}\prod_{\substack{1\leq t\leq n\\ \mu_{t}=\nu^i}}N^\Lam(ud_{\mu},\mu,t).
\end{aligned}$$

For each $1\leq i\leq p$, we denote by $1\leq t_{i1}<t_{i2}<\cdots<t_{ib_i}\leq n$ the unique $b_i$-tuple such that $\mu_{t_{ij}}=\nu^i$, $\forall\,1\leq j\leq b_i$.
For each $1\leq j\leq b_i$, we set $$
N_{ij}:=N^\Lam(d_\mu,\mu,t_{ij})+2(j-1). $$
Then, using Lemma \ref{keylem2} again, combing with the bijection in Lemma \ref{bij}, we can deduce that
$$\begin{aligned}
&\quad\,\sum_{u\in\Sym_{\{c_{i-1}+1,\cdots,c_i\}}}\prod_{\substack{1\leq t\leq n\\ \mu_t=\nu^i}} N^\Lam(ud_{\mu},\mu,t)\\
&=\prod_{k=1}^{b_i}((N_{ik}+N_{ik}-2+N_{ik}-4+\cdots+N_{ik}-2(k-1))\\
&={b_i}!\prod_{k=1}^{b_i}(N_{ik}-(k-1))=b_{i}!\prod_{\substack{1\leq t\leq n\\ \mu_{t}=\nu^{i}}}N^\Lam(\mu,t).
\end{aligned}
$$
Finally, we consider the products of the above identity over $1\leq i\leq p$. Then we can deduce that $\dim\,e(\mu)\RR^\Lam({\beta})e(\wnu)=\Bigl(\prod_{i=1}^{p}b_{i}!\Bigr)\Bigl(\prod_{t=1}^{n}N^\Lam(\mu,t)\Bigr)$.
This completes the proof of the theorem.
\end{proof}

\begin{cor}\label{maincor3} Let $\mu\in I^\beta$. Then $e(\wnu)\RR^\Lam({\beta})e(\mu)\neq 0$ if and only if for any $1\leq k\leq n$, $N^\Lam(\mu,k)>0$.
\end{cor}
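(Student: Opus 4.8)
The plan is to read the corollary off the dimension formula of Theorem~\ref{mainthm3a}, which gives
$$\dim e(\wnu)\RR^\Lam(\beta)e(\mu)=\Bigl(\prod_{i=1}^{p}b_i!\Bigr)\Bigl(\prod_{t=1}^{n}N^\Lam(\mu,t)\Bigr).$$
Since a $K$-vector space is nonzero exactly when its dimension is positive and $\prod_i b_i!>0$, the corollary amounts to the statement that $\prod_{t=1}^{n}N^\Lam(\mu,t)>0$ if and only if $N^\Lam(\mu,k)>0$ for every $1\leq k\leq n$. The implication ``$\Leftarrow$'' is immediate. For ``$\Rightarrow$'' the point is that the product is a priori a \emph{nonnegative} integer (it is $\dim e(\wnu)\RR^\Lam(\beta)e(\mu)$ divided by the positive integer $\prod_i b_i!$), so it suffices to rule out the product being positive while some factor $N^\Lam(\mu,k)$ is negative; equivalently, to show that whenever some $N^\Lam(\mu,k)\leq 0$ the product already vanishes.

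The heart of the argument is a monotonicity estimate along each constant-value chain of $\mu$. Fix $1\leq i\leq p$ and let $1\leq t_{i1}<t_{i2}<\cdots<t_{ib_i}\leq n$ be the indices with $\mu_{t_{ij}}=\nu^i$. As $d_\mu$ is a minimal length right $\Sym_\fb$-coset representative with $d_\mu^{-1}\wnu=\mu$, one has $d_\mu(t_{i1})<\cdots<d_\mu(t_{ib_i})$, hence $J_{d_\mu}^{<t_{ij}}\subseteq J_{d_\mu}^{<t_{i,j+1}}$, and the indices of $J_{d_\mu}^{<t_{ij}}$ carrying value $\nu^i$ are exactly $t_{i1},\dots,t_{i,j-1}$. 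Plugging this into Definition~\ref{weight} and (\ref{Ndef}) and using $\langle\alpha_{\nu^i},h_{\nu^i}\rangle=2$, I would obtain
$$N^\Lam(\mu,t_{ij})=\langle\Lam,h_{\nu^i}\rangle-(j-1)+A_j,\qquad A_j:=-\!\!\sum_{\substack{s\in J_{d_\mu}^{<t_{ij}}\\ \mu_s\neq\nu^i}}\langle\alpha_{\mu_s},h_{\nu^i}\rangle\ \geq\ 0,$$
where $A_j\geq0$ because $\langle\alpha_{\mu_s},h_{\nu^i}\rangle=a_{\nu^i,\mu_s}\leq 0$ for $\mu_s\neq\nu^i$, and $A_1\leq A_2\leq\cdots$ because passing from $J_{d_\mu}^{<t_{ij}}$ to $J_{d_\mu}^{<t_{i,j+1}}$ only adds further summands with $\mu_s\neq\nu^i$, hence nonnegative contributions $-a_{\nu^i,\mu_s}$. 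Consequently, using $\Lam\in P^+$,
$$N^\Lam(\mu,t_{i1})=\langle\Lam,h_{\nu^i}\rangle+A_1\ \geq\ 0,\qquad N^\Lam(\mu,t_{i,j+1})-N^\Lam(\mu,t_{ij})=-1+(A_{j+1}-A_j)\ \geq\ -1 .$$

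Granting this, a discrete intermediate value argument finishes the proof. Suppose $\prod_{t}N^\Lam(\mu,t)>0$ and, for contradiction, $N^\Lam(\mu,k)\leq 0$ for some $k$, say $k=t_{ij}$. Along the $i$-th chain the values start at $N^\Lam(\mu,t_{i1})\geq 0$, reach $N^\Lam(\mu,t_{ij})\leq 0$, and decrease by at most $1$ at each step; hence $N^\Lam(\mu,t_{ij'})=0$ for some $1\leq j'\leq j$ (take $j'$ minimal with $N^\Lam(\mu,t_{ij'})\leq 0$: if $j'=1$ the value is both $\geq0$ and $\leq0$, and if $j'>1$ then $N^\Lam(\mu,t_{i,j'-1})\geq 1$ forces $N^\Lam(\mu,t_{ij'})\geq 0$). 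Then $\prod_t N^\Lam(\mu,t)=0$, a contradiction. Therefore $N^\Lam(\mu,k)>0$ for all $k$, whence $\prod_t N^\Lam(\mu,t)>0$ and $e(\wnu)\RR^\Lam(\beta)e(\mu)\neq 0$.

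I expect the only genuine work to be the combinatorial bookkeeping of the second paragraph: verifying $d_\mu(t_{i1})<\cdots<d_\mu(t_{ib_i})$ and the inclusion $J_{d_\mu}^{<t_{ij}}\subseteq J_{d_\mu}^{<t_{i,j+1}}$ from the minimality of $d_\mu$, pinning down which indices contribute the value $2$ when expanding $N^\Lam(\mu,t_{ij})$, and extracting the two displayed inequalities. Everything after that is the two-line ``if'' direction together with the elementary monotonicity argument, and no representation-theoretic input beyond Theorem~\ref{mainthm3a} is required.
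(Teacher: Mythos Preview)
Your proof is correct and follows essentially the same approach as the paper: both use Theorem~\ref{mainthm3a} for the ``if'' direction, and for ``only if'' both establish that along each constant-value chain $t_{i1}<\cdots<t_{ib_i}$ one has $N^\Lam(\mu,t_{i1})\geq 0$ and $N^\Lam(\mu,t_{i,j-1})\leq N^\Lam(\mu,t_{ij})+1$, then combine this monotonicity with the nonvanishing of the product to rule out any nonpositive factor. Your derivation of the explicit formula $N^\Lam(\mu,t_{ij})=\langle\Lam,h_{\nu^i}\rangle-(j-1)+A_j$ with $A_j$ nondecreasing makes the argument slightly more transparent than the paper's version, but the content is the same.
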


\begin{proof} The ``if'' part of the corollary follows directly from Theorem \ref{mainthm3a}. It remains to prove the ``only if'' part of the corollary.

Suppose that $e(\wnu)\RR^\Lam({\beta})e(\mu)\neq 0$. Assume there exists some $1\leq s\leq n$ such that $N^\Lam(\mu,s)\leq 0$. First, $e(\wnu)\RR^\Lam({\beta})e(\mu)\neq 0$ implies that for any $1\leq k\leq n$, $N^\Lam(\mu,k)\neq 0$.

For each $1\leq i\leq p$ we define $$
\{t_{ia}|1\leq a\leq b_i, t_{i1}<t_{i2}<\cdots<t_{ib_i}\}:=\{1\leq k\leq n|\mu_k=\nu^i\}.
$$
By definition (because $a_{kl}\leq 0$ for any $k\neq l$)), $N_\mu^\Lam(t_{i1})>0$ for any $1\leq i\leq p$.

Suppose that $N^\Lam(\mu,t_{ij})<0$ for some $1\leq j\leq b_i$ and $1\leq i\leq p$. Assume that $i,j$ is chosen such that $t_{ij}$ is as minimal as possible. By the last paragraph, we can deduce that $j>1$. Thus $N^\Lam(\mu,t_{ia})>0$ for any $1\leq a<j$.
Note that $d_\mu(t_{i(j-1)})<d_\mu(t_{ij})$ and $\<\alpha_{\mu_{t_{i(j-1)}}},h_{\mu_{t_{ij}}}\>=2$. It follows that $$
N^\Lam(\mu,t_{i(j-1)})\leq N^\Lam(\mu,t_{ij})+1 ,
$$
which is a contradiction because $N^\Lam(\mu,t_{ij})<0<N^\Lam(\mu,t_{i(j-1)})$. This completes the proof of the ``only if'' part and hence the corollary.
\end{proof}

We want to construct an explicit homogeneous monomial bases for $e(\wnu)\RR^\Lam({\beta})e(\mu)$ and $e(\mu)\RR^\Lam({\beta})e(\wnu)$, from which
one can also derive the graded dimensions of these two subspaces.

\begin{lem}\label{psidmu} Let $\mu\in I^\beta$. Let $s_{i_1}\cdots s_{i_m}$ and $s_{j_1}\cdots s_{j_m}$ be two reduced expression of $d_\mu$. Then $$
\psi_{i_1}\cdots \psi_{i_m}e(\mu)=\psi_{j_1}\cdots \psi_{j_m}e(\mu) .
$$
In other words, $\psi_{d_\mu}e(\mu):=\psi_{i_1}\cdots \psi_{i_m}e(\mu)$ depends only on $\mu$ but not on the choices of the reduced expression of $d_\mu$.
\end{lem}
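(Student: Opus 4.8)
The plan is to deduce the statement from two facts: first, that $d_\mu$, being of minimal length in its coset $\Sym_\fb d_\mu$, never crosses two strands carrying the same residue; and second, that the only relation relevant to rewriting reduced words in the $\psi$'s that is not an exact identity is the length-three braid relation of Definition~\ref{D:QuiverRelations}, whose ``error term'' is supported on $e(\nu)$ with coefficient a multiple of $\delta_{\nu_r\nu_{r+2}}$. Concretely, writing $1\le t_{i1}<t_{i2}<\cdots<t_{ib_i}\le n$ for the positions $k$ with $\mu_k=\nu^i$, minimality of $d_\mu$ forces $d_\mu^{-1}$ to be increasing on each block $\{c_{i-1}+1,\dots,c_i\}$, whence $d_\mu(t_{ij})=c_{i-1}+j$ for all $i,j$; in particular, if $a<b$ and $\mu_a=\mu_b$ then $d_\mu(a)<d_\mu(b)$, i.e.\ the strands starting at $a$ and $b$ do not cross in any strand diagram of $d_\mu$.

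Next, by the word property of Coxeter groups (Matsumoto--Tits), any two reduced words for $d_\mu$ are linked by a chain of moves, each replacing a factor $s_rs_s$ by $s_ss_r$ (for $|r-s|>1$) or a factor $s_{r+1}s_rs_{r+1}$ by $s_rs_{r+1}s_r$, with every intermediate word reduced. A move of the first kind leaves $\psi_{i_1}\cdots\psi_{i_m}e(\mu)$ unchanged because $\psi_r\psi_s=\psi_s\psi_r$ when $|r-s|>1$. For a move of the second kind, write $d_\mu=u\cdot(s_{r+1}s_rs_{r+1})\cdot v$ as a reduced expression and let $\rho:=v\cdot\mu$ be the residue sequence reached immediately after $\psi_v$; the braid relation gives
\[
\psi_{r+1}\psi_r\psi_{r+1}e(\rho)-\psi_r\psi_{r+1}\psi_re(\rho)=\delta_{\rho_r\rho_{r+2}}\,\frac{Q_{\rho_r,\rho_{r+1}}(x_r,x_{r+1})-Q_{\rho_r,\rho_{r+1}}(x_{r+2},x_{r+1})}{x_r-x_{r+2}}\,e(\rho),
\]
so it suffices to check $\rho_r\neq\rho_{r+2}$. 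The strand occupying position $r$ (resp.\ $r+2$) just before the triple has source $a:=v^{-1}(r)$ (resp.\ $b:=v^{-1}(r+2)$) in $\mu$, so $\rho_r=\mu_a$ and $\rho_{r+2}=\mu_b$; the permutation $(r,r+2)$ of the triple interchanges these two strands, and in the reduced word $s_{r+1}s_rs_{r+1}$ the strands entering at $r$ and $r+2$ cross exactly once, while reducedness of $d_\mu$ forbids any pair of strands from crossing more than once in the whole diagram. Hence the strands $a$ and $b$ cross exactly once in $d_\mu$, so by the first paragraph $\mu_a\neq\mu_b$, i.e.\ $\rho_r\neq\rho_{r+2}$, and the move introduces no error term.

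Putting these together, $\psi_{i_1}\cdots\psi_{i_m}e(\mu)$ is unchanged along the entire chain of moves, which proves the lemma and makes the notation $\psi_{d_\mu}e(\mu)$ unambiguous. The main work lies in the strand bookkeeping of the second paragraph---pinning down, with the left/right conventions of Section~2, exactly which two strands the triple interchanges and why reducedness forces them to cross in $d_\mu$---together with the elementary minimal-coset computation $d_\mu(t_{ij})=c_{i-1}+j$ of the first paragraph; I expect no genuine obstacle beyond being careful with these conventions.
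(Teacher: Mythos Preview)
Your argument is correct. Both your proof and the paper's rely on the minimality of $d_\mu$ in $\Sym(\mu,\wnu)$, but they deploy it differently. The paper appeals to the general straightening property (citing \cite[Theorem~4.10]{BKW}) that the difference of two reduced $\psi$-words for $d_\mu$ is a $K$-linear combination of elements $e(\wnu)\psi_{p_1}\cdots\psi_{p_t}x_1^{d_1}\cdots x_n^{d_n}e(\mu)$ with $t<m$; since any $w\in\Sym(\mu,\wnu)$ has length $\ge m$, the idempotent orthogonality $e(\wnu)e(s_{p_1}\cdots s_{p_t}\mu)=0$ kills every such term. You instead use Matsumoto's theorem and check each braid move individually: for a length-three move at positions $r,r{+}1,r{+}2$ acting on $\rho=v\mu$, you identify $\rho_r=\mu_a$, $\rho_{r+2}=\mu_b$ with $(a,b)$ an inversion of $d_\mu$, and then minimality (your computation $d_\mu(t_{ij})=c_{i-1}+j$) forces $\mu_a\neq\mu_b$, so the $\delta_{\rho_r\rho_{r+2}}$ factor already vanishes. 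Your route is more hands-on and entirely self-contained (no external reference needed), at the cost of the strand bookkeeping you flag; the paper's route is shorter and more uniform but leans on a cited structural result. Either way, the content is the same: $d_\mu$ never inverts two equal-residue positions.
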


\begin{proof} Applying the defining relation of $\RR^\Lam(\beta)$ or \cite[Theorem 4.10]{BKW}, we see that $\psi_{i_1}\cdots \psi_{i_m}e(\mu)-\psi_{j_1}\cdots \psi_{j_m}e(\mu)$ is either equal zero or equal to a $K$-linear combination of some elements of the form $$
e(\wnu)\psi_{p_1}\cdots \psi_{p_t}x_1^{d_1}\cdots x_n^{d_n}e(\mu),
$$
where $t<m$, $d_1,\cdots d_n\in\N$. However, $d_\mu$ is a minimal length right $\Sym_\fb$-coset representative in $\Sym_n$ such that $d_\mu\mu=\wnu$, which is
a minimal length element in $\Sym_n$ such that $d_\mu\mu=\wnu$. It follows that the second case can not happen. In other words, $\psi_{i_1}\cdots \psi_{i_m}e(\mu)=\psi_{j_1}\cdots \psi_{j_m}e(\mu)$.
%
\end{proof}

\begin{lem}\label{keylem3} Let $\mu\in I^\beta$. Suppose that $1\leq k\leq n$ with $N^\Lam(\mu,k)>0$. Then there exists a monic polynomial $p_{k}$ in $x_{k}$ of degree $N^\Lam(\mu,k)$ with coefficients in $K[x_{1},x_{2},\cdots,x_{k-1}]$. Moreover, $\psi_{d_\mu} p_{k}e(\mu)$ is a zero element in $e(\wnu)\RR^\Lam({\beta})e(\mu)$.
\end{lem}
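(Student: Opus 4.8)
The plan is to bootstrap from the special case $\mu=\wnu$ that was already handled in Proposition \ref{Prop 2} and Lemma \ref{Lemma 9}, transporting the relations across the ``change of idempotent'' by multiplying with $\psi_{d_\mu}$ and using divided difference operators. First I would recall from Lemma \ref{Lemma 9} and Proposition \ref{Prop 2} that for each $i$ and each $k$ with $c_{i-1}<k\leq c_i$ there is a monic polynomial $q_k$ in $x_k$ of degree $N^\Lam(\wnu,k)=N_i^\Lam(\wnu)-(k-c_{i-1}-1)$, with coefficients in $K[x_1,\dots,x_{k-1}]$, such that $q_k e(\wnu)=0$ in $e(\wnu)\RR^\Lam(\beta)e(\wnu)$. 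The idea is that the sought $p_k$ for a general $\mu$ should be obtained by ``pulling'' one of these relations back through $d_\mu$: since $d_\mu\mu=\wnu$ and $d_\mu$ is a minimal-length right $\Sym_\fb$-coset representative, for $k$ with $\mu_k=\nu^i$ we have $d_\mu(k)\in\{c_{i-1}+1,\dots,c_i\}$, and $N^\Lam(\mu,k)$ as defined in \eqref{wt} is designed precisely to record the right degree (compare Example \ref{toyexamp} and Lemma \ref{keylem2} with $w=1$).

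The key steps, in order: (1) Fix $k$ with $\mu_k=\nu^i$ and let $t_1<\dots<t_{b_i}$ enumerate $\{s\mid\mu_s=\nu^i\}$, say $k=t_j$; by Lemma \ref{keylem2} (with $w=1$) we have $N^\Lam(\mu,k)=N^\Lam(d_\mu,\mu,k)+(j-1)$ and $d_\mu(t_1)<\dots<d_\mu(t_{b_i})$. (2) Start from the cyclotomic relation $a^\Lam_{\nu^i}(x_1)e(\widehat\mu)=0$ analogous to the one used in Lemma \ref{Lemma 9}, where $\widehat\mu$ is the rearrangement of $\mu$ bringing the first occurrence of $\nu^i$ to position $1$; conjugating by an appropriate product of $\psi$'s (using $\nu^i\neq\nu^t$ for $t<i$ exactly as in Lemma \ref{Lemma 9}, so that the $\psi^2$ relations only contribute invertible $Q$-polynomial factors) produces a monic polynomial $p_{t_1}$ in $x_{t_1}$ of degree $N^\Lam(\mu,t_1)=N^\Lam(d_\mu,\mu,t_1)$ with coefficients in $K[x_1,\dots,x_{t_1-1}]$ and such that $\psi_{d_\mu}p_{t_1}e(\mu)=0$ in $e(\wnu)\RR^\Lam(\beta)e(\mu)$. (3) Induct on $j$: given $p_{t_{j-1}}$, apply a divided difference $\partial$-operator (in the spirit of Lemma \ref{Lemma 8} / \cite[(3.6), Lemma 4.2]{KK}, applied with $M=\RR^\Lam(\beta)e(\mu)$ viewed via the bimodule structure, or after first sliding to positions where consecutive residues agree) to pass from $x_{t_{j-1}}$ to $x_{t_j}$; each application drops the degree by exactly $1$ and, combined with the $+2(j-1)$ shift coming from Lemma \ref{keylem2}, lands on degree $N^\Lam(d_\mu,\mu,t_j)+(j-1)=N^\Lam(\mu,t_j)$. (4) Finally, reindex over all $i$ to cover every $k$ with $N^\Lam(\mu,k)>0$, noting that the positivity hypothesis guarantees the degree is a genuine nonnegative integer so the monic polynomial is nonconstant and the construction does not degenerate.

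The main obstacle I expect is step (3): making the divided-difference descent work cleanly when the relevant positions $t_{j-1},t_j$ are not adjacent and the residues $\mu_s$ in between are arbitrary. In the $\mu=\wnu$ case of Proposition \ref{Prop 2} the positions were consecutive with equal residues, so $\partial_{c_{i-1}+1},\partial_{c_{i-1}+2},\dots$ could be applied directly; here one must first transport the relation $\psi_{d_\mu}p_{t_{j-1}}e(\mu)=0$ along a reduced word into a configuration where an honest $\partial_a$ is available (i.e. where the two strands carrying residue $\nu^i$ become neighbours), perform the divided difference there, and transport back — all while tracking that the coefficients stay in $K[x_1,\dots,x_{t_j-1}]$ and that no lower-degree ``error terms'' in the $\psi$-straightening (which are killed because $d_\mu$ has minimal length, cf.\ Lemma \ref{psidmu}) spoil monicity in $x_{t_j}$. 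Once the bookkeeping of these conjugations is set up, the degree count is forced by Lemma \ref{keylem2}, and the vanishing is inherited from the cyclotomic relation at the outset.
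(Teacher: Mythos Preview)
Your overall strategy---start from the cyclotomic relation, conjugate by $\psi$'s, then use divided differences---is the right one, and it is exactly what the paper does. But there is a genuine gap in your step~(2), and it is more serious than the obstacle you flag in step~(3).

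The gap is in the degree count. Suppose $\mu_k=\nu^i$ and $k=t_1$ is the \emph{first} occurrence of $\nu^i$ in $\mu$. If you bring position $t_1$ to position~$1$ and conjugate by $\psi_{t_1-1}\cdots\psi_1(\cdot)\psi_1\cdots\psi_{t_1-1}$, each crossing contributes a factor $Q_{\mu_j,\nu^i}(x_j,x_{t_1})$, so the resulting polynomial in $x_{t_1}$ has degree
\[
\langle\Lambda,h_{\nu^i}\rangle \;-\;\sum_{j<t_1} a_{\nu^i,\mu_j}.
\]
But $N^\Lam(\mu,t_1)=N^\Lam(d_\mu,\mu,t_1)=\langle\Lambda,h_{\nu^i}\rangle-\sum_{j\in J_{d_\mu}^{<t_1}}a_{\nu^i,\mu_j}$, and $J_{d_\mu}^{<t_1}$ consists only of those $j<t_1$ with $\mu_j=\nu^t$ for some $t<i$. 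Any position $j<t_1$ with $\mu_j=\nu^t$ and $t>i$ contributes $-a_{\nu^i,\nu^t}\ge 0$ to your degree but not to $N^\Lam(\mu,t_1)$. So your polynomial is monic of degree \emph{strictly larger} than $N^\Lam(\mu,t_1)$ whenever such ``higher-colour'' positions exist, and the spanning argument for Theorem~\ref{mainthm3b} then fails. The phrase ``using $\nu^i\neq\nu^t$ for $t<i$ exactly as in Lemma~\ref{Lemma 9}'' is precisely where the analogy breaks: in $\wnu$ every position before $c_{i-1}+1$ has colour $\nu^t$ with $t<i$, but in a general $\mu$ this is false.

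The paper fixes this (and simultaneously resolves your step~(3) obstacle) by \emph{not} inducting on $j$. For a fixed $k$ with $\mu_k=\nu^i$, it first applies a permutation $u_1$ moving all positions $m<k$ with $\mu_m=\nu^t$, $t>i$, to the \emph{right} of $k$; then a permutation $u_2$ gathers the earlier $\nu^i$-positions into a contiguous block immediately to the left. In the rearranged sequence $\mu^{[2]}$, the positions to the left of the target now have colours $\nu^t$ with $t<i$ only, so the Lemma~\ref{Lemma 9} computation gives the correct degree $N^\Lam(\mu,k)+r$ (with $r$ the number of earlier $\nu^i$'s), and $r$ consecutive $\partial$-operators bring it down to $N^\Lam(\mu,k)$. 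Pulling back by $u_1^{-1}u_2^{-1}$ yields $p_k$, and the relation $p_k^{[2]}e(\mu^{[2]})=0$ becomes $\psi_{u_2}\psi_{u_1}p_k e(\mu)=0$; since $d_\mu=u_3u_2u_1$ with lengths adding, multiplying by $\psi_{u_3}$ gives $\psi_{d_\mu}p_k e(\mu)=0$. The moral is that the weaker vanishing (only after left-multiplying by $\psi_{d_\mu}$) is exactly the price paid for discarding the unwanted $Q$-factors from the higher-colour strands.
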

\begin{proof}
Suppose $\mu_k=\nu^i$, where $\nu^i\in I$. In particular, $c_{i-1}<d_\mu(k)\leq c_i$. Recall the definitions of $\wnu$ and $\{c_j|1\leq j\leq p\}$ at the beginning of this section. We define $\mathcal{J}_i:=\{1\leq m<k|d_\mu(m)>c_{i}\}$ and write $$
\mathcal{J}_i=\{m_j|1\leq j\leq g, 1\leq m_{1}<m_{2}<\cdots<m_{g}<k\}.
$$
Then $\mathcal{J}_i=\{1\leq m\leq k|\mu_m=\nu^t, i<t\leq p\}$.

We consider the following products of cycles: $$\begin{aligned}
u_1:&=(k-g+1,k-g,\cdots,m_1+1,m_1)(k-g+2,k-g+1,\cdots,m_2+1,m_2)\cdots \\
&\qquad\qquad (k,k-1,\cdots,m_g+1,m_g) .\end{aligned}
$$
Clearly we have $$
u_1:=(s_{k-g}\cdots s_{m_{1}+1}s_{m_{1}})(s_{k-g+1}\cdots s_{m_2+1}s_{m_2})\cdots (s_{k-1}\cdots s_{m_g+1}s_{m_g}),
$$
and this is a reduced expression of $u_1$. We set $\mu^{[1]}:=u_1\mu$. In other words, $\mu^{[1]}$ is obtained from $\mu$ by moving its $m_1$-th, $\cdots$ $m_g$-th components to the $(k-g+1)$-th, $\cdots$, $k$-th positions respectively, and unchanging the relative positions of all the remaining components of $\mu$.
In particular, we have $\mu^{[1]}_{k-g}=\nu^i$ and there is no $t<k-g$ such that $\mu^{[1]}_{t}=\nu^j$ with $j>i$.

Now we define $\mathcal{J}'_i:=\{1\leq l<k-g|\mu_l^{[1]}=\nu^{i}\}$ and write $$
\mathcal{J}'_i=\{l_i|1\leq i\leq r, 1\leq l_{1}<l_{2}<\cdots<l_{r}<k-g\}.
$$
Let $\mu^{[2]}$ be the $n$-tuple obtained from $\mu^{[1]}$ by moving its $l_1$-th, $\cdots$ $l_r$-th components to the $(k-g-r)$-th, $\cdots$, $(k-g-1)$-th positions respectively, and unchanging the relative positions of all the remaining components of $\mu^{[1]}$. In fact, we can choose $u_2$ to be the unique minimal element satisfying $\mu^{[2]}=u_2\mu^{[1]}$. In particular, for any $a<k-g-r$ we have $\mu^{[2]}_a=\nu^j$ with $j<i$; while for any $k-g-r\leq b\leq k-g$ we have $\mu^{[2]}_a=\nu^i$.

Let $\widehat{\mu}$ be the $n$-tuple obtained from $\mu^{[2]}$ by moving the $(k-g-r)$-th component $\mu^{[2]}_{k-g-r}$ (which is equal to $\nu^{i}$ by construction) of $\mu^{[2]}$  to the first position and unchanging the relative positions of all the other components. We consider $$\psi_{k-g-r}\psi_{k-g-r-1}\cdots\psi_2\psi_{1}a^{\Lambda}_{\nu^{i}}(x_{1})e(\widehat{\mu})\psi_{1}\psi_{2}\cdots\psi_{k-g-r-1}\psi_{k-g-r}.
$$
The same argument as in the proof of Lemma \ref{Lemma 9} shows that this equals to $\widehat{p}_k e(\mu^{[2]})$, where $\widehat{p}_k$ is a polynomial in $x_{k-g-r}$ of degree $N^\Lam(\mu,k)+r$ with leading coefficient in $K^\times$ and other coefficients in $K[x_{1},x_{2},\cdots,x_{k-g-r-1}]$. Clearly, this is zero in $\RR^\Lam({\beta})e(\mu^{[2]})$.

Using Lemma \ref{Lemma 8} we can deduce that there is a monic polynomial $p_k^{[2]}$ in $x_{k-g}$ of degree $N^\Lam(\mu,k)$ with coefficients in $K[x_{1},x_{2},\cdots,x_{k-g-1}]$, and satisfies that $p_k^{[2]}e(\mu^{[2]})$ is zero $\RR^\Lam({\beta})e(\mu^{[2]})$. Now we define $p_{k}=u_1^{-1}u_2^{-1}(p_k^{[2]})$, then $p_{k}$ is a monic polynomial in $x_{k}$ of degree $N^\Lam(\mu,k)$ with coefficients in $K[x_{1},x_{2},\cdots,x_{k-1}]$ and $$\begin{aligned}\label{poly}
\psi_{u_2}\psi_{u_1}p_k e(\mu)=p_k^{[2]}\psi_{u_2}\psi_{u_1}e(\mu)=p_k^{[2]}e(\mu^{[2]})\psi_{u_2}\psi_{u_1}=0.
\end{aligned}
$$
Finally, by construction we can find $u_3\in\Sym_n$ such that $d_\mu=u_3u_2u_1$, and $\ell(d_\mu)=\ell(u_3)+\ell(u_2)+\ell(u_1)$. Hence by Lemma \ref{psidmu}, $\psi_{d_\mu} p_{k}e(\mu)=\psi_{u_3}\psi_{u_2}\psi_{u_1}p_k e(\mu)=0$.
\end{proof}

Henceforth, for each $w\in\Sym_\fb$, we fix a reduced expression $s_{j_1}\cdots s_{j_a}$ of $w$ and define \begin{equation}\label{psiwd1}
\psi^{\bf 1}_{wd_\mu}:=\psi_{j_1}\cdots\psi_{j_a}\psi_{d_\mu} .
\end{equation}
Note that every element in $\Sym(\mu,\wnu)$ is of the form $wd_\mu$ for some $w\in\Sym_{\fb}$.

\begin{thm}\label{mainthm3b}
Suppose that $N^\Lam(\mu,k)>0$ for any $1\leq k\leq n$. Then the elements in the following set $$
\Bigl\{\psi^{\bf 1}_{w}\prod_{k=1}^{n}x_{k}^{r_{k}}e(\mu)\Bigm| w \in\Sym(\mu,\wnu), 0\leq r_{k}<N^\Lam(\mu,k), \forall\,1\leq k\leq n\Bigr\}$$
form a $K$-basis of $e(\wnu)\RR^\Lam({\beta})e(\mu)$.
\end{thm}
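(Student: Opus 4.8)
The plan is to run the same two-step argument as in the proof of Theorem~\ref{mainthm2b}: first show the displayed set spans $e(\wnu)\RR^\Lam(\beta)e(\mu)$, then conclude it is a $K$-basis by comparing its cardinality with $\dim e(\wnu)\RR^\Lam(\beta)e(\mu)$, which is computed in Theorem~\ref{mainthm3a}. For the spanning step, start from the standard monomial (Khovanov--Lauda) basis of the non-cyclotomic algebra $\RR(\beta)$: after fixing for each $v\in\Sym_n$ a reduced word, the elements $\psi_v\prod_{i=1}^n x_i^{a_i}e(\nu)$ span $\RR(\beta)$ (this already follows from the defining relations, moving the $x$'s to the right of the $\psi$'s and reducing $\psi$-words), hence their images span $\RR^\Lam(\beta)$. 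I would choose these reduced words so that for $v=wd_\mu$ with $w\in\Sym_\fb$ the word of $v$ is the concatenation of the chosen word of $w$ with a reduced word of $d_\mu$ — legitimate since $d_\mu$ is a minimal length right $\Sym_\fb$-coset representative, so $\ell(wd_\mu)=\ell(w)+\ell(d_\mu)$ — which makes $\psi_v$ agree with $\psi^{\bf 1}_{wd_\mu}=\psi_w\psi_{d_\mu}$ of (\ref{psiwd1}). Multiplying the spanning set of $\RR^\Lam(\beta)$ on the left by $e(\wnu)$ and on the right by $e(\mu)$, and using $e(\nu)e(\mu)=\delta_{\nu\mu}e(\mu)$ and $\psi_v e(\mu)=e(v\mu)\psi_v$, only the terms with $\nu=\mu$ and $v\mu=\wnu$, i.e.\ $v\in\Sym(\mu,\wnu)=\Sym_\fb d_\mu$, survive; so $e(\wnu)\RR^\Lam(\beta)e(\mu)$ is spanned over $K$ by $\{\psi^{\bf 1}_{wd_\mu}\prod_{k=1}^n x_k^{a_k}e(\mu)\mid w\in\Sym_\fb,\ a_k\in\N\}$.

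Next I would cut down the exponents $a_k$ using Lemma~\ref{keylem3}. Since $N^\Lam(\mu,k)>0$ for all $k$ by hypothesis, that lemma provides $p_k=x_k^{N^\Lam(\mu,k)}+(\text{lower powers of }x_k\text{ with coefficients in }K[x_1,\dots,x_{k-1}])$ with $\psi_{d_\mu}p_k e(\mu)=0$. As all $x_i$ commute with each other and with $e(\mu)$, for any polynomial $g$ we get $\psi^{\bf 1}_{wd_\mu}\,g\,p_k\,e(\mu)=\psi_w\,\psi_{d_\mu}\,p_k\,e(\mu)\,g=0$. Now, given a spanning monomial $\psi^{\bf 1}_{wd_\mu}\prod_i x_i^{a_i}e(\mu)$ with $a_k\ge N^\Lam(\mu,k)$ for some $k$, write $\prod_i x_i^{a_i}=g\,p_k-g\bigl(p_k-x_k^{N^\Lam(\mu,k)}\bigr)$ with $g=\bigl(\prod_{i\ne k}x_i^{a_i}\bigr)x_k^{a_k-N^\Lam(\mu,k)}$; the $g\,p_k$ part dies, and the remaining part is a $K$-combination of monomials $\psi^{\bf 1}_{wd_\mu}\prod_i x_i^{b_i}e(\mu)$ with $b_i=a_i$ for $i>k$ and $b_k<a_k$ (the exponents $b_1,\dots,b_{k-1}$ may grow, which is harmless). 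Ordering exponent vectors $(a_n,a_{n-1},\dots,a_1)$ lexicographically with the $x_n$-exponent most significant — an order that is well-founded on $\N^n$ — each such step strictly decreases the vector while keeping $w$ unchanged, so iterating it rewrites every spanning monomial as a $K$-combination of the elements of the displayed set. Hence that set spans $e(\wnu)\RR^\Lam(\beta)e(\mu)$.

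To finish, the displayed set has at most $|\Sym(\mu,\wnu)|\prod_{k=1}^n N^\Lam(\mu,k)=\bigl(\prod_{i=1}^p b_i!\bigr)\prod_{k=1}^n N^\Lam(\mu,k)$ elements, which by Theorem~\ref{mainthm3a} equals $\dim e(\wnu)\RR^\Lam(\beta)e(\mu)$; a spanning set of cardinality at most the dimension must be a basis, which proves the theorem (and shows in passing that the listed elements are pairwise distinct and $K$-linearly independent). The main obstacle is the bookkeeping in the reduction step — confirming that the relation coming from $p_k$ leaves all exponents of $x_j$ with $j>k$ untouched, so that one well-chosen well-founded monomial order controls the whole procedure — while the rest is just the already-established Lemma~\ref{keylem3} together with the dimension formula of Theorem~\ref{mainthm3a}. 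An entirely parallel argument, or the anti-isomorphism $\ast$ exactly as in Theorem~\ref{mainthm3a}, then yields the corresponding monomial basis of $e(\mu)\RR^\Lam(\beta)e(\wnu)$.
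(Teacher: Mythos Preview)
Your argument is correct and is exactly the route the paper takes: Lemma~\ref{keylem3} supplies, for each $k$, a monic relation $\psi_{d_\mu}p_k e(\mu)=0$ with $p_k\in x_k^{N^\Lam(\mu,k)}+K[x_1,\dots,x_{k-1}][x_k]_{<N^\Lam(\mu,k)}$, which lets you reduce all $x$-exponents into the stated ranges, and then the cardinality matches $\dim e(\wnu)\RR^\Lam(\beta)e(\mu)$ by Theorem~\ref{mainthm3a}. The paper's proof is the single line ``This follows from Theorem~\ref{mainthm3a} and Lemma~\ref{keylem3}''; your write-up simply makes explicit the spanning step (via the Khovanov--Lauda monomial spanning set of $\RR(\beta)$ and a well-founded lex order on exponent vectors) that the paper leaves to the reader.
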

\begin{proof}
This follows from Theorem \ref{mainthm3a} and Lemma \ref{keylem3}.
\end{proof}

\medskip
\noindent
{\bf Proof of Theorem \ref{mainthmD}}: For each $j>0$, we define $$M_j=\text{$K$-Span}\Bigl\{\psi^{\bf 1}_{w}\prod_{k=1}^{n}x_{k}^{r_{k}}e(\mu)\Bigm| w \in\Sym(\mu,\wnu),\, \ell(w)<j,\,0\leq r_{k}<N^\Lam(\mu,k), \forall\,1\leq k\leq n\Bigr\}.$$ We claim that for any $w \in\Sym(\mu,\wnu)$, any reduced expression $w=s_{i_1}\cdots s_{i_t}$ of $w$ and any non-negative integers $\{r_k\geq 0|1\leq k\leq n\}$, \begin{equation}\label{unitriangular}
\psi_{i_1}\cdots\psi_{i_t}\prod_{k=1}^{n}x_{k}^{r_{k}}e(\mu)-\psi^{\bf 1}_{w}\prod_{k=1}^{n}x_{k}^{r_{k}}e(\mu)\in M_{\ell(w)}.
\end{equation}
We prove this by induction on $\ell(w)$. When $w=d_\mu$ this follows from Lemma \ref{psidmu}. As in Lemma \ref{psidmu}, we can write $\psi_{i_1}\cdots\psi_{i_t}\prod_{k=1}^{n}x_{k}^{r_{k}}e(\mu)-\psi^{\bf 1}_{w}\prod_{k=1}^{n}x_{k}^{r_{k}}e(\mu)$ as a $K$-linear combination of some elements of the form $$
\psi_{p_1}\cdots \psi_{p_s}x_1^{d_1}\cdots x_n^{d_n}e(\mu),
$$
where $s<\ell(w)$, $d_1,\cdots d_n\in\N$ and $s_{p_1}\cdots s_{p_s}$ is a reduced expression of $u:=s_{p_1}\cdots s_{p_s}$. Then by induction hypothesis, we have $$\psi_{p_1}\cdots \psi_{p_s}x_1^{d_1}\cdots x_n^{d_n}e(\mu)\in \psi^{\bf 1}_u x_1^{d_1}\cdots x_n^{d_n}e(\mu)+M_{\ell(u)}.
$$ Now applying Lemma \ref{keylem3}, we can see $\psi^{\bf 1}_u x_1^{d_1}\cdots x_n^{d_n}e(\mu)\in M_{\ell(u)+1}\subseteq M_{\ell(w)}$. Moreover, $M_{\ell(u)}\subset M_{\ell(w)}$. Hence our claim follows. Since the transition matrix between the elements given in Theorem \ref{mainthm3b} and the elements given in Theorem \ref{mainthmD} is unitriangular, Theorem \ref{mainthmD} follows from Theorem \ref{mainthm3b} immediately.
\qed
\medskip

Using the anti-isomorphism $\ast$ of $\RR^\Lam(\beta)$, one can also get a $K$-basis for the subspace $e(\mu)\RR^\Lam({\beta})e(\wnu)$. Next we want to compare two different such kind of spaces.

\begin{lem}\label{Lemma 12} Let $\mu\in I^n$ and $1\leq k<n$. If $d _{\mu}>d _{\mu}s_k$, then $d _{\mu s_{k}}=d _{\mu}s_{k}$. In general, if
$d _{\mu}=d_{1}d_{2}$, with $\ell(d _{\mu})=\ell(d_{1})+\ell(d_{2})$, then $d _{\mu d_{2}^{-1}}=d_{1}$.
\end{lem}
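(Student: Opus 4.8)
The plan is to establish the first assertion directly from the characterization of minimal-length right coset representatives, and then to deduce the general statement by a short induction on $\ell(d_2)$.

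Recall that $d\in\Sym_n$ is the unique minimal-length element of the right coset $\Sym_\fb d$ if and only if $\ell(s_jd)=\ell(d)+1$ for every Coxeter generator $s_j$ of $\Sym_\fb$. Since $d_\mu$ is precisely this minimal-length element of $\Sym_\fb d_\mu=\Sym(\mu,\wnu)$, we have $\ell(s_jd_\mu)=\ell(d_\mu)+1$ for all such $s_j$. Now assume $d_\mu>d_\mu s_k$, i.e. $\ell(d_\mu s_k)=\ell(d_\mu)-1$. The first step is to check that $d_\mu s_k\in\Sym(\mu s_k,\wnu)$: unwinding the place-permutation conventions ($w\tau=\tau w^{-1}$ and $(\tau w)w'=\tau(ww')$) one computes $(d_\mu s_k)(\mu s_k)=(\mu s_k)(s_kd_\mu^{-1})=\mu d_\mu^{-1}=d_\mu\mu=\wnu$, using $d_\mu^{-1}\wnu=\mu$. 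The second step is to show that $d_\mu s_k$ is itself the minimal-length element of the right coset $\Sym_\fb(d_\mu s_k)=\Sym(\mu s_k,\wnu)$, i.e. that $\ell(s_j(d_\mu s_k))=\ell(d_\mu s_k)+1$ for every generator $s_j$ of $\Sym_\fb$. This follows by contradiction: were $\ell(s_jd_\mu s_k)=\ell(d_\mu s_k)-1=\ell(d_\mu)-2$, then $\ell(s_jd_\mu)\le\ell(s_jd_\mu s_k)+1=\ell(d_\mu)-1$, contradicting $\ell(s_jd_\mu)=\ell(d_\mu)+1$. Hence $d_\mu s_k$ is minimal in its coset, so $d_{\mu s_k}=d_\mu s_k$, which is the first assertion.

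For the general statement I would induct on $\ell(d_2)$, the case $d_2=1$ being trivial. If $\ell(d_2)>0$, choose a right descent $s_k$ of $d_2$ and write $d_2=d_2''s_k$ with $\ell(d_2'')=\ell(d_2)-1$, so $d_\mu=d_1d_2''s_k$. Comparing lengths, $\ell(d_1)+\ell(d_2'')+1=\ell(d_\mu)\le\ell(d_1d_2'')+1\le\ell(d_1)+\ell(d_2'')+1$ forces $\ell(d_1d_2'')=\ell(d_1)+\ell(d_2'')$ and $\ell(d_\mu s_k)=\ell(d_1d_2'')=\ell(d_\mu)-1$, hence $d_\mu>d_\mu s_k$. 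By the first assertion $d_{\mu s_k}=d_\mu s_k=d_1d_2''$, with the lengths still adding; since $\ell(d_2'')<\ell(d_2)$, the induction hypothesis applied to the pair $(d_1,d_2'')$ and the tuple $\mu s_k$ gives $d_{(\mu s_k)(d_2'')^{-1}}=d_1$. Finally $(\mu s_k)(d_2'')^{-1}=\mu(d_2''s_k)^{-1}=\mu d_2^{-1}$, so $d_{\mu d_2^{-1}}=d_1$, completing the induction.

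I do not expect a genuine obstacle here; the argument uses only elementary Coxeter-group facts about lengths and minimal coset representatives. The one place that demands care is the bookkeeping between the left place-permutation action (used to define $\Sym(\nu,\nu')$) and the right place-permutation action (used to form $\mu s_k$), since $d_\mu$ enters through the relation $d_\mu^{-1}\wnu=\mu$; getting this translation right is what makes the verification $d_\mu s_k\in\Sym(\mu s_k,\wnu)$ go through cleanly.
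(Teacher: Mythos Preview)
Your proof is correct. The paper itself does not give a direct argument here: it simply invokes \cite[Lemma~1.4(ii)]{DJ1} (Dipper--James) for this standard fact about minimal-length coset representatives. Your approach is therefore more self-contained, proving the statement from the Coxeter-theoretic characterization of distinguished coset representatives rather than by citation; the content is the same, and your induction on $\ell(d_2)$ to pass from the one-step case to the general factorization is the natural way to unpack what the Dipper--James lemma encodes.
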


\begin{proof} This follows from \cite[Lemma 1.4(ii)]{DJ1}.
\end{proof}

\begin{lem}\label{Lemma 13} Let $1\leq a<n$. Suppose that $d_{\mu}>d_{\mu}s_a$ (and hence $d_\mu \(a)>d_\mu(a+1)$), then
\[N^\Lam(\mu,k)=
\begin{cases}
N^\Lam(\mu s_{a},k),& \text{if $k\neq a,\,a+1$};\\
N^\Lam(\mu s_{a}, k+1)+\<\alpha_{\mu_{a+1}},h_{\mu_{a}}\>, & \text{if $k=a$}; \\
N^\Lam(\mu s_{a}, k-1), & \text{if $k=a+1$}.\\
\end{cases}\]
\end{lem}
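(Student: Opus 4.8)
The plan is to deduce all three formulas directly from Lemma~\ref{Lemma 12} together with a bookkeeping comparison of the quantities appearing in Definition~\ref{keydfn1} and Definition~\ref{weight}; no further conceptual input is needed. First I would unwind the hypothesis. By Lemma~\ref{Lemma 12} we have $d_{\mu s_a}=d_\mu s_a$, so in particular $d_\mu s_a\ne d_\mu$; since $\mu s_a=\mu$ would force $d_{\mu s_a}=d_\mu$, this already shows $\mu_a\ne\mu_{a+1}$. Put $d':=d_{\mu s_a}=d_\mu s_a$. Then $d'(j)=d_\mu(j)$ for $j\ne a,a+1$, $d'(a)=d_\mu(a+1)$, $d'(a+1)=d_\mu(a)$, and $d_\mu(a)>d_\mu(a+1)$ as recorded in the statement; likewise $(\mu s_a)_j=\mu_j$ for $j\ne a,a+1$ while $(\mu s_a)_a=\mu_{a+1}$ and $(\mu s_a)_{a+1}=\mu_a$. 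Everything then comes down to comparing $J_{d_\mu}^{<k}$ with $J_{d'}^{<k'}$ and the ``defect counts'' $|\{j<k:\mu_j=\mu_k\}|$ with $|\{j<k':(\mu s_a)_j=(\mu s_a)_{k'}\}|$ for the relevant pairs $(k,k')$.

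For $k\notin\{a,a+1\}$ I would prove $\sum_{j\in J_{d_\mu}^{<k}}\alpha_{\mu_j}=\sum_{j\in J_{d'}^{<k}}\alpha_{(\mu s_a)_j}$. The indices $j\ne a,a+1$ contribute identically on the two sides. Since $d_\mu(a)>d_\mu(a+1)$, the set $J_{d_\mu}^{<k}\cap\{a,a+1\}$ is one of $\{a,a+1\}$, $\{a+1\}$, $\emptyset$, and correspondingly (since $d'(a)<d'(a+1)$) $J_{d'}^{<k}\cap\{a,a+1\}$ is $\{a,a+1\}$, $\{a\}$, $\emptyset$; in each case, because the labels are swapped ($(\mu s_a)_a=\mu_{a+1}$, $(\mu s_a)_{a+1}=\mu_a$), the contributed root is the same. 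As $\mu_k=(\mu s_a)_k$ this gives $N^\Lam(d_\mu,\mu,k)=N^\Lam(d',\mu s_a,k)$, and the defect count is unchanged because permuting two entries below $k$ does not alter the multiset $\{\mu_1,\dots,\mu_{k-1}\}$. Hence $N^\Lam(\mu,k)=N^\Lam(\mu s_a,k)$.

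For $k=a+1$ I would note that $a\notin J_{d_\mu}^{<a+1}$ precisely because $d_\mu(a)>d_\mu(a+1)$, whence $J_{d_\mu}^{<a+1}=\{j<a:d_\mu(j)<d_\mu(a+1)\}=\{j<a:d'(j)<d'(a)\}=J_{d'}^{<a}$; as the labels agree for $j<a$ and $(\mu s_a)_a=\mu_{a+1}$, this yields $N^\Lam(d',\mu s_a,a)=N^\Lam(d_\mu,\mu,a+1)$. The defect counts satisfy $|\{j<a:(\mu s_a)_j=(\mu s_a)_a\}|=|\{j<a:\mu_j=\mu_{a+1}\}|=|\{j<a+1:\mu_j=\mu_{a+1}\}|$, the last step using $\mu_a\ne\mu_{a+1}$, and combining gives $N^\Lam(\mu,a+1)=N^\Lam(\mu s_a,a)$.

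The case $k=a$ carries the correction term $\langle\alpha_{\mu_{a+1}},h_{\mu_a}\rangle$ and is where the only real care is needed. Here I would show $J_{d'}^{<a+1}=J_{d_\mu}^{<a}\sqcup\{a\}$: for $j<a$ the conditions $d'(j)<d'(a+1)=d_\mu(a)$ and $d_\mu(j)<d_\mu(a)$ coincide, while $a\in J_{d'}^{<a+1}$ because $d'(a)=d_\mu(a+1)<d_\mu(a)=d'(a+1)$. The extra index $a$ contributes the root $\alpha_{(\mu s_a)_a}=\alpha_{\mu_{a+1}}$ and the coroot is unchanged ($(\mu s_a)_{a+1}=\mu_a$), so $N^\Lam(d',\mu s_a,a+1)=N^\Lam(d_\mu,\mu,a)-\langle\alpha_{\mu_{a+1}},h_{\mu_a}\rangle$. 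Finally $|\{j<a+1:(\mu s_a)_j=(\mu s_a)_{a+1}\}|=|\{j<a:\mu_j=\mu_a\}|$, again by $\mu_a\ne\mu_{a+1}$, which is exactly the defect count entering $N^\Lam(\mu,a)$; assembling the pieces gives $N^\Lam(\mu,a)=N^\Lam(\mu s_a,a+1)+\langle\alpha_{\mu_{a+1}},h_{\mu_a}\rangle$. The main (and essentially only) obstacle is thus purely bookkeeping: keeping straight which of $d_\mu(a),d_\mu(a+1)$ is the larger, and invoking $\mu_a\ne\mu_{a+1}$ at exactly the two spots---the defect counts for $k=a$ and $k=a+1$---where it is genuinely used.
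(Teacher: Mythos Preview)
Your proof is correct and follows essentially the same approach as the paper: comparing the index sets $J_{d_\mu}^{<k}$ and $J_{d_{\mu s_a}}^{<k'}$ via the involution $s_a$ (the paper phrases this as the bijection $\theta_a\colon t\mapsto s_a(t)$). You are in fact more careful than the paper in two places: you explicitly derive $\mu_a\neq\mu_{a+1}$ from Lemma~\ref{Lemma 12}, and you track the second summand $|\{j<k:\mu_j=\mu_k\}|$ in Definition~\ref{weight}, which the paper's proof leaves implicit.
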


\begin{proof} Suppose $k\neq a,a+1$. We consider the map $$
\theta_a: J_{d_\mu}^{<k}\rightarrow J_{d_\mu s_a}^{<k},\,\, t\mapsto s_a(t) .
$$
It is clear that $\theta_a$ is a well-defined bijection in this case. Thus $N^\Lam(\mu s_{a},k)=N^\Lam(\mu,k)$.

Suppose $k=a+1$. Then in this case it is clear that $J_{d_\mu}^{<a+1}=J_{d_\mu s_a}^{<a}$ because $a\notin J_{d_\mu}^{<a+1}$. Hence $N^\Lam(\mu s_{a},a+1)=N^\Lam(\mu s_{a},a)$.

Finally, suppose $k=a$. Then $\theta_a$ restricts to a bijection between $J_{d_\mu}^{<a}$ and $J_{d_\mu s_a}^{<a+1}\setminus\{a\}$. In this case it follows from definition that $N^\Lam(\mu,a)=N^\Lam(\mu s_{a},a+1)+\<\alpha_{\mu_{a+1}},h_{\mu_{a}}\>$.
\end{proof}

For each $1\leq t\leq p$, we set $\ell_t:=\<\Lam,\alpha_{\nu^t}\>$.

\begin{examp}\label{Example 2}
Let $\wnu=(1,1,2)$, $\mu=(2,1,1)$, then $d_{\mu}=s_2s_1$. By definition, we have $$N^\Lam(\mu,1)=\ell_{2},\,\,\,N^\Lam(\mu,2)=\ell_{1},\,\,\,N^\Lam(\mu,3)=\ell_{1}-1.$$
Now we consider $\mu\,s_{1}=(1,2,1)$. One can check directly that  $$
N^\Lam(\mu s_1,1)=\ell_{1},\,\,\,N^\Lam(\mu s_1,2)=\ell_{2}-\<\alpha_1,h_2\>,\,\,\, N^\Lam(\mu s_1,3)=\ell_{1}-1.  $$
\end{examp}

\begin{cor}\label{prop 3} Suppose that $N^\Lam(\mu,k)>0$ for any $1\leq k\leq n$. Let $1\leq t<n$ such that $d_\mu>d_\mu s_{t}$. Then the map $\phi_{t}:\,e(\wnu)\RR^\Lam({\beta})e(\mu)\rightarrow e(\wnu)\RR^\Lam({\beta})e(\mu s_{t})$ given by right multiplication of $\psi_{t}$ is injective. More generally, if $d_{\mu}=u_{1}u_{2}$ with $\ell(w)=\ell(u_{1})+\ell(u_{2})$, then the map $\phi_{u_{2}}:\,e(\wnu)\RR^\Lam({\beta})e(\mu)\rightarrow e(\wnu)\RR^\Lam({\beta})e(\mu u_{2}^{-1})$ given by right multiplication of $\psi_{u_{2}^{-1}}\,\,$ is injective.
\end{cor}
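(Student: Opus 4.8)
The plan is to reduce to the case of a single simple transposition $u_2=s_t$ and then to read off injectivity directly from the explicit monomial bases of Theorem~\ref{mainthm3b}.

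\emph{Reduction to one step.} Suppose $d_\mu=u_1u_2$ with $\ell(d_\mu)=\ell(u_1)+\ell(u_2)$. I would induct on $\ell(u_2)$, the case $\ell(u_2)=0$ being trivial. Writing $u_2=u_2's_t$ with $\ell(u_2)=\ell(u_2')+1$, one has $d_\mu=(u_1u_2')s_t$ with lengths adding, hence $d_\mu>d_\mu s_t$. Granting the one-step case, $\phi_t\colon e(\wnu)\RR^\Lam(\beta)e(\mu)\to e(\wnu)\RR^\Lam(\beta)e(\mu s_t)$ is injective; by Lemma~\ref{Lemma 12}, $d_{\mu s_t}=d_\mu s_t=u_1u_2'$ with $\ell(d_{\mu s_t})=\ell(u_1)+\ell(u_2')$, and by Lemma~\ref{Lemma 13} all $N^\Lam(\mu s_t,k)$ remain positive, so the inductive hypothesis applied to $\mu s_t$ gives injectivity of $\phi_{u_2'}\colon e(\wnu)\RR^\Lam(\beta)e(\mu s_t)\to e(\wnu)\RR^\Lam(\beta)e(\mu u_2^{-1})$. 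Choosing the reduced word of $u_2^{-1}=s_t(u_2')^{-1}$ that begins with $s_t$, the composite $\phi_{u_2'}\circ\phi_t$ is exactly right multiplication by $\psi_{u_2^{-1}}$, hence injective. It therefore suffices to treat $\phi_t$ under the hypotheses $d_\mu>d_\mu s_t$ and $N^\Lam(\mu,k)>0$ for all $k$.

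\emph{The one-step map.} From $d_\mu>d_\mu s_t$ I would first note that $\mu_t\neq\mu_{t+1}$: otherwise $\mu s_t=\mu$, and $d_\mu s_t$ would be an element of $\Sym_\fb d_\mu=\Sym(\mu,\wnu)$ shorter than $d_\mu$, contradicting the minimality of $d_\mu$. By Lemma~\ref{Lemma 12}, $d_{\mu s_t}=d_\mu s_t$ with $\ell(d_{\mu s_t})=\ell(d_\mu)-1$, and Lemma~\ref{Lemma 13} gives
\[
N^\Lam(\mu s_t,k)=N^\Lam(\mu,k)\ \ (k\neq t,t+1),\qquad N^\Lam(\mu s_t,t)=N^\Lam(\mu,t+1),\qquad N^\Lam(\mu s_t,t+1)=N^\Lam(\mu,t)+|a_{\mu_t\mu_{t+1}}|,
\]
so in particular all $N^\Lam(\mu s_t,k)>0$ and Theorem~\ref{mainthm3b} supplies monomial bases for both $e(\wnu)\RR^\Lam(\beta)e(\mu)$ and $e(\wnu)\RR^\Lam(\beta)e(\mu s_t)$. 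By Lemma~\ref{keylem3}, $e(\wnu)\RR^\Lam(\beta)e(\mu s_t)$ is the direct sum over $u\in\Sym(\mu s_t,\wnu)$ of the subspaces $\psi^{\bf 1}_{u}(K[x_1,\dots,x_n]/(p_1,\dots,p_n))e(\mu s_t)$, where $p_k$ is monic in $x_k$ of degree $N^\Lam(\mu s_t,k)$ with coefficients in $K[x_1,\dots,x_{k-1}]$, so that $\psi^{\bf 1}_{u}fe(\mu s_t)$ depends only on $f$ modulo the ideal $(p_1,\dots,p_n)$. Using the defining relations of $\RR^\Lam(\beta)$ with the Demazure term $\delta_{\mu_t\mu_{t+1}}=0$, the relation $\psi_t^2e(\mu s_t)=Q_{\mu_{t+1},\mu_t}(x_t,x_{t+1})e(\mu s_t)$, and Lemma~\ref{psidmu} together with $d_\mu=d_{\mu s_t}s_t$, I would show that for any monomial basis element
\[
\phi_t\Bigl(\psi^{\bf 1}_{w}\prod_{k=1}^{n}x_k^{r_k}e(\mu)\Bigr)=\psi^{\bf 1}_{ws_t}\,Q_{\mu_{t+1},\mu_t}(x_t,x_{t+1})\,\Bigl(\prod_{k\neq t,t+1}x_k^{r_k}\Bigr)x_t^{r_{t+1}}x_{t+1}^{r_t}\,e(\mu s_t),
\]
while $w\mapsto ws_t$ is a bijection $\Sym(\mu,\wnu)\to\Sym(\mu s_t,\wnu)$. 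Since the images of basis elements attached to distinct $w$ lie in distinct summands of $e(\wnu)\RR^\Lam(\beta)e(\mu s_t)$, injectivity of $\phi_t$ becomes equivalent to the assertion that, modulo $(p_1,\dots,p_n)$, the polynomials $Q_{\mu_{t+1},\mu_t}(x_t,x_{t+1})\bigl(\prod_{k\neq t,t+1}x_k^{r_k}\bigr)x_t^{r_{t+1}}x_{t+1}^{r_t}$ with $0\le r_k<N^\Lam(\mu,k)$ are linearly independent.

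\emph{Linear independence.} From $Q_{\mu_{t+1},\mu_t}(u,v)=Q_{\mu_t,\mu_{t+1}}(v,u)$ and the normalisation $t_{\mu_t,\mu_{t+1};-a_{\mu_t\mu_{t+1}},0}\in K^{\times}$ built into the $Q$-polynomials, the top $x_{t+1}$-degree of $Q_{\mu_{t+1},\mu_t}(x_t,x_{t+1})$ equals $m:=|a_{\mu_t\mu_{t+1}}|$, attained by a term that is a unit times $x_{t+1}^{m}$ and free of $x_t$. Hence the $x_{t+1}$-leading term of $Q_{\mu_{t+1},\mu_t}(x_t,x_{t+1})\bigl(\prod_{k\neq t,t+1}x_k^{r_k}\bigr)x_t^{r_{t+1}}x_{t+1}^{r_t}$ is a unit times $x_{t+1}^{m+r_t}x_t^{r_{t+1}}\prod_{k\neq t,t+1}x_k^{r_k}$; since $r_t<N^\Lam(\mu,t)$ forces $m+r_t<N^\Lam(\mu s_t,t+1)$, since $r_{t+1}<N^\Lam(\mu,t+1)=N^\Lam(\mu s_t,t)$, and since $r_k<N^\Lam(\mu s_t,k)$ for $k\neq t,t+1$, this monomial is one of the standard basis monomials of $K[x_1,\dots,x_n]/(p_1,\dots,p_n)$. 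Moreover every monomial occurring in $Q_{\mu_{t+1},\mu_t}(x_t,x_{t+1})\bigl(\prod_{k\neq t,t+1}x_k^{r_k}\bigr)x_t^{r_{t+1}}x_{t+1}^{r_t}$ already has $x_k$-degree less than $N^\Lam(\mu s_t,k)$ for every $k\neq t$, so the only reductions ever required are modulo $p_t$ and, recursively, modulo $p_{t-1},\dots,p_1$; as these $p_j$ ($j\le t$) involve neither $x_{t+1}$ nor any $x_k$ with $k>t+1$, such reductions never alter the $x_{t+1}$-degree of any term and never invoke $p_{t+1}$. Extracting the highest $x_{t+1}$-degree occurring in a hypothetical nontrivial relation among these elements therefore produces a relation among distinct standard basis monomials, forcing the corresponding coefficients to vanish; iterating downward in the $x_{t+1}$-degree yields the required linear independence, and the corollary follows.

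I expect the main technical obstacle to be the one-step computation of $\phi_t$ — moving $\psi_t$ past $\prod_k x_k^{r_k}$ and re-expressing $\psi^{\bf 1}_{w}\psi_t e(\mu s_t)$ through $\psi_t^2$ — together with the bookkeeping in the last paragraph that the reductions modulo $(p_1,\dots,p_n)$ can never raise the $x_{t+1}$-degree; the remaining ingredients reduce to combinatorics of the integers $N^\Lam(\cdot,k)$ and of $\Sym_\fb$-cosets.
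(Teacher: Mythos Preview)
Your proof is correct and follows essentially the same route as the paper: compute $\phi_t$ on the monomial basis of Theorem~\ref{mainthm3b}, use $\psi_t^2e(\mu s_t)=Q_{\mu_{t+1},\mu_t}(x_t,x_{t+1})e(\mu s_t)$ together with Lemmas~\ref{Lemma 12}--\ref{Lemma 13} to identify the leading $x_{t+1}$-term of each image as a distinct basis monomial on the target side, and conclude linear independence. Your explicit inductive reduction of the general $u_2$-case to the simple-reflection case and your more careful treatment of why reductions modulo $p_1,\dots,p_t$ never disturb the $x_{t+1}$-degree are welcome refinements, but the underlying argument is the same.
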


\begin{proof} By Lemma \ref{Lemma 12}, $d_{\mu s_{t}}=d_{\mu}s_{t}$. We can write $$
\psi_{d_{\mu}}e(\mu)=\psi_{d_{\mu}s_{t}}\psi_{s_{t}}e(\mu)=\psi_{d_{\mu s_{t}}}e(\mu s_{t})\psi_{s_{t}}. $$
The assumption that $N^\Lam(\mu,k)>0$ for any $1\leq k\leq n$ and Lemma \ref{Lemma 13} imply that $N^\Lam(\mu s_{t},k)>0$ for any $1\leq k\leq n$. Since $\psi_{t}\psi_{t}e(\mu s_{t})=Q_{\mu_{t+1},\mu_{t}}(x_{t},x_{t+1})e(\mu s_{t})$, it follows that for any $w\in\Sym(\mu,\wnu)$ and $r_k\in\N$, $1\leq k\leq n$, $\phi_t(\psi_{w}\prod_{k=1}^{n}x_{k}^{r_{k}}e(\mu))$ is of the form $\psi_{w s_{t}}\prod_{k=1}^{n}f_{k}e(\mu s_t)$, where
\[f_{k}=
\begin{cases}
x_{k}^{r_{k}}& k\neq t\,\,,t+1\\
x_{t}^{r_{t+1}} & k=t\\
x_{t+1}^{r_{t}}Q_{\nu_{t+1},\nu_{t}}(x_{t},x_{t+1})& k=t+1.\\
\end{cases}\]
Note that $f_{t+1}$ is a polynomial in $x_{t+1}$ of degree $r_t-\<\alpha_{\mu_{t+1}},h_{\mu_{t}}\>$ with leading coefficient in $K^\times$ and other coefficients in $K[x_{1},x_{2},\cdots,x_{t}]$. By Lemma \ref{keylem3}, we can write $\phi_t(\psi_{w}\prod_{k=1}^{n}x_{k}^{r_{k}}e(\mu))=c_0\psi_{w s_{t}}\prod_{k=1}^{n}x_{k}^{r'_{k}}e(\mu s_t)+\text{``lower terms''}$, where $c_0\in K^\times$ and ``lower terms'' means the degree of $x_{t+1}$ is less than $r_t-\<\alpha_{\mu_{t+1}},h_{\mu_{t}}\>$, and \[r'_{k}=
\begin{cases}
r_{k}& k\neq t\,\,,t+1\\
r_{t+1} & k=t\\
r_t-\<\alpha_{\mu_{t+1}},h_{\mu_{t}}\>& k=t+1.\\
\end{cases}\]
By Lemma \ref{Lemma 13}, if $k\neq t, t+1$, then $r'_{k}<N^\Lam(\mu s_{t},k)=N^\Lam(\mu,k)$ if $r_{k}<N^\Lam(\mu,k)$; and $r'_{t}=r_{t+1}<N^\Lam(\mu s_{t},t)=N^\Lam(\mu,t+1)$ if $r_{t+1}<N^\Lam(\mu,t+1)$; and $r'_{t+1}=r_{t}-\<\alpha_{\mu_{t+1}},h_{\mu_{t}}\><N^\Lam(\mu s_{t},t+1)$ if $r_{t}<N^\Lam(\mu,t)$.
By Theorem \ref{mainthm3b}, we know that $$\Bigl\{\psi_{w }\prod_{k=1}^{n}x_{k}^{r_{k}}e(\mu)\Bigm| w \in\Sym(\mu,\wnu), 0\leq r_{k}<N^\Lam(\mu,k), \forall\,1\leq k\leq n\Bigr\}$$
forms a $K$-basis of $e(\wnu)\RR^\Lam({\beta})e(\mu)$. Similarly, the set $$\Bigl\{\psi_{w s_{t}}\prod_{k=1}^{n}x_{k}^{r_{k}}e(\mu s_{t})\Bigm| w \in\Sym(\mu,\wnu), 0\leq r_{k}<N^\Lam(\mu s_{t},k), \forall\,1\leq k\leq n\Bigr\}$$
forms a $K$-basis of $e(\wnu)\RR^\Lam({\beta})e(\mu s_{t})$.

Now using Theorem \ref{mainthm3b} and Lemma \ref{keylem3}, we can see that the image of each basis element $\psi_{w s_{t}}\prod_{k=1}^{n}x_{k}^{r_{k}}e(\mu)$ under $\phi_t$ has a leading term and they are $K$-linearly independent. It follows that the image of those basis elements of $e(\wnu)\RR^\Lam({\beta})e(\mu)$ under $\phi_t$ are $K$-linearly independent, which implies that $\phi_t$ is injective.
%
\end{proof}

\subsection{The monomial bases of $\RR^\Lam(\beta)$ when $n=3$}

In this subsection, we shall completely determine a monomial basis for $\RR^\Lam(\beta)$ when $n=3$. Let $\beta\in Q_3^+$. Note that $\RR^\Lam(\beta)=\oplus_{\nu,\mu\in I^\beta}e(\mu)\RR^\Lam(\beta)e(\nu)$. By the results we have obtained in the last two subsections, we can assume without loss of generality that $\beta=2\alpha_1+\alpha_2$. We only need to construct a monomial basis for $e(1,2,1)\RR^\Lam(\beta)e(1,2,1)$. We set $\nu:=(1,2,1)$. Then
$\Sym(\nu,\nu)=\{(1), w:=(1,3)\}$, where $(1,3)$ denotes the transposition which swaps $1$ and $3$. We set $l_1:=\<\Lam,h_1\>, l_2:=\<\Lam,h_2\>$. Then we have $$\begin{aligned}
& N^\Lam(1,\nu,1)=l_{1}\,\,,N^\Lam(1,\nu,2)=l_{2}-a_{21}\,\,,N^\Lam(1,\nu,3)=l_{1}-a_{12}-2; \\
& N^\Lam(w,\nu,1)=l_{1}\,\,,N^\Lam(w,\nu,2)=l_{2}\,\,,N^\Lam(w,\nu,3)=l_{1}.
\end{aligned}
$$

\begin{lem}\label{iso2}
Suppose $\nu\,,\nu'\in I^{\beta}$, $1\leq t\leq n$ with $a_{\nu_{t},\nu_{t+1}}=0$. Then the map $\phi:\,e(\nu')\RR^\Lam({\beta})e(\nu)\rightarrow e(\nu')\RR^\Lam({\beta})e(\nu s_{t})$ given by right multiplication of $\psi_{t}$ is an isomorphism.
\end{lem}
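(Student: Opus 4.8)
The plan is to show $\phi$ is invertible by exhibiting an explicit two-sided inverse, which will again be a nonzero scalar multiple of right multiplication by $\psi_t$. The first step is to pin down the polynomial $Q_{\nu_t,\nu_{t+1}}(u,v)$ under the hypothesis $a_{\nu_t,\nu_{t+1}}=0$. Since $(d_ia_{ij})_{i,j\in I}$ is symmetric with all $d_i>0$, the vanishing $a_{\nu_t,\nu_{t+1}}=0$ also forces $a_{\nu_{t+1},\nu_t}=0$, and $(\alpha_{\nu_t}|\alpha_{\nu_{t+1}})=d_{\nu_t}a_{\nu_t,\nu_{t+1}}=0$. Hence in the defining sum for $Q_{\nu_t,\nu_{t+1}}(u,v)$ the exponents $p,q\in\N$ must satisfy $p(\alpha_{\nu_t}|\alpha_{\nu_t})+q(\alpha_{\nu_{t+1}}|\alpha_{\nu_{t+1}})=0$; as $(\alpha_i|\alpha_i)=2d_i>0$ for every $i$, this is possible only when $p=q=0$. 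Therefore $Q_{\nu_t,\nu_{t+1}}(u,v)=t_{\nu_t,\nu_{t+1};0,0}=:c$ is a constant, and since $-a_{\nu_t,\nu_{t+1}}=0$ we have $c=t_{\nu_t,\nu_{t+1};-a_{\nu_t,\nu_{t+1}},0}\in K^\times$. The symmetry requirement $Q_{i,j}(u,v)=Q_{j,i}(v,u)$ then also gives $Q_{\nu_{t+1},\nu_t}(u,v)=c$.

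With this in hand I would first check that $\phi$ is well defined: for $a=e(\nu')ae(\nu)$ the relation $e(\nu)\psi_t=\psi_te(s_t\nu)=\psi_te(\nu s_t)$ shows $a\psi_t\in e(\nu')\RR^\Lam(\beta)e(\nu s_t)$ (using $s_t\nu=\nu s_t$). By the same token $b\mapsto b\psi_t$ defines a map $\phi'\colon e(\nu')\RR^\Lam(\beta)e(\nu s_t)\to e(\nu')\RR^\Lam(\beta)e(\nu)$ (here $\nu s_ts_t=\nu$). Now the relations of Definition \ref{D:QuiverRelations} give $e(\nu)\psi_t\psi_t=\psi_te(\nu s_t)\psi_t=\psi_t\psi_te(\nu)=Q_{\nu_t,\nu_{t+1}}(x_t,x_{t+1})e(\nu)=c\,e(\nu)$, so $\phi'(\phi(a))=a\psi_t^2=c\,a$ for all $a$; symmetrically, $e(\nu s_t)\psi_t\psi_t=Q_{(\nu s_t)_t,(\nu s_t)_{t+1}}(x_t,x_{t+1})e(\nu s_t)=Q_{\nu_{t+1},\nu_t}(x_t,x_{t+1})e(\nu s_t)=c\,e(\nu s_t)$, so $\phi(\phi'(b))=c\,b$ for all $b$. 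Thus $c^{-1}\phi'$ is a two-sided inverse of $\phi$, so $\phi$ is an isomorphism; in fact it is a graded isomorphism, since $\deg\psi_te(\nu)=-(\alpha_{\nu_t}|\alpha_{\nu_{t+1}})=0$.

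The argument is short, and the only step that is not purely formal bookkeeping with the KLR relations is the first one, identifying $Q_{\nu_t,\nu_{t+1}}$ with a nonzero scalar; this is where the hypothesis $a_{\nu_t,\nu_{t+1}}=0$ and the positivity of the symmetrizing numbers $d_i$ (equivalently of $(\alpha_i|\alpha_i)$) enter. Once $Q_{\nu_t,\nu_{t+1}}$ is known to be a unit, everything reduces to the two relations $\psi_re(\mu)=e(s_r\mu)\psi_r$ and $\psi_r^2e(\mu)=Q_{\mu_r,\mu_{r+1}}(x_r,x_{r+1})e(\mu)$, so I do not anticipate any real obstacle.
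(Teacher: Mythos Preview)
Your argument is correct and is essentially the same as the paper's: the paper simply observes that $\psi_t^2e(\nu)=e(\nu)$ under the hypothesis, which is exactly your computation specialized to the (implicit) normalization $t_{\nu_t,\nu_{t+1};0,0}=1$. Your version is in fact a bit more careful, since you allow the constant $c=Q_{\nu_t,\nu_{t+1}}(u,v)\in K^\times$ to be arbitrary and then invert by $c^{-1}\psi_t$ rather than by $\psi_t$ itself.
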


\begin{proof} This is clear because $\psi_t^2e(\nu)=e(\nu)$ by assumption.
\end{proof}

Suppose $a_{12}=0$, then $a_{21}=0$. Applying Corollary \ref{maincor1} we can get that $$
\dim\,e(1,2,1)\RR^\Lam({\beta})e(1,2,1)=2l_{1}(l_{1}-1)l_{2}, $$
which is exactly the same as the dimension of $e(1,2,1)\RR^\Lam({\beta})e(1,1,2)$. Now using Lemma \ref{iso2}, one can easily get a monomial basis of $e(1,2,1)\RR^\Lam({\beta})e(1,2,1)$ from the known monomial basis (see Theorem \ref{mainthm3b}) of $e(1,2,1)\RR^\Lam({\beta})e(1,1,2)$ in this case.

Henceforth we assume $a_{12}\neq 0$ and thus $a_{12}\leq -1\geq a_{21}$. By definition, we have $a^{\Lambda}_{1}(x_{1})e(1,2,1)=0$, which implies that  \begin{equation}\label{11a}
x_1^{l_1}e(1,2,1)\in\text{$K$-Span}\{x_1^{c_1}e(1,2,1)|0\leq c_1<l_1\} .
\end{equation}
Similarly, \begin{equation}\label{11b}
Q_{1,2}(x_{1},x_{2})a^{\Lambda}_{2}(x_{2})e(1,2,1)=\psi_{1}a^{\Lambda}_{2}(x_{1})\psi_{1}e(1,2,1)=0,
\end{equation}
which implies that \begin{equation}\label{11c}
x_2^{l_2-a_{21}}e(1,2,1)\in\text{$K$-Span}\{x_1^{c_1}x_2^{c_2}e(1,2,1)|c_1\geq 0, 0\leq c_2<l_2-a_{21}\} .
\end{equation}

Similarly, $\psi_{1}\psi_{2}\psi_1a^{\Lambda}_{2}(x_{2})e(1,2,1)=\psi_1\psi_2a_2^\Lam(x_1)\psi_1e(1,2,1)=0$ together with $$\psi_{1}\psi_{2}\psi_1a^{\Lambda}_{1}(x_{1})e(1,2,1)=0, $$ imply that \begin{equation}\label{12a}
\psi_1\psi_2\psi_1x_1^{l_1}x_2^{l_2}e(1,2,1)\in\text{$K$-Span}\{\psi_1\psi_2\psi_1x_1^{c_1}x_2^{c_2}e(1,2,1)|0\leq c_1<l_1,\,0\leq c_2<l_2\} .
\end{equation}
As a result, we have that for any $a_1,a_2\in\N$, $$\begin{aligned}
& x_1^{a_1}x_2^{a_2}e(1,2,1)\in\text{$K$-Span}\{x_1^{c_1}x_2^{c_2}e(1,2,1)|0\leq c_1<l_1, 0\leq c_2<l_2-a_{21}\},\\
&\psi_{1}\psi_{2}\psi_{1}x_1^{a_1}x_2^{a_2}e(1,2,1)\in\text{$K$-Span}\{\psi_{1}\psi_{2}\psi_{1}x_1^{c_1}x_2^{c_2}e(1,2,1)|0\leq c_1<l_1, 0\leq c_2<l_2\} .
\end{aligned}
$$

Following \cite[(3.4)]{KK}, we define $$
\overline{Q}_{1,2,3}=\sum_{\mu\in I^3,\mu_1=\mu_3}\frac{Q_{\mu_1,\mu_2}(x_1,x_2)-Q_{\mu_1,\mu_2}(x_3,x_2)}{x_1-x_3}e(\mu) .
$$
Applying \cite[(3.7)]{KK}, we can deduce that \begin{equation}\label{15a}
\psi_{1}\psi_{2}\psi_{1}a^{\Lambda}_{1}(x_{3})e(1,2,1)-Q_{1,2}(x_{1},x_{2})s_{1}(\partial_{2}a_{1}(x_{2}))e(1,2,1)
=a^{\Lambda}_{1}(x_{1})\psi_{1}\psi_{2}\psi_{1}e(1,2,1)=0,
\end{equation}
Note that the degree of $x_3$ in $a^{\Lambda}_{1}(x_{3})$ is $l_1$, while the degree of $x_3$ in $Q_{1,2}(x_{1},x_{2})s_{1}(\partial_{2}a_{1}(x_{2}))$ is $l_1-1$. Moreover, the coefficient of $x_3^{l_1}$ in $a^{\Lambda}_{1}(x_{3})$ is in $K^\times$. Similarly, applying \cite[(3.7)]{KK} and the above definition, we can get that \begin{equation}\label{15b}
\psi_{1}\psi_{2}\psi_{1}s_{1}(\partial_{2}a_{1}(x_{2}))e(1,2,1)+\overline{Q}_{1,2,3}s_{1}(\partial_{2}a_{1}(x_{2}))e(1,2,1)
=\psi_{2}\psi_{1}a^{\Lambda}_{1}(x_{1})\psi_{1}\psi_{2}e(1,2,1)=0.
\end{equation}
Note the degree of $x_3$ in $s_{1}(\partial_{2}a_{1}(x_{2}))$ is $l_1-1$, while the degree of $x_3$ in $\overline{Q}_{1,2,3}s_{1}(\partial_{2}a_{1}(x_{2}))$ is $l_1-a_{12}-2\geq l_1-1$. Moreover, the coefficient of $x_3^{l_1-1}$ in $s_{1}(\partial_{2}a_{1}(x_{2}))$ is in $K^\times$, and the coefficient of $x_3^{l_1-a_{12}-2}$ in $\overline{Q}_{1,2,3}s_{1}(\partial_{2}a_{1}(x_{2}))$ is in $K^\times$ too.

Using (\ref{15a}), (\ref{15b}) and the two displayed equalities in the last paragraph, we can deduce that the following result.

\begin{thm}\label{432} Suppose that $a_{1,2}\neq 0$ and $\beta=2\alpha_1+\alpha_2$. Then the following subset $$\begin{aligned}
&\{\psi_{1}\psi_{2}\psi_{1}x_{1}^{k_{1}}x_{2}^{k_{2}}x_{3}^{k_{3}}|k_{1}<l_{1},\,\,k_{2}<l_{2},\,\,k_{3}<l_{1}\}\\
&\qquad \bigcup
\{x_{1}^{k_{1}}x_{2}^{k_{2}}x_{3}^{k_{3}}|k_{1}<l_{1},\,\,k_{2}<l_{2}-a_{21},\,\,k_{3}<l_{1}-a_{12}-2\},\end{aligned}
$$
forms a $K$-basis of $e(1,2,1)\RR^\Lam(\beta)e(1,2,1)$, where $l_1=\<\Lam,h_1\>$, $l_2=\<\Lam,h_2\>$.
\end{thm}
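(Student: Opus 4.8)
The plan is to count dimensions first and then establish spanning. Put $\nu:=(1,2,1)$. By Corollary~\ref{maincor1}, using $\Sym(\nu,\nu)=\{(1),w\}$ with $w=(1,3)$ and the values of $N^\Lam(u,\nu,t)$ listed just before the statement, we get
\[\dim e(1,2,1)\RR^\Lam(\beta)e(1,2,1)=l_1^2l_2+l_1(l_2-a_{21})(l_1-a_{12}-2),\]
which is exactly the cardinality of the proposed set (its first family has $l_1\cdot l_2\cdot l_1$ elements, its second has $l_1\cdot(l_2-a_{21})\cdot(l_1-a_{12}-2)$ elements). Hence it suffices to show the proposed set spans $e(1,2,1)\RR^\Lam(\beta)e(1,2,1)$; linear independence, and so the basis claim, then follows for free, and no graded refinement is needed.

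Since $w=(1,3)=s_1s_2s_1$, moving all $\psi$'s to the left and all $x$'s to the right in a word and reducing the $\psi$-part to the fixed reduced expression $\psi_1\psi_2\psi_1$ shows that $e(1,2,1)\RR^\Lam(\beta)e(1,2,1)$ is spanned over $K$ by the monomials $x_1^ax_2^bx_3^ce(\nu)$ (the ``no-$\psi$'' type) and $\psi_1\psi_2\psi_1x_1^ax_2^bx_3^ce(\nu)$ (the ``$\psi$'' type), $a,b,c\in\N$. I would reduce an arbitrary such monomial into the span of the proposed set using the identities already assembled above, each of which may be multiplied on the right by an arbitrary monomial in $x_1,x_2,x_3$: the cyclotomic relation $a^\Lam_1(x_1)e(\nu)=0$, i.e.\ (\ref{11a}), forces the $x_1$-exponent below $l_1$; the identity (\ref{11c}), whose top $x_2$-term $x_2^{l_2-a_{21}}$ has a unit coefficient, forces the $x_2$-exponent below $l_2-a_{21}$ in a no-$\psi$ monomial; the identity (\ref{12a}), and more generally the displayed equalities following it, forces the $x_1$- and $x_2$-exponents below $l_1$ and $l_2$ in a $\psi$ monomial without changing the $x_3$-exponent; (\ref{15a}) rewrites $\psi_1\psi_2\psi_1a^\Lam_1(x_3)e(\nu)$ (top term $x_3^{l_1}$ with unit coefficient) as $Q_{1,2}(x_1,x_2)s_1(\partial_2a_1(x_2))e(\nu)$, a no-$\psi$ element of $x_3$-degree $l_1-1$; and (\ref{15b}) rewrites $\overline{Q}_{1,2,3}s_1(\partial_2a_1(x_2))e(\nu)$ (top term $x_3^{\,l_1-a_{12}-2}$ with unit coefficient) as $-\psi_1\psi_2\psi_1s_1(\partial_2a_1(x_2))e(\nu)$, a $\psi$ monomial of $x_3$-degree $l_1-1<l_1$ and $x_1$-degree $<l_1$.

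The reduction is run as a single induction on the $x_3$-exponent $c$. A no-$\psi$ monomial with $c<l_1-a_{12}-2$ is reduced in $x_1,x_2$ by (\ref{11a}) and (\ref{11c}) (which do not change $c$) and then lies in the second family; if $c\ge l_1-a_{12}-2$, one uses (\ref{15b}), right-multiplied by the appropriate monomial, to rewrite it as a combination of no-$\psi$ monomials of strictly smaller $x_3$-exponent (handled by induction) and a $\psi$ monomial of $x_3$-exponent $c+a_{12}+1\le c$, which is either already of $x_3$-degree $<l_1$ (then reduced in $x_1,x_2$ by (\ref{12a}) into the first family) or of $x_3$-degree $\ge l_1$ (then rewritten by (\ref{15a}) as no-$\psi$ monomials of strictly smaller $x_3$-exponent). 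Dually, a $\psi$ monomial with $c\ge l_1$ is rewritten by (\ref{15a}) as no-$\psi$ monomials of $x_3$-exponent $c-1$, while one with $c<l_1$ is reduced in $x_1,x_2$ by (\ref{12a}) into the first family. Collecting these cases shows the proposed set spans, and combined with the dimension count it is a basis.

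The principal obstacle is the degree bookkeeping that makes this induction well-founded. One needs that $s_1(\partial_2a_1(x_2))$ has $x_3$-degree exactly $l_1-1$ with unit leading $x_3$-coefficient and $x_1$-degree at most $l_1-1$; that $\overline{Q}_{1,2,3}$, restricted to $e(\nu)$, has $x_3$-degree exactly $-a_{12}-1$ with unit leading coefficient, so that $\overline{Q}_{1,2,3}s_1(\partial_2a_1(x_2))e(\nu)$ has $x_3$-degree exactly $l_1-a_{12}-2$, the threshold appearing in the statement; and that after right-multiplying (\ref{15a}) and (\ref{15b}) by a monomial and re-expanding, the no-$\psi$ error terms have strictly smaller $x_3$-exponent while the $\psi$ error term either already has $x_3$-degree $<l_1$ or, via (\ref{15a}), becomes no-$\psi$ monomials of strictly smaller $x_3$-exponent. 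The borderline $a_{12}=-1$ (where $l_1-a_{12}-2=l_1-1<l_1$) needs a little care, but in every case the recursion strictly decreases $c$ or terminates immediately, so the induction closes.
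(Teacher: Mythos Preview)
Your proposal is correct and follows essentially the same approach as the paper: show that the proposed monomials span using the identities (\ref{11a}), (\ref{11c}), (\ref{12a}), (\ref{15a}), (\ref{15b}), and then compare with the dimension given by Corollary~\ref{maincor1}. The paper's own proof simply says ``by the discussion before the theorem'' the set spans and then invokes the dimension count; you have supplied an explicit induction on the $x_3$-exponent that makes this spanning argument precise, including the borderline case $a_{12}=-1$ where the well-foundedness requires handling $\psi$-monomials before no-$\psi$ monomials at the same $x_3$-degree.
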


\begin{proof} By the discussion before the theorem, we see that the elements in the above subset are $K$-linear generators of $e(1,2,1)\RR^\Lam(\beta)e(1,2,1)$. Using dimension formula Corollary \ref{maincor1}, we see this subset has the same cardinality as the dimension of $e(1,2,1)\RR^\Lam(\beta)e(1,2,1)$. Thus it must form a $K$-basis of $e(1,2,1)\RR^\Lam(\beta)e(1,2,1)$.
This completes the proof of the theorem.
\end{proof}

\begin{rem}
When $a_{12}=0$, the set in Theorem \ref{432} will not be a $K$-linear basis of $e(1,2,1)\RR^\Lam(\beta)e(1,2,1)$. Actually, Lemma \ref{keylem3} tells us the following set is $K$-linearly dependent in $e(2,1,1)\RR^\Lam(\beta)e(1,2,1)$ :$$\{\psi_{2}\psi_{1}x_{1}^{k_{1}}x_{2}^{k_{2}}x_{3}^{k_{3}}|k_{1}<l_{1},\,\,k_{2}<l_{2},\,\,k_{3}<l_{1}\}.
$$ Hence, $$\{\psi_{1}\psi_{2}\psi_{1}x_{1}^{k_{1}}x_{2}^{k_{2}}x_{3}^{k_{3}}|k_{1}<l_{1},\,\,k_{2}<l_{2},\,\,k_{3}<l_{1}\}$$
is $K$-linearly dependent too.
\end{rem}

\subsection{Some counter-examples on the graded freeness of $\RR^\Lam(n)$ over its subalgebra $\RR^\Lam(m)$ with $m<n$}

Let $\beta\in Q_n^+$ and $i\in I$ such that $e(\beta,i)\neq 0$. Kang and Kashiwara (\cite[Theorem 4.5]{KK}) have shown that $\RR^\Lam(\beta+\alpha_i)e(\beta,i)$ is a projective right $\RR^\Lam(\beta)$-module. It follows that
(\cite[Remark 4.20(ii)]{KK}) $\RR^\Lam(n)$ is a projective $\RR^\Lam(m)$-module when $n\geq m$, where $$\RR^\Lam(n)=\oplus_{\beta\in Q_n^+}\RR^\Lam(\beta).
$$
It is natural to ask whether $\RR^\Lam(n)$ is a free $\RR^\Lam(m)$-module. Moreover, when it is a free module, one can ask whether $\RR^\Lam(n)$ has a homogeneous basis over the subalgebra $\RR^\Lam(m)$.
In this subsection, we shall use our main results Theorem \ref{mainthmA} and Corollary \ref{maincor1} to give some examples to show that the answers to these questions are negative in general.

\begin{examp}
Let $A$ be of type $A^{(1)}_1$, i.e. $$A=\begin{pmatrix}2\,&-2\\
-2\,&2\end{pmatrix}.
$$
Assume $\Lam=\Lam_1+2\Lam_2$. By the Brundan-Kleshchev's isomorphism \cite{BK:GradedKL} and the Ariki-Koike bases for the cyclotomic Hecke algebras \cite{AK}, it is easy to see that $\RR^\Lam(2)$ is a free right $\RR^\Lam(1)$-module. However, using Theorem \ref{mainthmA}, we can get that $$\begin{aligned}
\dim_q\,\RR^\Lam(1)&=\dim_q\,\RR^\Lam(\alpha_1)+\dim_q\,\RR^\Lam(\alpha_2)\\
&=1+(1+q^2)=2+q^2 ,
\end{aligned}
$$
while $$\begin{aligned}
&\quad\,\dim_q\,\RR^\Lam(2)\\
&=\dim_q\,\RR^\Lam(2\alpha_1)+\dim_q\,\RR^\Lam(2\alpha_2)+\dim_q\,e(1,2)\RR^\Lam(\alpha_1+\alpha_2)e(1,2)\\
&\qquad +\dim_q\,e(1,2)\RR^\Lam(\alpha_1+\alpha_2)e(2,1)+\dim_q\,e(2,1)\RR^\Lam(\alpha_1+\alpha_2)e(1,2)\\
&\qquad\qquad +\dim_q\,e(2,1)\RR^\Lam(\alpha_1+\alpha_2)e(2,1)\\
&=0+(q^{-2}+2+q^2)+(1+q^2+q^4+q^6)+2(q^2+q^4)+(1+2q^2+2q^4+q^6)\\
&=2q^6+5q^4+6q^2+4+q^{-2}.
\end{aligned}
$$
This implies that $\dim_q\,\RR^\Lam(1)$ is not a factor of $\dim_q\,\RR^\Lam(2)$. Thus, as a free right $\RR^\Lam(1)$-module, $\RR^\Lam(2)$ does not have a homogeneous basis.
\end{examp}

\begin{examp}
Let $A$ be of type $A_2$, i.e. $$A=\begin{pmatrix}2\,&-1\\
-1\,&2\end{pmatrix}.
$$
Assume $\Lam=\Lam_1+\Lam_2$, $\beta=\alpha_1+\alpha_2$. Using Corollary \ref{maincor1}, we can get that $$\begin{aligned}
\dim\,\RR^\Lam(\beta)&=\dim\,e(1\,2)\RR^\Lam(\beta)e(1\,2)+\dim\,e(1\,2)\RR^\Lam(\beta)e(2\,1)\\
&\qquad +\dim\,e(2\,1)\RR^\Lam(\beta)e(1\,2)+\dim\,e(2\,1)\RR^\Lam(\beta)e(2\,1)\\
&=2+1+1+2=6.
\end{aligned}
$$
Similarly, $$\begin{aligned}
&\quad\,\dim\,\RR^\Lam(\beta+\alpha_1)e(\beta,1)\\
&=\dim\,\RR^\Lam(\beta+\alpha_1)e(1,2,1)+\RR^\Lam(\beta+\alpha_1)e(2,1,1)\\
&=\dim\,e(2,1,1)\RR^\Lam(\beta+\alpha_1)e(1,2,1)+\dim\,e(1,2,1)\RR^\Lam(\beta+\alpha_1)e(1,2,1)\\
&\qquad +\dim\,e(1,1,2)\RR^\Lam(\beta+\alpha_1)e(1,2,1)+\dim\,e(2,1,1)\RR^\Lam(\beta+\alpha_1)e(2,1,1)\\
&\qquad\qquad +\dim\,e(1,2,1)\RR^\Lam(\beta+\alpha_1)e(2,1,1)+\dim\,e(1,1,2)\RR^\Lam(\beta+\alpha_1)e(2,1,1)\\
&=2+1+0+4+2+0=9.
\end{aligned}
$$
Since $6\nmid 9$, it follows that $\RR^\Lam(\beta+\alpha_1)e(\beta,1)$ is not a free right $\RR^\Lam(\beta)$-module.
\end{examp}

\begin{examp}
Let $A$ be of type $A_3$, i.e. $$A=\begin{pmatrix}2\,&-1\,&0\\
-1\,&2\,&-1\\
0\,&-1\,&2\end{pmatrix}.
$$
Assume $\Lam=3\Lam_1+2\Lam_2+2\Lam_3$. Using Corollary \ref{maincor1}, we can get that $$\dim\,\RR^\Lam(1)=3+2+2=7,
$$
and $$\begin{aligned}
\dim\,\RR^\Lam(2)&=\dim\,\RR^\Lam(2\alpha_1)+\dim\,\RR^\Lam(2\alpha_2)+\dim\,\RR^\Lam(2\alpha_3)\\
&\qquad +\dim\,\RR^\Lam(\alpha_1+\alpha_2)+\dim\,\RR^\Lam(\alpha_1+\alpha_3)+\dim\,\RR^\Lam(\alpha_2+\alpha_3)\\
&=12+4+4+29+24+20=93.
\end{aligned}
$$
Again, we conclude that $\RR^\Lam(2)$ is not a free $\RR^\Lam(1)$-module.
\end{examp}


Let $\beta\in Q_n^+$. For each $i\in I$, there is a natural map $\gamma_{\beta,i}: \RR^\Lam(\beta)\rightarrow e(\beta,i)\RR^\Lam(\beta+\alpha_i)e(\beta,i)$. We define $$
\gamma_\beta=\oplus_{i\in I}\gamma_{\beta,i}:\,\RR^\Lam(\beta)\,\rightarrow\,\oplus_{i\in I}e(\beta,i)\RR^\Lam(\beta+\alpha_i)e(\beta,i),
$$
This map was studied in \cite{ZH} and was proved to be injective except in some special cases. It is natural to expect that $\oplus_{i\in I}e(\beta,i)\RR^\Lam(\beta+\alpha_i)e(\beta,i)$ is a free $\RR^\Lam(\beta)$-module when $\gamma_\beta$ is injective. The following example shows that this again fails in general.

\begin{examp}
Let $A$ be of type $A_2$, $\beta=\alpha_1+\alpha_2$ and $\Lam=3\Lam_1+2\Lam_2$. Then $$
\Lam-w_0\Lam=5(\alpha_1+\alpha_2)\neq\beta .
$$
It follows from \cite[Theorem 3.7]{ZH} that $\gamma_\beta$ is injective in this case. However, using Corollary \ref{maincor1}, we can get that $$\begin{aligned}
\dim\,\RR^\Lam(\beta)&=\dim\,e(1,2)\RR^\Lam(\beta)e(1,2)+\dim\,e(1,2)\RR^\Lam(\beta)e(2,1)\\
&\qquad +\dim\,e(2,1)\RR^\Lam(\beta)e(1,2)+\dim\,e(2,1)\RR^\Lam(\beta)e(2,1)\\
&=9+6+6+8=29,
\end{aligned}
$$
and $$
\begin{aligned}
&\quad\,\dim\,e(\beta,1)\RR^\Lam(\beta+\alpha_i) e(\beta,1)+\dim\,e(\beta,2)\RR^\Lam(\beta+\alpha_i) e(\beta,2)\\
&=\dim\,e(1,2,1)\RR^\Lam(\beta+\alpha_i) e(1,2,1)+\dim\,e(1,2,1)\RR^\Lam(\beta+\alpha_i) e(2,1,1)\\
&\qquad +\dim\,e(2,1,1)\RR^\Lam(\beta+\alpha_i) e(1,2,1)+\dim\,e(2,1,1)\RR^\Lam(\beta+\alpha_i) e(2,1,1)\\
&\qquad\quad +\dim\,e(1,2,2)\RR^\Lam(\beta+\alpha_i) e(1,2,2)+\dim\,e(1,2,2)\RR^\Lam(\beta+\alpha_i) e(2,1,2)\\
&\qquad\qquad +\dim\,e(2,1,2)\RR^\Lam(\beta+\alpha_i) e(1,2,2)+\dim\,e(2,1,2)\RR^\Lam(\beta+\alpha_i) e(2,1,2)\\
&=36+36+36+48+36+24+24+20=260.
\end{aligned}
$$
Note that $29\nmid 260$. It follows that  $\oplus_{i\in I}e(\beta,i)\RR^\Lam(\beta+\alpha_i)e(\beta,i)$ is not a free right $\RR^\Lam(\beta)$-module.
\end{examp}

The above examples imply that in general one can not construct a basis of the cyclotomic quiver Hecke algebra $\RR^\Lam(\beta)$ inductively via the injection $\gamma_\beta$.

\bigskip

\bigskip

\end{document}